\newtheorem{theorem}{Theorem}
\newtheorem{assumption}[theorem]{Assumption}
\newtheorem{corollary}[theorem]{Corollary}
\newtheorem{definition}[theorem]{Definition}
\newtheorem{example}{Example}
\newtheorem{lemma}[theorem]{Lemma}
\newtheorem{remark}[theorem]{Remark}
\numberwithin{equation}{section}
\numberwithin{example}{section}
\numberwithin{theorem}{section}
\newenvironment{proof}[1][Proof]{\noindent\textbf{#1.} }{\ \rule{0.5em}{0.5em}}
\begin{document}
	
	\title{Exit Time Risk-Sensitive Control for Systems of Cooperative Agents}
	\author{Paul Dupuis\thanks{%
			Research supported in part by AFOSR FA9550-12-1-0399}, Vaios Laschos\thanks{%
			Research supported in part by AFOSR FA9550-12-1-0399}, and Kavita Ramanan%
		\thanks{%
			Research supported in part by AFOSR FA9550-12-1-0399 and PI NSF DMS-171303}}
	\date{\today}
	\maketitle
	
	\begin{abstract}
		We study a sequence of many-agent exit time
		stochastic control problems, parameterized by the number of agents,
		with risk-sensitive cost structure. We identify
		a fully characterizing assumption, under which  each such  control problem
		corresponds to a risk-neutral stochastic control problem with additive cost,
		and sequentially
		to a risk-neutral stochastic control problem on the simplex  that retains only the
		distribution of states of agents, while 
		discarding further specific information about the state of each agent.   
		Under some additional assumptions, we also prove that the 
		sequence of value functions of these stochastic control problems converges to the value function of a
		deterministic control problem, which can be used for the design of nearly
		optimal controls for the original problem, when the number of agents is
		sufficiently large.
	\end{abstract}
	
	\section{Introduction}
	
	\subsection{Motivation and Background}
	\label{subs-mot}
	
	In this paper, we study many-agent exit time stochastic control problems
	with risk-sensitive cost. Each agent occupies a
	state that takes values in a finite set $\mathcal{X}$, and by controlling
	the {transition } rates{\ between states for }each agent, we try to keep the
	system away from a \textquotedblleft ruin\textquotedblright\ set $\mathcal{K}
	$, for as long as possible and with the least cost. We prove, under suitable
	assumptions, that for every finite number $n$ of agents the control problem
	is equivalent to one with an additive cost structure. Moreover, when $%
	\mathcal{K}\subset \mathcal{X}^{n}$ can be identified with a subset of the
	simplex of probability measures $\mathcal{P}(\mathcal{X}),$ in the sense
	that for every permutation $\sigma$ of $\{1, 2, \ldots, n\}$ we have $\sigma  \mathcal{K}=\mathcal{K}$, 
	then we can replace the original problem by one
	on $\mathcal{P}^{n}(\mathcal{X})=\mathcal{P}(\mathcal{X})\cap \frac{1}{n}%
	\mathbb{Z}^{d},$ getting in this way a control problem whose state is the
	empirical measure on the states of the individual{\ agents}. We also study
	the behavior as $n\rightarrow \infty $ of the sequence of suitable
	renormalized value functions, and prove uniform convergence to the value
	function of a deterministic control problem.
	
	We first describe the model without control, which we call the
	\textquotedblleft base\textquotedblright\ or \textquotedblleft
	nominal\textquotedblright\ model. Let $\mathcal{X}=\{e_{1},\dots ,e_{d}\},$
	where $e_{i}$ is the $i$th unit vector in $\mathbb{R}^{d}$. Let also $%
	\boldsymbol{\gamma }=\{\gamma _{xy}\}_{(x,y)\in \mathcal{X}\times \mathcal{X}%
	}$ denote the rates of an ergodic Markov jump process on $\mathcal{X}.$ This
	process has the generator%
	\begin{equation}
	\mathcal{L}_{\boldsymbol{\boldsymbol{\gamma }}}[f](x)=\sum_{y\in \mathcal{X}%
	}\gamma _{xy}\left[ f(y)-f(x)\right],  \label{one agent process}
	\end{equation}%
	for functions $f: \mathcal{X} \mapsto \mathbb{R}$. 
	For $n\in \mathbb{N},$ consider $n$ agents that move independently and stochastically, 
	with each taking values in  ${\mathcal X}=\{e_{1},\dots ,e_{d}\}$. 
	Then the dynamics of these agents can be represented by a stochastic process 
	taking values in $\mathcal{X}^n$.    Let $\boldsymbol{x}^{n}=(x_{1}^{n},\ldots
	,x_{n}^{n})$ denote a generic element of $\mathcal{X}^n$. Also, for 
	$x, y \in {\mathcal X}$, $i \in \{1, \ldots, n\}$, let 
	$\boldsymbol{v}_{xy}:= y-x$
	and let $\boldsymbol{v}%
	_{i,xy}^{n}=(0,\dots ,0,\boldsymbol{v}_{xy},0,\dots ,0)$ be a $d\times n$
	matrix with all columns equal to zero apart from the $i$th column, which is equal to
	the vector $\boldsymbol{v}_{xy}$.
	Also, let  $\mathcal{Z}:= \{(x,y)\in \mathcal{X}\times \mathcal{X}:\gamma_{xy}>0\}$, 
	and define $\mathcal{Z}_{x}:= \{y\in \mathcal{X}:(x,y)\in \mathcal{Z}\}$ 
	to be  the set of allowed transitions from $x$. 
	Then the generator of the state process of the base model takes the form 
	\begin{equation}
	\mathcal{L}_{\boldsymbol{\gamma }}^{n}[f](\boldsymbol{x}^{n})=\sum_{i=1}^{n}%
	\sum_{y\in \mathcal{Z}_{x_{i}^{n}}}\gamma _{x_{i}^{n}y}\left[ f(\boldsymbol{x%
	}^{n}+\boldsymbol{v}_{i,x_{i}^{n}y}^{n})-f(\boldsymbol{x}^{n})\right], 
	\label{many agent process}
	\end{equation}%
	for $f: {\mathcal X}^n \mapsto \mathbb{R}$. Note that  the  span of $\mathcal{Z}$ is  the hyperplane 
	\begin{equation}
	\mathcal{H}:= \left\{ \sum_{(x,y)\in \mathcal{Z}}a_{xy}\boldsymbol{v}%
	_{xy}:a_{xy}>0,\,(x,y)\in \mathcal{Z}\right\} ,  \label{Hdef}
	\end{equation}%
	which, since $\gamma $ is ergodic, coincides with the hyperplane through the
	origin that is parallel to $\mathcal{P}(\mathcal{X})$.
	We claim that the set $\mathcal{H}$ does not change if the $a_{xy}$ are
	allowed to be arbitrary real numbers.  To see why this is true, note that by ergodicity,
	for any two states $(x,y)\in \mathcal{Z}$ there is a sequence of states $x=x_{1},...,x_{j}=x$
	that satisfies  $y=x_{2}$ and the property that $(x_{i},x_{i+1})\in \mathcal{Z}$
	for $i=1,\ldots ,j-1$, and hence, $\sum_{i=1}^{j-1}\boldsymbol{v}%
	_{x_{i}x_{i+1}}=0$. Repeating this for every possible $(x,y)\in \mathcal{Z}$%
	, there are strictly positive integers $b_{xy}$ such that $\sum_{(x,y)\in 
		\mathcal{Z}}b_{xy}\boldsymbol{v}_{xy}=0$, which implies the claim.
	
	Next we introduce the empirical measure process. \ This process is obtained
	by projecting from $\mathcal{X}^{n}$ onto $\mathcal{P}^{n}(\mathcal{X})=%
	\mathcal{P}(\mathcal{X})\cap \frac{1}{n}\mathbb{Z}^{d}\subset \mathcal{P}(%
	\mathcal{X})$, and has the generator 
	\begin{equation}
	\mathcal{M}_{\boldsymbol{\gamma }}^{n}[f](\boldsymbol{m})=n\sum_{(x,y)\in 
		\mathcal{Z}}\gamma _{xy}m_{x}\left[ f\left( \boldsymbol{m}+\frac{1}{n}%
	\boldsymbol{v}_{xy}\right) -f(\boldsymbol{m})\right] .
	\label{mean field process}
	\end{equation}
	
	One can interpret the  base model introduced above as a collection of independent
	agents with each evolving according to the transition rate $\boldsymbol{%
		\gamma }$. This is the \textquotedblleft preferred\textquotedblright\ or
	\textquotedblleft nominal\textquotedblright\ dynamics, and is what would
	occur if no \textquotedblleft outside influence\textquotedblright\ or other
	form of control acts on the agents. \ If a controller should wish to change
	this behavior, then it must pay a cost to do so. We would like to model the
	situation in which limited information about the system state, and in particular
	information relating only to the empirical measure of the states of all
	agents, is used to produce a desired behavior of the group of agents, which
	again will be characterized in terms of their empirical measure. 
	
	To precisely formulate the control problem,  we consider a continuous \textquotedblleft reward\textquotedblright\
	function $R:\mathcal{P}(\mathcal{X})\rightarrow \lbrack 0,\infty )$, where
	we recall 
	\begin{equation*}
	\mathcal{P}(\mathcal{X}):= \left\{ \boldsymbol{m}\in \mathbb{R}^{%
		\mathcal{X}}:m_{x}\geq 0\text{ for all }x\in \mathcal{X}\text{ and}%
	\sum_{x\in \mathcal{X}}m_{x}=1\right\}  \label{probability space}
	\end{equation*}
	is the simplex of probability measures on $\mathcal{X}$. We also have a cost
	function $\boldsymbol{C}=\{C_{xy}:[0,\infty )\rightarrow \lbrack 0,\infty
	]\}_{(x,y)\in \mathcal{Z}}.$ In the controlled setting, the jump rates of
	each agent can be perturbed from $\boldsymbol{\gamma }$ to$\boldsymbol{\ u}$%
	, and we let $\boldsymbol{\chi }^{n}$ denote the corresponding controlled state occupied by the
	collection of agents.   
	If the problem is of interest over the interval $[0,T],$ where $T$ can be a random variable,
	and the initial state is
	$\boldsymbol{x}^{n}=\{x_{i}^{n}\}_{i\leq n}\in 
	\mathcal{X}^{n}$,    then there is a
	collective risk-sensitive cost (paid by the coordinating controller) equal to
	\begin{equation}
	\mathbb{E}_{\boldsymbol{x}^{n}}\left[ \exp \left( \int_{0}^{T}\left(
	\sum_{i=1}^{n}\sum_{y\in \mathcal{Z}_{\chi _{i}^{n}(t)}}\gamma _{\chi
		_{i}^{n}(t)y}C_{\chi _{i}^{n}(t)y}\left( \frac{u_{\chi _{i}^{n}(t)y}(t,i)}{%
		\gamma _{\chi _{i}^{n}(t)y}}\right) -nR(L(\boldsymbol{\chi }^{n}(t)))\right)
	dt\right) \right],   \label{eqdef:agent cost}
	\end{equation}%
	where  for any $\boldsymbol{x}^{n}=\{x_{i}^{n}\}_{i\leq n}\in 
	\mathcal{X}^{n}$, define 
	\begin{equation}
	\label{def-L}
	L(\boldsymbol{x}^{n}):= \sum_{i=1}^{n}\delta _{x_{i}^{n}}.
	\end{equation}%
	Here, the control process $\boldsymbol{u}$ takes values in a space that will
	be defined later, and for a collection of $n|\mathcal{Z}|$ independent
	Poisson random measures (PRM) $\{N_{i,xy}^{1}\}_{1\leq i\leq n,(x,y)\in 
		\mathcal{Z}}$ with intensity measure equal to Lebesgue measure, the
	controlled dynamics are given by 
	\begin{equation}
	\chi _{i}^{n}(t)=x_{i}^{n}+\sum_{(x,y)\in \mathcal{Z}}\boldsymbol{v}%
	_{xy}\int_{(0,t]}\int_{[0,\infty )}1_{\left[ 0,1_{x}(\chi
		_{i}^{n}(s-))u_{xy}(s,i)\right] }(r)N_{i,xy}^{1}(dsdr).
	\label{eqdef:individual_dynamics}
	\end{equation}%
	Thus $\chi _{i}^{n}$ changes from state $x$ to $y$ with rate $u_{xy}$. The
	formulation of the dynamics in terms of a stochastic differential equation
	will be convenient in the analysis to follow.
	
	In this paper we present two results. The first is that, under additional
	assumptions on the cost $\boldsymbol{C}$, for each $n$, the risk-sensitive
	control problem is equivalent to an ordinary control problem  with the cost
	function $\boldsymbol{F}=\{F_{xy}\}_{(x,y)\in \mathcal{Z}}$, where $F_{xy}$
	is defined by
	
	\begin{equation}
	F_{xy}(q):= \sup_{u\in (0,\infty )}G_{xy}(u,q)\hspace{8pt}\text{and}%
	\hspace{8pt}G_{xy}(u,q):= \left[ u\ell \left( \frac{q}{u}\right) -\gamma
	_{xy}C_{xy}\left( \frac{u}{\gamma _{xy}}\right) \right] ,
	\label{eqdef:Fxy,Gxy}
	\end{equation}%
	with  
	\begin{equation}
	\ell (q):= q\log q-q+1, \qquad \text{ for }q\geq 0.  \label{ell}
	\end{equation}%
	Under the additional conditions we do not end up with a \textit{stochastic
		game}, as is typically the case for risk-sensitive control problems, but rather a control problem with additive cost.
	Control problems are often substantially simpler than games, and in
	particular are often more tractable from a computational perspective. The
	second contribution, again under additional assumptions on $\boldsymbol{C}$,
	is that the sequence of value functions, suitably renormalized, converges to
	the value function  \eqref{eqdef:V} of a deterministic control problem. This convergence result is also helpful
	in the construction of near-optimal controls  for a large  $n$-agent system. 
	
	\begin{example} 
		As an example consider the issue of modeling the users of a resource such as
		energy. Here the agents would be households or similar entities. The state
		of an agent indicates their use of the common resource, and this usage
		evolves in a Markovian fashion.  In exchange for a cost paid by the central
		controller to the individual agents, the agents agree to modify their
		behavior based on the current loading of the system. Thus an energy consumer
		would agree to give up control on if or when certain activities requiring energy consumption take place 
		thus altering the evolution of his own state, but will be compensated for
		doing so  by the central controller. The goal of the central controller, and the motivation for paying
		this cost, is to manage the group behavior so as to keep the system, as
		characterized by the empirical measure, in a desired operating region for as
		long as possible and with minimal cost. In this context, the use of risk
		sensitive cost is motivated in part by the resulting properties of
		robustness with respect to model error.
	\end{example}
	
	\begin{remark}  \label{rem-rn}
		If one wishes, it is possible to work with sequences $\boldsymbol{C}%
		^{n},R^{n}$ of cost and reward functions, as long as some type of
		convergence is assumed for when $n$ goes to infinity. The reader that is interested in such a
		generalization can look at a previous version of our paper in \url{http://www.wias-berlin.de/preprint/2407/wias\_preprints\_2407\_20180212.pdf}
	\end{remark}
	
	\subsection{Related literature and remarks}
	
	For ordinary discrete-time and continuous-time stochastic control problems
	(also referred to  as Markov decision processes) 
	\cite{Arapostathis1993, Hernandez-Lerma1999,put,Guo2009,fleson},  one controls a
	random process to optimize an expected cost. The most common objective function that is optimized for continuous-time escape (or ruin) stochastic control problems
	are of the form 
	\begin{equation}
	J_{T}(x_{0},\pi )=\mathbb{E}_{x_{0},\pi }\left[
	\int_{0}^{T}C(X_{t},u_{t})dt+P(X_{T})\right] ,  \label{ctMdC}
	\end{equation}%
	where $C$ is some cost function that depends on the state $x\in X$ and the
	control/action $u\in U,$ and $\pi $ is a policy or strategy that influences the dynamics $\{X_t, t \geq 0\}$, and $P$ is a terminal cost that depends on the final state of the system.  For
	risk-sensitive stochastic control problems one deals with optimality
	criteria of the form 
	\begin{equation}
	J_{T}(x_{0},\pi )=g^{-1}\left( \mathbb{E}_{x_{0},\pi }\left[ g\left(
	\int_{0}^{T}C(X_{t},u_{t})dt+P(X_{T})\right) \right] \right) ,
	\label{rsctMdC}
	\end{equation}%
	where $g$ is a monotone convex/concave function, and $C$ and $P$ are as above.  One motivation behind the
	use of risk-sensitive cost structures is that, depending on the type of
	monotonicity, variation from the average is  more (risk-averting behavior)
		or less (risk-seeking behavior) penalized. One of the most studied cases
	is the entropic risk measure corresponding to $g_{\theta }(x)=e^{\theta x}, \theta\in\mathbb{R}$ 
	(see \cite{Avila-Godoy1998, Chung1987,
		DiMasi2007, Fleming1997, Hernandez-Hernandez1996, Howard1972,
		Jaskiewicz2007, Marcus1997} for discrete time and \cite%
	{Dupuisb,Dupuis1997b,Ghosh} for continuous time). The function $g_{\theta
	}(x)=e^{\theta x}$ is special because it satisfies the property 
	\begin{equation*}
	\frac{1}{\theta }\log \left( \mathbb{E}\left[ \exp \left( \theta X\right) %
	\right] \right) =\tilde{X}+\frac{1}{\theta }\log \left( \mathbb{E}\left[
	e^{\theta (X-\tilde{X})}\right] \right) ,
	\end{equation*}%
	where $X$ is a random variable and $\tilde{X}$ its expectation. This
	property implies that the weight that is given to deviations from the
	expectation depends only on the difference from the expectation and not the
	expectation itself. It can be proved that the exponential is the only
	function that satisfies such a property (see \cite{Whittle1996}).
	Furthermore, exponential integrals have a variational characterization
	involving entropy, which turns out to be convenient from the mathematical point
	of view, and also allows for an explicit analysis of the robust and
	model insensitivity properties of the resulting controls \cite{dupjampet,
		petjamdup}. In our problem $\theta $ is integrated into the choice of cost $C.$
	
	\subsection{Notation}
	
	We now introduce some common notation that will be used throughout the article. 
	For a locally compact Polish space $\mathcal{S}$, the space of positive
	Borel measures on $\mathcal{S}$ is denoted by $\mathcal{M}(\mathcal{S})$.
	We use  $\mathcal{M}_f(\mathcal{S})$ and $\mathcal{M}_c(\mathcal{S})$ to  denote
	the subspaces of $\mathcal{M}(\mathcal{S})$ consisting, respectively, of  finite
	measures, and of  measures that are finite on every compact subset.
	Letting $C_{c}(S)$ denote the space of continuous functions with compact
	support, we equip $\mathcal{M}_{c}(\mathcal{S})$ with the weakest topology
	such that for every $f\in C_{c}(S),$ the function $\nu\rightarrow\int_{S}fd%
	\nu,\, \nu\in M_{c}(S),$ is continuous. Let $\mathcal{B}(\mathcal{S})$ be the
	Borel $\sigma $-algebra on $\mathcal{S}$ and $\mathcal{P}(\mathcal{S})$ the
	set of probability measures on $(\mathcal{S},\mathcal{B}(\mathcal{S}))$.
	Finally, for a second Polish space $\mathcal{S}^{\prime },$ we let 
	\begin{equation}
	\mathcal{F}(\mathcal{S};\mathcal{S}^{\prime })=\{f:\mathcal{S}\rightarrow 
	\mathcal{S}^{\prime }:f\,\text{measurable}\}  \label{measurable}
	\end{equation}%
	denote the space of measurable functions from $\mathcal{S}$ to $\mathcal{S}%
	^{\prime }.$ For the finite set $\mathcal{X}$  and $a > 0$, let 
	\begin{equation}
	\mathcal{P}_{\ast }(\mathcal{X}){=}\left\{\boldsymbol{m}\in \mathcal{P}(%
	\mathcal{X}) : m_{x}>0\text{ for all }x\in \mathcal{X}\right\} \hspace{6pt}%
	\text{and}\hspace{6pt} \mathcal{P}_{a}(\mathcal{X}){=}\left\{\boldsymbol{m}\in 
	\mathcal{P}(\mathcal{X}) :m_{x}\geq a\text{ for all }x\in \mathcal{X}%
	\right\} .  \label{eqn:pa}
	\end{equation}
	
	For a set $K\subset \mathcal{P}(\mathcal{X})$, the closure $\bar{K},$ the
	complement $K^{c}\,$ and the interior $K^{\circ }, $ will be considered with
	respect to the restriction of the Euclidean topology on the set $\mathcal{P}(%
	\mathcal{X}).$ Let $\mathcal{D}([0,\infty );\mathcal{S})$ denote the space of
	c\`{a}dl\`{a}g functions on $\mathcal{S}$, equipped with the Skorohod topology (see 
	\cite[Section 16]{MR1324786}), i.e., the Skorohod space. This space is
	separable and complete \cite[Theorem 16.3]{MR1324786}, and a set is
	relatively compact in $\mathcal{D}([0,\infty );\mathcal{S}),$ if and only if
	for every $M<\infty $, its natural projection on $\mathcal{D}([0,M];\mathcal{%
		S}),$ is relatively compact \cite[Theorem 16.4]{MR1324786}.
	
	For $\bar{\mathcal{M}}=\mathcal{M}_{c}([0,\infty )^{2})$, let $\mathbb{P}$
	be the probability measure on $(\bar{\mathcal{M}},\mathcal{B}(\bar{\mathcal{M%
	}})),$ under which the canonical map $N(\omega )=\omega $ is a Poisson
	measure with intensity measure equal to Lebesgue measure on $[0,\infty )^{2}$%
	. Let $\mathcal{G}_{t}=\sigma \{N((0,s]\times A):0\leq s\leq t,A\in \mathcal{%
		B}([0,\infty ))\},$ and let $\mathcal{F}_{t}$ be the completion of $\mathcal{%
		G}_{t}$ under $\mathbb{P}.$ Let $\mathcal{P}$ be the corresponding
	predictable $\sigma $-field in $[0,\infty )\times \bar{\mathcal{M}}.$
	For natural numbers $k,k^{\prime }$, we similarly define a measure 
	$\mathbb{P}^{k,k^{\prime }}$ on $(\bar{\mathcal{M}}^{k^{\prime }},\mathcal{B}%
	(\bar{\mathcal{M}}^{k^{\prime }}))$ under which the maps $N^{k}_{i}(\boldsymbol{%
		\omega })=\omega _{i},\,1\leq i\leq k^{\prime },$ are independent Poisson
	measures with intensity measure equal to $k$ times the Lebesgue measure on $%
	[0,\infty )^{2}$. $\{\mathcal{G}_{t}^{k,k^{\prime }}\},\{\mathcal{F}%
	_{t}^{k,k^{\prime }}\},$ and $\mathcal{P}^{k,k^{\prime }}$ are defined
	analogously. Let $\mathcal{A}$ be the class of $\mathcal{P}\setminus 
	\mathcal{B}([0,\infty ))$ measurable maps $\phi :[0,\infty )\times \bar{\mathcal{M}}\rightarrow \lbrack 0,\infty )$, and $\mathcal{A}_{b}$ the
	subset of these maps that are uniformly bounded from below away from
	zero and above by a positive constant. Similarly we define $\mathcal{A}%
	^{k,k^{\prime }}$ to be the set of $\mathcal{P}^{k,k^{\prime }}\setminus 
	\mathcal{B}([0,\infty )^{k^{\prime }})$ measurable maps $\boldsymbol{\phi }%
	:[0,\infty )\times \bar{\mathcal{M}}^{k^{\prime }}\rightarrow \lbrack
	0,\infty )^{k^{\prime }}$, and $\mathcal{A}_{b}^{k,k^{\prime }}$ the subset
	of these  maps for which each component is uniformly bounded from below and
	above by strictly positive constants.

	\section{Model Description}
	
	Throughout this section, fix $n \in \mathbb{N}$, and let $C$ and $R$ be, respectively, the cost and reward functions introduced in Section \ref{subs-mot}.   
	
	\subsection{The many-agent\ control problem} 
	
	For a subset $\mathcal{K}$ of $\mathcal{X}^{n}$, we define a risk-sensitive
	cost $\mathcal{I}_{\mathcal{K}}^{n}:\mathcal{X}^{n}\times \mathcal{A}%
	_{b}^{1,n|\mathcal{Z}|}\rightarrow \lbrack 0,\infty ]$ that corresponds to the 
	cost/reward up to the first time of hitting  $\mathcal{K}$  as follows: 
	\begin{equation}
	\mathcal{I}_{\mathcal{K}}^{n}(\boldsymbol{x}^{n},\boldsymbol{u}):= 
	\mathbb{E}_{\boldsymbol{x}^{n}}\left[ \exp\left(\int_{0}^{T_{\mathcal{K}%
	}}\left( \sum_{i=1}^{n}\sum_{y\in \mathcal{Z}_{\chi _{i}^{n}(t)}}\gamma
	_{\chi _{i}^{n}(t)y}C_{\chi _{i}^{n}(t)y}\left( \frac{u_{\chi
			_{i}^{n}(t)y}(t,i)}{\gamma _{\chi _{i}^{n}(t)y}}\right) -nR(L(\boldsymbol{%
		\chi }^{n}(t)))\right) dt\right)\right] ,  \label{eqdef:JWscr}
	\end{equation}%
	where 
	$\mathbb{E}_{\boldsymbol{x}^{n}}$ denotes the expected value given $%
	\boldsymbol{\chi }^{n}(0)=\boldsymbol{x}^{n}$, 
	$\{ \boldsymbol{\chi}^{n} (t), t \geq 0\}$ follows   the dynamics  given in 
	(\ref{eqdef:individual_dynamics}), and $T_{\mathcal{K}}$ is the hitting time  
	\begin{equation}
	T_{\mathcal{K}}:= \inf \left\{ t\in \lbrack 0,\infty ]:\boldsymbol{\chi }%
	^{n}(t)\in \mathcal{K}\right\} .  \label{eqdef:stop_time}
	\end{equation}%
	We define the value function $\mathcal{W}_{\mathcal{K}}^{n}:\mathcal{X}%
	^{n}\rightarrow \lbrack 0,\infty ]$ by
	
	\begin{equation}
	\mathcal{W}_{\mathcal{K}}^{n}(\boldsymbol{x}^{n}):= \inf_{\boldsymbol{u}%
		\in \mathcal{A}_{b}^{1,n|\mathcal{Z}|}}\mathcal{I}_{\mathcal{K}}^{n}(%
	\boldsymbol{x}^{n},\boldsymbol{u}).  \label{eqdef:Wscr}
	\end{equation}
	
	Similarly, for a set $\mathcal{K}\subset \mathcal{X}^{n}$ we define the
	ordinary cost $\mathcal{J}_{\mathcal{K}}^{n}:\mathcal{X}^{n}\times \mathcal{A%
	}_{b}^{1,n|\mathcal{Z}|}\rightarrow \lbrack 0,\infty ]$ and corresponding
	value function $\mathcal{V}_{\mathcal{K}}^{n}:\mathcal{X}^{n}\rightarrow
	\lbrack 0,\infty ]$ by 
	\begin{equation}
	\mathcal{J}_{\mathcal{K}}^{n}(\boldsymbol{x}^{n},\boldsymbol{q}):= 
	\mathbb{E}_{\boldsymbol{x}^{n}}\left[ \int_{0}^{T_{\mathcal{K}}}\left( \frac{%
		1}{n}\sum_{i=1}^{n}\sum_{y\in \mathcal{Z}_{\chi _{i}^{n}(t)}}F_{\chi
		_{i}^{n}(t)y}(q_{\chi _{i}^{n}(t)y}(t,i))+R(L(\boldsymbol{\chi }%
	^{n}(t)))\right) dt\right] ,  \label{eqdef:JVncal}
	\end{equation}%
	where $\boldsymbol{F} = \{F_{xy}\}_{(x, y) \in {\mathcal Z}\}}$ is  defined in
	(\ref{eqdef:Fxy,Gxy}), and  
	\begin{equation}
	\mathcal{V}_{\mathcal{K}}^{n}(\boldsymbol{x}^{n}):= \inf_{\boldsymbol{q}%
		\in \mathcal{A}_{b}^{1,n|\mathcal{Z}|}}\mathcal{J}_{\mathcal{K}}^{n}(%
	\boldsymbol{x}^{n},\boldsymbol{q}),  \label{eqdef:Vncal}
	\end{equation}%
	where the dynamics of $ \{ \boldsymbol{\chi}^{n} (t), t \geq 0\}$  are now given by (\ref{eqdef:individual_dynamics}) with $%
	\boldsymbol{u}$ replaced by $\boldsymbol{q}$, and the stopping time $T_{\mathcal{K}}$ is, as earlier,  given by  (\ref{eqdef:stop_time}). We remark that the reason for two different notations
	for controls is to aid the reader, by associating one with the risk
	sensitive problem and one with the regular control problem. Moreover, there
	are occasions that both variables appear at the same time, as in the
	definition of $\boldsymbol{F}$ or that of the Hamiltonian. Specific
	conditions on the cost functions will be given in Section \ref{cost function}%
	, and properties of $\boldsymbol{F}$ will be proved in Lemma \ref%
	{lemma:fnproperties}.   Note that for the many agent systems there are $n|%
	\mathcal{Z}|$ PRMs, each with intensity $1$

	\subsection{The mean-field control problems}
	
	Suppose that we have some exchangeability in the sense that for every
	permutation $\sigma$  of $\{1, 2, \ldots, n\}$,  $\sigma \mathcal{K}=\mathcal{K}.$
	Then  $\mathcal{K}$ can be identified with the subset
	\[   K := \{ L(x^n): x^n \in {\mathcal K}\},  \] 
	of the simplex of
	probability measures $\mathcal{P}(\mathcal{X})$.  Here, $L$ is as defined in \eqref{def-L}.  Then we can replace a control problem on $\mathcal{X}^{n}$ by one on $\mathcal{P}(\mathcal{X}).$ In this case $\mathcal{W}_{\mathcal{K}}^{n}$ and $%
	\mathcal{V}_{\mathcal{K}}^{n}$ can be considered as functions on $\mathcal{P}%
	^{n}(\mathcal{X})$, in the sense that we can find $W_{K}^{n},V_{K}^{n}:%
	\mathcal{P}^{n}(\mathcal{X})\rightarrow \lbrack 0,\infty ],$ such that $%
	\mathcal{W}_{\mathcal{K}}^{n}(\boldsymbol{x}^{n})=W_{K}^{n}(L(\boldsymbol{x}%
	^{n}))$ and $\mathcal{V}_{\mathcal{K}}^{n}(\boldsymbol{x}^{n})=V_{K}^{n}(L(%
	\boldsymbol{x}^{n})),$ where $L$ is as defined in \eqref{def-L}.  To see this, pick a starting point $\boldsymbol{x}%
	^{n}\in \mathcal{X}^{n}$ and some permutation $\sigma $. Then for any
	admissible control $\boldsymbol{u},$ the total cost generated starting at $%
	\boldsymbol{x}^{n}$ is the same as that generated when starting from $\boldsymbol{x}_{\sigma
	}^{n}$ and picking $\boldsymbol{u}_{\sigma }$ as the control. Therefore, for
	every $\boldsymbol{x}^{n}\in \mathcal{X}^{n},\sigma \in \mathbb{S}_{n},$ we
	have $\mathcal{V}_{\mathcal{K}}(\boldsymbol{x}^{n})=\mathcal{V}_{\mathcal{K}%
	}(\boldsymbol{x}_{\sigma }^{n}).$
	
	Define $h^{n}:\mathcal{D}([0,\infty );\mathcal{P}^{n}(\mathcal{X}))\times 
	\mathcal{A}_{b}^{n,|\mathcal{Z}|}\times \mathcal{P}^{n}(\mathcal{X})\times 
	\bar{\mathcal{M}}^{n,|\mathcal{Z}|}\rightarrow \mathcal{D}([0,\infty );%
	\mathbb{R}^{d})$ by 
	\begin{equation*}
	h^{n}\left( \boldsymbol{\mu },\boldsymbol{u},\boldsymbol{m},\frac{1}{n}%
	\boldsymbol{N}^{n}\right) (t):= \boldsymbol{m}+\sum_{(x,y)\in \mathcal{Z}%
	}\boldsymbol{v}_{xy}\int_{(0,t]}\int_{[0,\infty )}1_{[0,\mu
		_{x}(-s)u_{xy}(s)]}(r)\frac{1}{n}N_{xy}^{n}(dsdr).
	\end{equation*}%
	Since $\boldsymbol{u}\in \mathcal{A}_{b}^{n,|\mathcal{Z}|}$ implies the
	rates $u_{xy}(s)$ are uniformly bounded, one can explicitly construct a
	unique $\mathcal{D}([0,\infty );\mathcal{P}^{n}(\mathcal{X}))$-valued
	process that satisfies 
	\begin{equation}
	\boldsymbol{\mu }=h^{n}\left( \boldsymbol{\mu },\boldsymbol{u},\boldsymbol{m}%
	,\frac{1}{n}\boldsymbol{N}^{n}\right) .  \label{eqdef:calTn}
	\end{equation}%
	\cite{Dupuis2016}. Here $\boldsymbol{\mu }$ is the controlled process, $%
	\boldsymbol{u}$ is the control, $\boldsymbol{m}$ is an initial condition,
	and $\boldsymbol{N}^{n}/n$ is scaled noise.
	
	Now with $T_{K}:= \inf \left\{ t\in \lbrack 0,\infty ]:\boldsymbol{\mu }%
	(t)\in K\right\} $, the functions $I_{K}^{n},J_{V}^{n}:\mathcal{P}^{n}(%
	\mathcal{X})\times \mathcal{A}_{b}^{n,|\mathcal{Z}|}\rightarrow \lbrack
	0,\infty ]$ and $W_{K}^{n},V_{K}^{n}:\mathcal{P}^{n}(\mathcal{X})\rightarrow
	\lbrack 0,\infty ]$ are given by
	
	\begin{equation}
	W_{K}^{n}(\boldsymbol{m}):= \inf_{\boldsymbol{u}\in \mathcal{A}_{b}^{n,|%
			\mathcal{Z}|}}I_{K}^{n}(\boldsymbol{m},\boldsymbol{u}),  \label{eqdef:Wn}
	\end{equation}
	where 
	\begin{equation}
	I_{K}^{n}(\boldsymbol{m},\boldsymbol{u}):= \mathbb{E}_{\boldsymbol{m}}%
	\left[ e^{n\int_{0}^{T_{K}}\left( \sum_{(x,y)\in \mathcal{Z}}\mu
		_{x}(t)\gamma _{xy}C_{xy}\left( \frac{u_{xy}(t)}{\gamma _{xy}}\right) -R(%
		\boldsymbol{\mu }(t))\right) dt}:\boldsymbol{\mu }=h^{n}\left( \boldsymbol{%
		\mu },\boldsymbol{u},\boldsymbol{m},\frac{1}{n}\boldsymbol{N}^{n}\right) %
	\right] ,  \label{eqdef:JWn}
	\end{equation}%
	and 
	\begin{equation}
	V_{K}^{n}(\boldsymbol{m}):= \inf_{\boldsymbol{q}\in \mathcal{A}_{b}^{n,|%
			\mathcal{Z}|}}J_{K}^{n}(\boldsymbol{m},\boldsymbol{q}),  \label{eqdef:Vn}
	\end{equation}
	where 
	\begin{equation}
	J_{K}^{n}(\boldsymbol{m},\boldsymbol{q}):= \mathbb{E}_{\boldsymbol{m}}%
	\left[ \int_{0}^{T_{K}}\left( \sum_{(x,y)\in \mathcal{Z}}\mu
	_{x}(t)F_{xy}(q_{xy}(t))+R(\boldsymbol{\mu }(t))\right) dt:\boldsymbol{\mu }%
	=h^{n}\left( \boldsymbol{\mu },\boldsymbol{q},\boldsymbol{m},\frac{1}{n}%
	\boldsymbol{N}^{n}\right) \right] .  \label{eqdef:JVn}
	\end{equation}%
	For these control problems, there are $|\mathcal{Z}|$ PRMs, each with
	intensity $n$.  In contrast, recall from the discussion prior to
	\eqref{eqdef:individual_dynamics}  that the $n$-agent system dynamics are driven by $n|\mathcal{Z}|$ PRMS, each with intensity $1$.

	\section{Equivalence of the control problems}
	
	In this section we prove that, after a natural renormalization, the value
	function $\mathcal{W}_{\mathcal{K}}^{n}$ defined in (\ref{eqdef:Wscr}) is
	linked to $\mathcal{V}_{\mathcal{K}}^{n}$ defined in (\ref{eqdef:Vncal})
	which, as noted before,  is the value function of an ordinary stochastic
	control problem with a new cost function. Specifically, we show that $-\log (%
	\mathcal{W}_{K}^{n})/n$ equals $\mathcal{V}_{K}^{n}$, and that the many
	agent and the mean field control problem are equivalent  when the exchangeability condition holds:  
	\begin{equation}
	-\frac{1}{n}\log (W_{K}^{n}(L(\boldsymbol{x}^{n})))=V_{K}^{n}(L(\boldsymbol{x%
	}^{n}))= \mathcal{V}_{\mathcal{K}}^{n}(\boldsymbol{x}^{n})=-\frac{1}{n}\log (%
	\mathcal{W}_{\mathcal{K}}^{n}(\boldsymbol{x}^{n})).  \label{bigequality}
	\end{equation}
	
	\subsection{The cost function}
	
	\label{cost function}
	
	One of the aims of this paper is to identify cost structures that make sense
	for the problem formulation and for which the risk-sensitive problem is
	equivalent to a control problem (rather than a game). The only place where
	restrictions are needed are in the cost $\boldsymbol{C}$ paid by the
	centralized controller to the agents for deviating from the nominal rates $%
	\boldsymbol{\gamma }$. To see what conditions will be needed, we first
	discuss briefly the strategy to be used for the proof of \eqref{bigequality}%
	. The proof will use a related Bellman equation. Let $H:\mathcal{P}(\mathcal{%
		X})\times \mathbb{R}^{|\mathcal{Z}|}\rightarrow \mathbb{R}$ be given by 
	\begin{equation}
	H(\boldsymbol{m},\boldsymbol{\xi }):= \inf_{\boldsymbol{q}\in \lbrack
		0,\infty )^{|\mathcal{Z}|}}\left\{ \sum_{(x,y)\in \mathcal{Z}}m_{x}\left(
	q_{xy}\xi _{xy}+F_{xy}(q_{xy})\right) \right\} ,  \label{eqn:defofHn}
	\end{equation}%
	where 
	\begin{equation}
	F_{xy}(q):= \sup_{u\in (0,\infty )}G_{xy}(u,q)\hspace{8pt}\text{ and}%
	\hspace{8pt}G_{xy}(u,q):= \left[ u\ell \left( \frac{q}{u}\right) -\gamma
	_{xy}C_{xy}\left( \frac{u}{\gamma _{xy}}\right) \right] .  \label{def:F}
	\end{equation}%
	Consider the equation  
	\begin{equation}
	H\left( \boldsymbol{m},\Delta ^{n}V(\boldsymbol{m})\right) +R(\boldsymbol{m}%
	)=0\hspace{4pt}\text{in}\hspace{4pt}\mathcal{P}^{n}(\mathcal{X})\setminus K,
	\label{prelimeq}
	\end{equation}%
	where  $\Delta ^{n}V(\boldsymbol{m})$
	denotes the $|\mathcal{Z}|$-dimensional vector $n\left( V(\boldsymbol{m}+%
	\frac{\boldsymbol{v}_{xy}}{n})-V(\boldsymbol{m})\right) $, and $\Delta
	_{xy}^{n}V(\boldsymbol{m})$ is the component $n\left( V(\boldsymbol{m}+\frac{%
		\boldsymbol{v}_{xy}}{n})-V(\boldsymbol{m})\right) _{xy}$,
	$(x,y)\in \mathcal{Z}$.   
	We will show that $V_{K}^{n}$ is the unique solution $V$ to
	\eqref{prelimeq} that satisfies the  boundary condition
	$V(\boldsymbol{m})=0$ for $\boldsymbol{m}\in K$.
	We will also prove that $W_{K}^{n}$ is the unique solution to the equation 
	\begin{equation}
	\sup_{\boldsymbol{\ u}\in (0,\infty )^{|\mathcal{Z}|}}\left\{ \sum_{(x,y)\in 
		\mathcal{Z}}m_{x}\left( u_{xy}\left( \frac{W(\boldsymbol{m})-W\left( 
		\boldsymbol{m}+\frac{\boldsymbol{v}_{xy}}{n}\right) }{W(\boldsymbol{m})}%
	\right) -\gamma _{xy}C_{xy}\left( \frac{u_{xy}}{\gamma _{xy}}\right) \right)
	\right\} =-R(\boldsymbol{m})  \label{eqdef:Weq}
	\end{equation}%
	for $\boldsymbol{m}\in \mathcal{P}^{n}(\mathcal{X})\setminus K$ with
	boundary condition $W(\boldsymbol{m})=1$ for $\boldsymbol{m}\in K$.
	
	In the proof of the relation  $-\frac{1}{n} \log (W_{K}^{n})=V_{K}^{n}$,
	we will use the following lemma,  which holds under suitable conditions on
	the cost functional specified in Assumption \ref{assumption} below. The proof  of the lemma is given in  Section \ref{subs-equiv} (right after Lemma \ref{lemma-n}).

	\begin{lemma}
		\label{iff}  Suppose Assumption \ref{assumption} below holds. 
		Then, if  $\tilde{V}:\mathcal{P}^{n}(\mathcal{X})\rightarrow \lbrack
		0,\infty )$ is a solution to (\ref{prelimeq}) and $\tilde{V}(\boldsymbol{m}%
		)=0$ for $\boldsymbol{m}\in K$, then $\tilde{W}=e^{-n\tilde{V}}:\mathcal{P}%
		^{n}(\mathcal{X})\rightarrow (0,\infty )$ is a solution of \eqref{eqdef:Weq}
		and $\tilde{W}(\boldsymbol{m})=1$ for $\boldsymbol{m}\in K.$
	\end{lemma}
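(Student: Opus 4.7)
The plan is to carry out a pointwise algebraic translation that converts the Bellman equation (\ref{prelimeq}) satisfied by $\tilde V$ into the equation (\ref{eqdef:Weq}) for $\tilde W:=e^{-n\tilde V}$. The boundary condition is trivial: $\tilde V(\boldsymbol m)=0$ for $\boldsymbol m\in K$ gives $\tilde W(\boldsymbol m)=1$, and $\tilde V\geq 0$ gives $\tilde W\in(0,\infty)$. Writing $\xi_{xy}:=\Delta^n_{xy}\tilde V(\boldsymbol m)=n(\tilde V(\boldsymbol m+\boldsymbol v_{xy}/n)-\tilde V(\boldsymbol m))$, a direct calculation yields
\begin{equation*}
\frac{\tilde W(\boldsymbol m)-\tilde W(\boldsymbol m+\boldsymbol v_{xy}/n)}{\tilde W(\boldsymbol m)}=1-e^{-\xi_{xy}},
\end{equation*}
so the left-hand side of (\ref{eqdef:Weq}) at $\tilde W$ equals $\sum_{(x,y)\in\mathcal Z}m_x\sup_{u>0}[u(1-e^{-\xi_{xy}})-\gamma_{xy}C_{xy}(u/\gamma_{xy})]$, while $H(\boldsymbol m,\boldsymbol\xi)$ decouples into $\sum_{(x,y)\in\mathcal Z}m_x\inf_{q\geq 0}[q\xi_{xy}+F_{xy}(q)]$. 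Hence the lemma reduces to the pointwise identity
\begin{equation}\label{eq:minimax}
\inf_{q\geq 0}\bigl[q\xi+F_{xy}(q)\bigr]=\sup_{u>0}\bigl[u(1-e^{-\xi})-\gamma_{xy}C_{xy}(u/\gamma_{xy})\bigr]\qquad(\xi\in\mathbb R).
\end{equation}

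Recalling $F_{xy}(q)=\sup_{u>0}[u\ell(q/u)-\gamma_{xy}C_{xy}(u/\gamma_{xy})]$, the left side of (\ref{eq:minimax}) is $\inf_q\sup_u h_{xy}(q,u)$ and the right side is $\sup_u\inf_q h_{xy}(q,u)$ for $h_{xy}(q,u):=q\xi+u\ell(q/u)-\gamma_{xy}C_{xy}(u/\gamma_{xy})$. The easy half $\inf\sup\geq\sup\inf$ is automatic; the inner infimum is explicit because the Legendre dual of $\ell$ is $\ell^*(\xi)=e^{\xi}-1$, giving $\inf_{q\geq 0}[q\xi+u\ell(q/u)]=u(1-e^{-\xi})$, attained at $q^*=ue^{-\xi}$. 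Thus the content of (\ref{eq:minimax}) is the reverse inequality, i.e. a minimax interchange for $h_{xy}$.

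The main obstacle is this interchange, since $u\ell(q/u)$ is the perspective of a convex function and is therefore convex (not concave) in $u$, so Sion's theorem does not apply directly to $h_{xy}(q,u)$. I expect Assumption \ref{assumption} to be tailored precisely to circumvent this: it should either impose a structural form on $\boldsymbol C$ (so that $u\mapsto u\ell(q/u)-\gamma_{xy}C_{xy}(u/\gamma_{xy})$ becomes concave, or admits a common saddle point $(q^*,u^*)$ with $q^*=u^*e^{-\xi}$), or make a growth/smoothness assumption ensuring the supremum in the definition of $F_{xy}$ is attained at some $u^*(q)$ with $q=u^*(q)e^{-\xi}$ at the inf-minimizer. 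Given any of these, a standard saddle-point argument yields (\ref{eq:minimax}).

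Once (\ref{eq:minimax}) is established, the assembly is routine: multiplying by $m_x$, summing over $(x,y)\in\mathcal Z$, and recombining the (independent) $(x,y)$-wise suprema into a joint supremum over $\boldsymbol u\in(0,\infty)^{|\mathcal Z|}$ shows that (\ref{prelimeq}) evaluated at $\tilde V$, namely $H(\boldsymbol m,\Delta^n\tilde V(\boldsymbol m))=-R(\boldsymbol m)$, is equivalent to (\ref{eqdef:Weq}) evaluated at $\tilde W$, completing the proof.
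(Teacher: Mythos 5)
Your reduction is correct and matches the paper's strategy: translate (\ref{prelimeq}) into (\ref{eqdef:Weq}) via $\tilde W = e^{-n\tilde V}$, compute the inner infimum using $\ell^*(\xi) = e^\xi - 1$ to get $\inf_{q\ge 0}[q\xi + u\ell(q/u)] = u(1-e^{-\xi})$ at $q^* = ue^{-\xi}$, and observe that the whole lemma reduces to the pointwise minimax interchange (\ref{eq:minimax}). That is exactly how the paper proceeds, with the interchange packaged as Corollary~\ref{cor:isaac}.

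The gap is that you leave the minimax interchange itself as a speculation about what Assumption~\ref{assumption} ``should'' provide, and your guesses about the mechanism are slightly off. You suggest Assumption~\ref{assumption} might force $u\mapsto u\ell(q/u) - \gamma_{xy}C_{xy}(u/\gamma_{xy})$ to be concave, or yield a common saddle point; in fact it only yields \emph{quasi}-concavity. Assumption~\ref{assumption}(1) requires $u C_{xy}'(u) - u$ to be increasing, which forces $\partial_u L_{xy}(u,q) = -q/u + 1 - C_{xy}'(u/\gamma_{xy})$ to change sign at most once (its root set equals $\{u: uC_{xy}'(u/\gamma_{xy}) - u + q = 0\}$, a single point or interval), so $u\mapsto L_{xy}(u,q)$ is quasi-concave. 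Since quasi-concavity/quasi-convexity is all that Sion's minimax theorem requires, the paper's Lemma~\ref{lemma:isaac} then applies Sion (Corollary 3.3 of \cite{Sion1958}) after a compactness reduction to establish (\ref{eq:minimax}); your remark that ``Sion's theorem does not apply directly'' is therefore misleading---it does apply once quasi-concavity is in place. Without actually verifying this, the key step in your argument is missing, so the proof is incomplete as written.
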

	
	We now provide an  outline of the proof of Lemma \ref{iff} and also provide motivation for 
	the form of our main assumption, Assumption \ref{assumption} below, on the cost function. 
	First, note that by \eqref{eqn:defofHn}-\eqref{def:F}, we have
	\begin{equation}
	\begin{split}
	H(\mathbf{m},\boldsymbol{\xi })& := \inf_{\boldsymbol{q}\in \lbrack
		0,\infty )^{|\mathcal{Z}|}}\left\{ \sum_{(x,y)\in \mathcal{Z}}m_{x}\left(
	q_{xy}\xi _{xy}+F_{xy}(q_{xy})\right) \right\} \\
	& =\inf_{\boldsymbol{q}\in \lbrack 0,\infty )^{|\mathcal{Z}|}}\sup_{%
		\boldsymbol{\ u}\in (0,\infty )^{|\mathcal{Z}|}}\left\{ \sum_{(x,y)\in 
		\mathcal{Z}}m_{x}\left( q_{xy}\xi _{xy}+G_{xy}(u_{xy},q_{xy})\right) \right\}
	\\
	& =  \inf_{\boldsymbol{q}\in \lbrack 0,\infty )^{|\mathcal{Z}|}}\sup_{%
		\boldsymbol{\ u}\in (0,\infty )^{|\mathcal{Z}|}}\left\{ \sum_{(x,y)\in 
		\mathcal{Z}}m_{x}   L_{xy} (u_{xy}, q_{xy}) \right\}, 
	\end{split}
	\label{hamiltonian0}
	\end{equation}
	where, $L_{xy}$ is defined, in terms of $\xi_{xy}$,  $\gamma_{xy}$, $C_{xy}$, $L_{xy}$ and
	the function $\ell$ defined in \eqref{ell}, as 
	\begin{equation*}
	L_{xy}(u,q) :=q \xi_{xy} +u\ell \left( \frac{q}{u}\right) -\gamma_{xy}C_{xy}\left( 
	\frac{u}{\gamma_{xy}}\right). 
	\end{equation*}
	The proof of Lemma \ref{iff} will proceed by first showing that Isaac's condition holds, that is,  that the supremum and infimum in \eqref{hamiltonian0} can be exchanged: 
	\begin{equation}
	\begin{split}
	\inf_{\boldsymbol{q}\in \lbrack 0,\infty )^{|\mathcal{Z}|}}\sup_{%
		\boldsymbol{\ u}\in (0,\infty )^{|\mathcal{Z}|}}\left\{ \sum_{(x,y)\in 
		\mathcal{Z}}m_{x}  L_{xy} (u_{xy}, q_{xy}) \right\} &= 
	\sup_{\boldsymbol{\ u}\in (0,\infty )^{|\mathcal{Z}|}}
	\inf_{\boldsymbol{q}\in \lbrack 0,\infty )^{|\mathcal{Z}|}} 
	\left\{ \sum_{(x,y)\in 
		\mathcal{Z}}m_{x}  L_{xy} (u_{xy}, q_{xy}) \right\}. 
	\end{split}
	\label{hamiltonian}
	\end{equation}

	
	Equation \eqref{hamiltonian} is clearly equivalent to 
	\begin{equation}
	\label{toshow}
 \inf_{q_{xy} \in [0,\infty)}  \sup_{u_{xy} \in (0,\infty )} L_{xy} (u_{xy}, q_{xy})  =
	\sup_{u_{xy} \in (0,\infty )} \inf_{q_{xy} \in [0,\infty)}  L_{xy} (u_{xy}, q_{xy}),\hspace{16pt} \forall	(x,y) \in {\mathcal Z}.
	\end{equation}

	We show in Lemma \ref{lemma:isaac} below  that  \eqref{toshow} holds under  
	the following main assumption on the cost function.

	\begin{assumption}
		\label{assumption} $R:\mathcal{P}(\mathcal{X})\rightarrow \lbrack 0,\infty )$
		is a continuous function. Moreover, for every $(x,y)\in \mathcal{Z},$ $%
		C_{xy}:[0,\infty )\rightarrow \lbrack 0,\infty ]$ is a convex function that
		satisfies the following:
		
		\begin{enumerate}
			\item \label{enumerate:convex}$uC_{xy}^{\prime }\left( u\right) -u\hspace{4pt}$ is increasing on the maximal open interval where $C_{xy}$ is finite; 
			
			\item \label{enumerate:nochange} $C_{xy}(1)=0.$
		\end{enumerate}
	\end{assumption}
	
	The following result, which is proved in Appendix \ref{ap:a},
shows that  part \ref{enumerate:convex} of Assumption \ref{assumption} 
is close to being necessary for \eqref{hamiltonian} to hold.

\begin{theorem}
	\label{minmax} 
	If \eqref{hamiltonian} is satisfied and for each
	$(x,y) \in {\mathcal Z}$, $C_{xy}$ is twice differentiable on some non-empty 
	interval $(u_{1,xy}, u_{2,xy})$, then part \ref{enumerate:convex} of Assumption \ref{assumption}
	is satisfied on that interval. 
\end{theorem}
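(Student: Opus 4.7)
The plan is to re-express the pointwise Isaacs identity \eqref{toshow} as a biconjugacy statement about a one-variable auxiliary function built from $C_{xy}$, and then to translate that biconjugacy, via a direct second-derivative computation, into the monotonicity of $uC_{xy}'(u)-u$.

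Fix $(x,y)\in\mathcal{Z}$ and abbreviate $C:=C_{xy}$, $\gamma:=\gamma_{xy}$, $\xi:=\xi_{xy}$, and $\phi(v):=vC'(v)-v$. For each $u>0$, the map $q\mapsto L_{xy}(u,q)$ is strictly convex on $(0,\infty)$ with unique minimizer $q^{*}(u)=ue^{-\xi}$, which gives
\[
\sup_{u>0}\inf_{q\geq 0}L_{xy}(u,q)=\sup_{u>0}\bigl[u(1-e^{-\xi})-\gamma C(u/\gamma)\bigr]=\gamma C^{*}(1-e^{-\xi}),
\]
where $C^{*}$ is the Legendre conjugate of $C$. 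On the other hand $\sup_{u>0}L_{xy}(u,q)=q\xi+F_{xy}(q)$ by definition, so $\inf_q\sup_uL_{xy}(u,q)=-F_{xy}^{*}(-\xi)$. Using the Fenchel identity $\ell^{*}(\eta)=e^{\eta}-1$ and the resulting representation $u\ell(q/u)=\sup_\eta[q\eta-u(e^{\eta}-1)]$, swapping the two suprema in the definition of $F_{xy}$ produces
\[
F_{xy}(q)=\sup_{\eta\in\mathbb{R}}\bigl[q\eta+\gamma C^{*}(1-e^{\eta})\bigr]=g^{*}(q),\qquad g(\eta):=-\gamma C^{*}(1-e^{\eta}),
\]
and therefore $F_{xy}^{*}=g^{**}$. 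The hypothesis \eqref{toshow} at every $\xi\in\mathbb{R}$ thus becomes $g=g^{**}$ on $\mathbb{R}$, which by the Fenchel--Moreau theorem is equivalent to $g$ being convex, lower semicontinuous, and proper.

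It remains to extract the monotonicity of $\phi$ on $I:=(u_{1,xy},u_{2,xy})$ from convexity of $g$. At any $v\in I$ with $C''(v)>0$ and $C'(v)<1$, set $p:=C'(v)$ and $\eta:=\log(1-p)$, so that $1-e^{\eta}=p$, $(C^{*})'(p)=v$, and $(C^{*})''(p)=1/C''(v)$. A direct chain-rule calculation then gives
\[
g''(\eta)=\frac{\gamma e^{\eta}}{C''(v)}\bigl(C'(v)+vC''(v)-1\bigr)=\frac{\gamma e^{\eta}}{C''(v)}\phi'(v),
\]
so convexity of $g$ forces $\phi'(v)\geq 0$ at every such $v$. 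The remaining cases are handled directly: if $C'(v)\geq 1$, then $\phi'(v)\geq C'(v)-1\geq 0$ follows from convexity of $C$ alone; at an isolated zero of $C''$ inside $I$, continuity of $\phi'=C'+vC''-1$ yields the inequality by limits from nearby points where $C''>0$; and if $C$ is affine of slope $\alpha$ on a sub-interval, the one-sided derivatives of $g$ at $\eta=\log(1-\alpha)$ satisfy $g'(\eta^{-})>g'(\eta^{+})$ whenever $\alpha<1$, contradicting the convexity of $g$ and forcing $\alpha\geq 1$, in which case $\phi(v)=v(\alpha-1)$ is already non-decreasing.

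I expect the main obstacle to be the bookkeeping around the parametrization $\eta\leftrightarrow v$ across the boundary $C'(v)=1$ and across possible affine pieces of $C$, together with the verification that the standard duality formulas for $(C^{*})'$ and $(C^{*})''$ apply on the intended subsets of $I$; the heart of the argument is the single second-derivative identity linking $g''$ and $\phi'$ through Legendre duality.
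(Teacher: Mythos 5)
Your proposal is correct and takes essentially the same route as the paper's proof: both exploit the Legendre-dual structure of the inner/outer optimizations, reformulate the Isaacs identity \eqref{hamiltonian} as a curvature statement about a one-variable function built from $C^{*}$ evaluated at $1-e^{\xi}$, and then differentiate twice to land on the inequality $C'(v)+vC''(v)-1\geq 0$ in the region $C'(v)<1$. Your biconjugacy framing ($g=g^{**}$ via Fenchel--Moreau) is simply a cleaner way to phrase the paper's observation that $\sup_{u}\inf_{q}$ must inherit the concavity that $\inf_{q}\sup_{u}$ automatically possesses, and your identity $g''(\eta)=\frac{\gamma e^{\eta}}{C''(v)}\phi'(v)$ is the same computation the paper carries out with $(f^{*})'=(f')^{-1}$. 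The one place your write-up is genuinely stronger is the explicit treatment of degenerate points: the paper silently divides by $C''(r)$ and invokes differentiability of $(C')^{-1}$ without checking where those operations are licit, whereas you restrict the chain-rule computation to $\{C''>0,\,C'<1\}$, handle $C'\geq 1$ from convexity of $C$ alone, handle isolated zeros of $C''$ by continuity of $\phi'$, and rule out affine pieces with slope $\alpha<1$ by the one-sided-derivative jump of $g$ at $\eta=\log(1-\alpha)$. That last step is correct: $(C^{*})'(\alpha^{+})>(C^{*})'(\alpha^{-})$ at a kink of $C^{*}$, and the factor $e^{\eta}>0$ then forces $g'(\eta^{-})>g'(\eta^{+})$, contradicting convexity of $g$. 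So your proof is the paper's argument with the implicit regularity bookkeeping made honest.
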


Part \ref{enumerate:nochange} of Assumption \ref{assumption}  is not necessary, but
it simplifies the analysis,  and  it is appropriate for the situation being
modeled to have zero cost when there is no change from the nominal rates.
The proof of Lemma \ref{lemma:isaac}, which relies on (a modification of) Sion's theorem
(Corollary 3.3 in \cite{Sion1958}), is also deferred to Appendix \ref{ap:a}. We proceed by providing a concrete example of a family of cost functions that satisfy Assumption \ref{assumption}.
	
	\begin{example}\label{example}
		The family of functions $C_{xy}(u)=\frac{1}{pu^{p}}+\frac{u^{q}}{q}-\frac{p+q}{pq},$
		where $p\geq 1$ and $q\geq 1,$ satisfy Assumption \ref{assumption}.
		Clearly,   $C_{xy} (1) = 0$. 
		The derivative of $C_{xy}$ is $-\frac{1}{u^{p+1}}+u^{q-1}$, and so
		$u C_{xy}^\prime (u) - u = -\frac{1}{u^{p}}+u^{q}-u,$ which is always finite. 
		Taking the derivative again gives $\frac{p}{u^{p+1}}+qu^{q-1}-1$, which is
		always bigger than zero, since $\frac{p}{u^{p+1}}\,$\ and $qu^{q-1}$ are
		everywhere positive and bigger than one on the intervals $[0,1]$ and $%
		[1,\infty ),$ respectively.
	\end{example}

	\begin{lemma}
		\label{lemma:isaac} Under Assumption \ref{assumption}, the relation \eqref{toshow} holds for each
		$(x,y) \in {\mathcal Z}$, and hence, the 
		Isaac's condition stated in 
		\eqref{hamiltonian},  is 
		satisfied.
	\end{lemma}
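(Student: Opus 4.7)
The plan is to apply a version of Sion's minimax theorem (Corollary 3.3 of \cite{Sion1958}) to the scalar function $L_{xy}(u,q)$ for each $(x,y) \in \mathcal{Z}$, obtaining \eqref{toshow}. The vector form \eqref{hamiltonian} then follows at once, since each summand $m_x L_{xy}(u_{xy}, q_{xy})$ depends only on the coordinates $(u_{xy}, q_{xy})$, so $\inf_{\boldsymbol q}$ and $\sup_{\boldsymbol u}$ applied to the sum decouple coordinate-wise, and the two sides of \eqref{hamiltonian} agree as soon as the scalar minimax equality does for every $(x,y)$.

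Fix $(x,y)$ and drop subscripts. Expanding $u\ell(q/u) = q\log(q/u) - q + u$ yields
\[ L(u,q) = q\xi + q\log(q/u) - q + u - \gamma C(u/\gamma), \]
from which $L(u,\cdot)$ is visibly convex on $[0,\infty)$ for each $u > 0$. For the other variable,
\[ \partial_u L(u,q) = \frac{1}{u}\bigl[\varphi(u) - q\bigr], \qquad \varphi(u) := u\bigl(1 - C'(u/\gamma)\bigr), \]
and the substitution $v = u/\gamma$ gives $\varphi(u) = -\gamma\bigl(v C'(v) - v\bigr)$, so Assumption \ref{assumption}(\ref{enumerate:convex}) is exactly the statement that $\varphi$ is nonincreasing on the interior of the scaled effective domain of $C$. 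Hence $\partial_u L(\cdot, q)$ changes sign at most once (from positive to negative), making $L(\cdot, q)$ unimodal and in particular quasi-concave on $(0,\infty)$.

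With the convex/quasi-concave structure verified, Sion's theorem supplies the minimax equality on every compact rectangle $[\varepsilon, 1/\varepsilon] \times [0, M]$, and it remains to pass to the limits $\varepsilon \downarrow 0$ and $M \uparrow \infty$. For the $q$-inf I would use the closed-form minimizer $q^{*}(u) = u e^{-\xi}$, which gives $\inf_{q \geq 0} L(u,q) = u(1 - e^{-\xi}) - \gamma C(u/\gamma)$ and makes the truncation $q \in [0,M]$ inactive on any compact $u$-range once $M$ is large enough. For the $u$-sup I would use the monotonicity of $\varphi$ together with $C \geq 0$ and $C(1) = 0$ to confine the interior maximizer $u^{*}(q) = \varphi^{-1}(q)$ to a bounded subinterval of the effective domain of $C$, uniformly for $q$ in a compact set, so that the truncation $u \in [\varepsilon, 1/\varepsilon]$ likewise becomes inactive.

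The hard part is expected to be this boundary-and-compactness analysis rather than the algebra: one must handle the logarithmic singularity of $u\ell(q/u)$ as $u \downarrow 0$, the possibility that $C$ takes the value $+\infty$ outside a bounded interval (so $\varphi$ is defined and monotone only on the interior of $\mathrm{dom}\, C$), and the case when $q$ lies outside the range of $\varphi$, where the sup in $u$ is attained in a boundary sense and a direct critical-point argument fails. Packaging these truncation-and-limit arguments cleanly is presumably what motivates the authors' appeal to a \emph{modification} of Corollary 3.3 of \cite{Sion1958} rather than the standard compact-domain statement.
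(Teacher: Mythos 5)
Your proposal is correct and follows essentially the same route as the paper: verify convexity of $L_{xy}(u,\cdot)$ and quasi-concavity of $L_{xy}(\cdot,q)$ (your $\varphi(u)=u(1-C'(u/\gamma))$ is exactly the paper's auxiliary function $u\,C_{xy}'(u/\gamma_{xy})-u+q$ up to sign, and its monotonicity is precisely Assumption \ref{assumption}(\ref{enumerate:convex})), then apply Sion's theorem with a limiting argument to cope with non-compactness. The only presentational difference is in the compactness reduction: the paper first pins $q$ to a compact interval $[0,\tilde q]$ using $L_{xy}(1,q)\to\infty$ as $q\to\infty$, after which Sion applies directly and only the $u$-range needs a limiting truncation, which bypasses the boundary-attainment cases of $u^*(q)=\varphi^{-1}(q)$ that you correctly flag as delicate.
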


	As an immediate corollary of the lemma, we have the following result:
	
	\begin{corollary}
		\label{cor:isaac}
		Under Assumption \ref{assumption}, for each $\mathbf{m} \in {\mathcal P}({\mathcal X})$ and
		$\boldsymbol{\xi} \in   \mathbb{R}^{|{\mathcal Z}|}$,  
		\[  H(\mathbf{m},\boldsymbol{\xi }) 
		= \sum_{(x,y)\in \mathcal{Z}} m_{x}\gamma _{xy}(C_{xy})^{\ast }\left( 1-e^{-\xi _{xy}}\right).
		\]
		where $(C_{xy})^{\ast }:(-\infty ,1)\rightarrow \mathbb{R}$ is 
		given by
		\begin{equation}
		\label{def-cast}
		(C_{xy})^{\ast }(z) :=\ sup_{u>0}\left[ zu-C_{xy}(u)\right].
		\end{equation}
	\end{corollary}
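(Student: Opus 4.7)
The plan is to invoke Lemma \ref{lemma:isaac} to swap the inf and sup in \eqref{hamiltonian0}, after which the problem decouples across $(x,y) \in \mathcal{Z}$ and each summand can be computed explicitly. Concretely, by Lemma \ref{lemma:isaac},
\[
H(\mathbf{m},\boldsymbol{\xi}) \;=\; \sum_{(x,y)\in\mathcal{Z}} m_x \sup_{u>0} \inf_{q\geq 0}\left[ q\xi_{xy} + u\,\ell\!\left(\tfrac{q}{u}\right) - \gamma_{xy} C_{xy}\!\left(\tfrac{u}{\gamma_{xy}}\right)\right],
\]
since for fixed $\mathbf{m}$ the joint inf/sup separates into a product of independent scalar problems indexed by $(x,y)$.

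Next I would carry out the inner infimum. For each fixed $u>0$, substituting $s=q/u$ reduces it to $u\inf_{s\geq 0}[\,s\xi_{xy}+\ell(s)\,]$, where $\ell(s)=s\log s - s + 1$. Differentiating in $s$ gives the optimizer $s^\ast = e^{-\xi_{xy}}$, which lies in $[0,\infty)$ for every $\xi_{xy}\in\mathbb{R}$, and the minimum value reduces to $u\bigl(1 - e^{-\xi_{xy}}\bigr)$ after cancellation. Note $1 - e^{-\xi_{xy}} \in (-\infty,1)$, which is the domain on which $(C_{xy})^\ast$ is defined in \eqref{def-cast}.

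Substituting back, the remaining outer supremum becomes
\[
\sup_{u>0}\left[ u\bigl(1-e^{-\xi_{xy}}\bigr) - \gamma_{xy} C_{xy}\!\left(\tfrac{u}{\gamma_{xy}}\right)\right].
\]
Changing variable $v = u/\gamma_{xy}$ and factoring out $\gamma_{xy}$, this is exactly $\gamma_{xy}(C_{xy})^\ast\!\bigl(1-e^{-\xi_{xy}}\bigr)$ by the definition in \eqref{def-cast}. Summing and multiplying by $m_x$ yields the claimed formula.

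The only potentially delicate step is justifying that we may use the pointwise (in $u,q$) Isaac equality on a per-edge basis to decouple the joint min-max, but this is immediate because the objective in \eqref{hamiltonian0} is a sum of terms each depending on a single pair $(u_{xy},q_{xy})$, and the feasible set is a product set; thus swapping inf and sup in the joint problem is equivalent to swapping them in each scalar problem, which is precisely what Lemma \ref{lemma:isaac} provides. The rest is elementary one-variable calculus using the explicit form of $\ell$.
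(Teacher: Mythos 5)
Your proposal is correct and follows essentially the same route as the paper: invoke Lemma \ref{lemma:isaac} (which already packages both the per-edge exchange and the product-set decoupling), compute $\inf_q L_{xy}(u,q)$ to get $u(1-e^{-\xi_{xy}}) - \gamma_{xy}C_{xy}(u/\gamma_{xy})$, and then recognize the remaining $\sup_u$ as $\gamma_{xy}(C_{xy})^{\ast}(1-e^{-\xi_{xy}})$ after the change of variable $v=u/\gamma_{xy}$. The only cosmetic difference is that you obtain the inner minimizer via the substitution $s=q/u$ whereas the paper differentiates $L_{xy}$ in $q$ directly; these are the same computation.
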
 
	\begin{proof}
		First,  note that for each $(x,y)\in \mathcal{Z}$, using the fact that
		$\partial_q L_{xy} (u,q) = \xi_{xy} + \log (q/u)$, and $\partial_{qq} L_{xy} (u,q) > 0$ for
		$q > 0$, we see that 
		\begin{equation}
		\begin{split}
		\inf_{q_{xy}\in [0,\infty)}  L_{xy} (u_{xy}, q_{xy})  &= u_{xy} ( 1 - e^{-\xi_{xy}}) - \gamma_{xy} C_{xy} \left( \frac{u_{xy}}{\gamma_{xy}} \right). 
		\end{split}
		\end{equation}
		Also note  that, by the definition of $(C_{xy})^{\ast}$,
		\begin{equation} 
		\begin{split}
		\sup_{u_{xy}\in (0,\infty)} \left[  u_{xy} ( 1 - e^{-\xi_{xy}})
		- \gamma_{xy} C_{xy} \left( \frac{u_{xy}}{\gamma_{xy}} \right) \right]
		= \gamma_{xy} (C_{xy})^{\ast} (1 - e^ -\xi_{xy}).  
		\end{split}
		\end{equation}
		The corollary is then a simple consequence of the above two observations,
		\eqref{hamiltonian0} and  Lemma \ref{lemma:isaac}.
	\end{proof}

	We now summarize some other properties of the cost function that will be useful
	in the sequel.

	\begin{lemma}
		\label{lemos} Under Assumption \ref{assumption}, the cost function $C_{xy}$
		satisfy the following on $(0,\infty)$:
		
		\begin{enumerate}
			\item for every $(x,y)\in \mathcal{Z}$ we have $(C_{xy})^{\prime }(u)\geq 1-%
			\frac{1}{u}$ for $u>1,$ and therefore $\liminf_{u\rightarrow \infty
			}(C_{xy})^{\prime }(u)\geq 1$,
			
			\item \label{enumerate:lemospointtwo} for every $(x,y)\in \mathcal{Z}$ and $%
			u\in (0,\infty )$ we have $C_{xy}(u)\geq -\log u+u-1.$
		\end{enumerate}
	\end{lemma}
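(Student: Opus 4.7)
The plan is to exploit the three pieces of Assumption \ref{assumption}---convexity, non-negativity, and the condition that $uC_{xy}'(u)-u$ is increasing---together with the normalization $C_{xy}(1)=0$, to pin down the value of $uC_{xy}'(u)-u$ at the reference point $u=1$, and then to extend this to both sides by monotonicity.

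\textbf{Step 1 (compute the value at $u=1$).} Since $C_{xy}\geq 0$, is convex, and satisfies $C_{xy}(1)=0$, the point $u=1$ is a global minimizer in the interior of the maximal open interval on which $C_{xy}$ is finite. Consequently the left and right derivatives at $u=1$ are both zero, so $C_{xy}'(1)=0$ and hence $uC_{xy}'(u)-u\big|_{u=1}=-1$.

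\textbf{Step 2 (part 1).} By the monotonicity in Assumption \ref{assumption}\eqref{enumerate:convex}, for every $u>1$ in the interval of finiteness, $uC_{xy}'(u)-u\geq -1$. Dividing by $u$ gives $C_{xy}'(u)\geq 1-1/u$, from which $\liminf_{u\to\infty}C_{xy}'(u)\geq 1$ is immediate. (If $C_{xy}$ is infinite on $(u_{0},\infty)$ for some $u_{0}$, the monotone behavior forces $C_{xy}'(u)\to\infty$ as $u\uparrow u_{0}$, so the $\liminf$ statement still holds.)

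\textbf{Step 3 (part 2).} Introduce the auxiliary function
\begin{equation*}
\phi(u):= C_{xy}(u)-\bigl(-\log u+u-1\bigr)=C_{xy}(u)+\log u-u+1,
\end{equation*}
which satisfies $\phi(1)=0$ and $\phi'(u)=C_{xy}'(u)-(1-1/u)$. By Step 2, $\phi'(u)\geq 0$ for $u>1$, so $\phi$ is non-decreasing on $(1,\infty)$ and thus $\phi(u)\geq \phi(1)=0$ there. On the other hand, applying the monotonicity of $uC_{xy}'(u)-u$ in the opposite direction, for $u<1$ one has $uC_{xy}'(u)-u\leq -1$, i.e.\ $C_{xy}'(u)\leq 1-1/u$, so $\phi'(u)\leq 0$ on $(0,1)$; hence $\phi$ is non-increasing there and again $\phi(u)\geq \phi(1)=0$. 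Points where $C_{xy}(u)=+\infty$ are trivial. This yields $C_{xy}(u)\geq -\log u+u-1$ on all of $(0,\infty)$.

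The only real obstacle is the mild technical issue that $C_{xy}$ is only assumed convex (hence merely differentiable off a countable set) and may take the value $+\infty$. These are handled by interpreting $C_{xy}'$ at $u=1$ via the left and right derivatives (which coincide at a minimum of a convex function) and by noting that the inequalities in both parts are trivial at points where $C_{xy}$ is infinite.
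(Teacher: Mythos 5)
Your proof is correct and follows essentially the same route as the paper: both use that $C_{xy}$ is convex, nonnegative, and vanishes at $1$ to pin down $C_{xy}'(1)=0$, hence $uC_{xy}'(u)-u=-1$ at $u=1$; both then invoke the monotonicity of $uC_{xy}'(u)-u$ from Assumption~\ref{assumption}\eqref{enumerate:convex} for part~1; and both obtain part~2 by integrating the resulting derivative inequality from $1$ (the paper phrases this as ``comparing $C_{xy}(u)$ with $\int_1^u[1-1/s]\,ds$,'' which is precisely your auxiliary function $\phi$). Your Step~3 also handles $u<1$ explicitly, using the monotonicity in the opposite direction, which the paper leaves implicit.

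One small imprecision: the parenthetical remark that the left and right derivatives of a convex function ``coincide at a minimum'' is false in general (consider $u\mapsto|u-1|$). What is true, and all that the argument actually uses, is that at the global minimum $u=1$ the right derivative is $\geq 0$ and the left derivative is $\leq 0$, so $\lim_{u\to 1^+}\bigl(uC_{xy}'(u)-u\bigr)\geq -1$ and $\lim_{u\to 1^-}\bigl(uC_{xy}'(u)-u\bigr)\leq -1$; monotonicity then gives the two one-sided inequalities you need for Steps~2 and~3. Since the Assumption phrases the hypothesis in terms of $C_{xy}'$, the paper implicitly treats $C_{xy}$ as differentiable on the interval of finiteness, in which case the two one-sided derivatives at the minimum do coincide and your claim holds verbatim; either reading leads to the same conclusion.
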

	\begin{proof}
		It follows from the monotonicity that $uC_{xy}^{\prime }(u)-{u\geq -1}$ for $%
		u>1$, which gives the first statement. The second follows by comparing $%
		C_{xy}(u)$ with $\int_{1}^{u}\left[ 1-\frac{1}{s}\right] ds$ and using $%
		C_{xy}(1)=0.$
	\end{proof} \\

	We conclude with  a lemma
	that collects some
	properties of $F_{xy}$, and whose proof is provided in Appendix \ref{ap:b}.  
	
	\begin{lemma}
		\label{lemma:fnproperties} For every $(x,y)\in \mathcal{Z},$ let $F_{xy}$ be
		as in (\ref{eqdef:Fxy,Gxy}), where $\{C_{xy}\}$ satisfy Assumption \ref%
		{assumption}. Then the following properties hold: 
		\begin{equation*}
		1.\,F_{xy}(q)\geq \gamma _{xy}\ell \left( \frac{q}{\gamma _{xy}}\right) \geq
		0,\hspace{16pt}2.\,F_{xy}(\gamma _{xy})=0,\hspace{16pt}3.\,F_{xy}\text{ is
			convex on}\hspace{8pt}[0,\infty ).
		\end{equation*}
	\end{lemma}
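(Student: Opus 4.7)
The plan is to verify each of the three properties in turn, making essential use of the identity $C_{xy}(1)=0$ (part \ref{enumerate:nochange} of Assumption \ref{assumption}), the lower bound $C_{xy}(u)\geq -\log u + u - 1$ from Lemma \ref{lemos}\ref{enumerate:lemospointtwo}, and the convexity of $\ell$ (which follows from $\ell''(q)=1/q>0$ on $(0,\infty)$, with unique minimizer $q=1$ and $\ell(1)=0$).

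For property 1, I will simply evaluate $G_{xy}$ at the specific choice $u=\gamma_{xy}$. Since $C_{xy}(1)=0$, one gets $G_{xy}(\gamma_{xy},q)=\gamma_{xy}\ell(q/\gamma_{xy})$, so taking the supremum over $u>0$ yields $F_{xy}(q)\geq \gamma_{xy}\ell(q/\gamma_{xy})\geq 0$, where the last inequality follows because $\ell$ is nonnegative.

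For property 2, combining property 1 (applied at $q=\gamma_{xy}$, noting $\ell(1)=0$) gives $F_{xy}(\gamma_{xy})\geq 0$. For the reverse inequality, I will show $G_{xy}(u,\gamma_{xy})\leq 0$ for every $u>0$. Direct expansion gives
\begin{equation*}
u\,\ell(\gamma_{xy}/u)=\gamma_{xy}\log(\gamma_{xy}/u)-\gamma_{xy}+u,
\end{equation*}
while Lemma \ref{lemos}\ref{enumerate:lemospointtwo} applied to $u/\gamma_{xy}$ yields
\begin{equation*}
\gamma_{xy}C_{xy}(u/\gamma_{xy})\geq \gamma_{xy}\log(\gamma_{xy}/u)+u-\gamma_{xy}.
\end{equation*}
Subtracting shows $G_{xy}(u,\gamma_{xy})\leq 0$, so taking the supremum over $u$ gives $F_{xy}(\gamma_{xy})\leq 0$, and hence equality.

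For property 3, I will observe that for each fixed $u>0$ the map $q\mapsto G_{xy}(u,q)=u\ell(q/u)-\gamma_{xy}C_{xy}(u/\gamma_{xy})$ is convex in $q$ (the second term is a constant in $q$, and $q\mapsto u\ell(q/u)$ is convex on $[0,\infty)$ as a composition of the convex function $\ell$ with an affine map, scaled by a positive constant). Since a pointwise supremum of convex functions is convex, $F_{xy}=\sup_{u>0}G_{xy}(\cdot,u)$ is convex on $[0,\infty)$. I do not anticipate any real obstacle; the only subtlety is the lower bound in property 2, which is exactly the content of Lemma \ref{lemos}\ref{enumerate:lemospointtwo} and is the reason that statement was recorded just before.
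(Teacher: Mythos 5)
Your proof is correct and follows essentially the same route as the paper: evaluate $G_{xy}$ at $u=\gamma_{xy}$ using $C_{xy}(1)=0$ for the lower bound, invoke Lemma \ref{lemos}(\ref{enumerate:lemospointtwo}) to get $G_{xy}(u,\gamma_{xy})\leq 0$ for property 2, and use the fact that a supremum of convex functions is convex for property 3. No gaps.
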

	
	\subsection{Equivalence of the stochastic problems}
	\label{subs-equiv}
	
	\begin{theorem}
		\label{Main A} Let $n\in \mathbb{N},\,\mathcal{K}\subset \mathcal{X}^{n},$
		(resp. $K\subset \mathcal{P}^{n}(\mathcal{X})$), and $\boldsymbol{C},R$ be
		as in Assumption \ref{assumption}. Then 
		\begin{equation}
		V_{K}^{n}(\boldsymbol{m})=-\frac{1}{n}\log (W_{K}^{n}(\boldsymbol{m}))
		\end{equation}%
		and%
		\begin{equation}
		\mathcal{V}_{\mathcal{K}}^{n}(\boldsymbol{x}^{n})=-\frac{1}{n}\log (\mathcal{%
			W}_{\mathcal{K}}^{n}(\boldsymbol{x}^{n})).
		\end{equation}
		
		If, in addition, $\mathcal{K}\subset \mathcal{X}^{n}$ is invariant under
		permutations, and therefore can be identified with a subset of $\mathcal{P}%
		^{n}(\mathcal{X})$, then%
		\begin{equation}
		-\frac{1}{n}\log (W_{K}^{n}(L(\boldsymbol{x}^{n})))=V_{K}^{n}(L(\boldsymbol{x%
		}^{n}))= \mathcal{V}_{\mathcal{K}}^{n}(\boldsymbol{x}^{n})=-\frac{1}{n}\log (%
		\mathcal{W}_{\mathcal{K}}^{n}(\boldsymbol{x}^{n})).
		\end{equation}
	\end{theorem}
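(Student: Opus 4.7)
\medskip
\noindent\textbf{Proof plan for Theorem \ref{Main A}.} My strategy is to identify both value functions with unique solutions of coupled Bellman equations and then use Lemma \ref{iff} to convert one into the other. The argument proceeds in three stages corresponding to the three equalities.

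\emph{Stage 1: Bellman characterizations on $\mathcal{P}^n(\mathcal{X})$.} First I would verify, via a standard dynamic programming argument for controlled pure-jump processes, that $V_K^n$ satisfies the discrete Bellman equation \eqref{prelimeq} with boundary condition $V(\boldsymbol{m})=0$ for $\boldsymbol{m}\in K$. The dynamics \eqref{eqdef:calTn} are driven by finitely many bounded-rate Poisson noises on a finite state space $\mathcal{P}^n(\mathcal{X})$, so the DPP is clean: one writes the one-step cost plus the value at the post-jump state, takes the infimum over control rates $q_{xy}\in[0,\infty)$, and recognizes \eqref{prelimeq} using the definition of $H$. Similarly, $W_K^n$ satisfies the multiplicative Bellman equation \eqref{eqdef:Weq} with $W=1$ on $K$: here one factors the exponential cost across a short time interval and takes the infimum over $\boldsymbol{u}\in(0,\infty)^{|\mathcal{Z}|}$, obtaining the supremum form after rearranging. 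Uniqueness of solutions on the finite set $\mathcal{P}^n(\mathcal{X})\setminus K$ with the given boundary conditions follows from a standard verification/contraction argument: given any solution $\tilde V$, one constructs a feedback control attaining the infimum in $H$ and shows by a martingale computation on the controlled process that $\tilde V$ equals the cost under this feedback, hence $\tilde V\geq V_K^n$; the reverse inequality comes from the DPP inequality valid for every admissible control.

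\emph{Stage 2: The first equality via Lemma \ref{iff}.} With these characterizations in hand, apply Lemma \ref{iff} to $\tilde V=V_K^n$: the function $e^{-nV_K^n}$ is a solution to \eqref{eqdef:Weq} with value $1$ on $K$. By the uniqueness from Stage 1, $e^{-nV_K^n}=W_K^n$, which after taking $-\tfrac{1}{n}\log$ gives $V_K^n(\boldsymbol{m})=-\tfrac{1}{n}\log W_K^n(\boldsymbol{m})$.

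\emph{Stage 3: Many-agent analogue and equivalence under exchangeability.} Repeating Stage 1 at the level of $\mathcal{X}^n$: the value $\mathcal{V}_{\mathcal{K}}^n$ satisfies the Bellman equation obtained from \eqref{eqdef:JVncal} by the same DPP, and $\mathcal{W}_{\mathcal{K}}^n$ satisfies its multiplicative counterpart coming from \eqref{eqdef:JWscr}. The pointwise per-agent structure of both integrands (a sum over $i$ of the same function of $\chi_i^n$ and $u_{xy}(\cdot,i)$) lets the infimum over $\boldsymbol{u}\in\mathcal{A}_b^{1,n|\mathcal{Z}|}$ decouple across the $n|\mathcal{Z}|$ PRMs, and Lemma \ref{iff} applies in exactly the same form agent-by-agent, yielding the second identity $\mathcal{V}_{\mathcal{K}}^n=-\tfrac{1}{n}\log\mathcal{W}_{\mathcal{K}}^n$. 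For the final identification, assume $\mathcal{K}$ is permutation-invariant. Then, as observed in the discussion preceding the theorem, exchangeability gives $\mathcal{V}_{\mathcal{K}}^n(\boldsymbol{x}^n)=\mathcal{V}_{\mathcal{K}}^n(\boldsymbol{x}_\sigma^n)$ for every permutation $\sigma$; averaging any admissible control over permutations and using convexity (which holds because the per-agent cost $F_{xy}$ is convex by Lemma \ref{lemma:fnproperties}) shows that it suffices to optimize over exchangeable controls, which depend only on the empirical measure. Under such controls the $n$-agent dynamics project onto the $\mathcal{P}^n(\mathcal{X})$-valued empirical measure process, and the integrand in \eqref{eqdef:JVncal} becomes exactly $\sum_{(x,y)}\mu_x F_{xy}(q_{xy})+R(\boldsymbol{\mu})$ as in \eqref{eqdef:JVn}; thus $\mathcal{V}_{\mathcal{K}}^n(\boldsymbol{x}^n)=V_K^n(L(\boldsymbol{x}^n))$. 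Combining this with the two Bellman identities closes the chain of four equalities.

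\emph{Main obstacle.} The delicate step is Stage 3: one must justify rigorously that the optimal exchangeable control is as good as an arbitrary non-exchangeable control, i.e., the symmetrization argument. This requires the convexity of $F_{xy}$ (to apply Jensen after averaging over $\sigma\in\mathbb{S}_n$) together with care about measurability so that the symmetrized control lies in $\mathcal{A}_b^{1,n|\mathcal{Z}|}$, and a matching lower bound obtained by lifting any empirical-measure control back to an agent-level control by an arbitrary labeling. Once this reduction is in place, the projection from $\mathcal{X}^n$-dynamics to $\mathcal{P}^n(\mathcal{X})$-dynamics is a direct consequence of \eqref{many agent process} versus \eqref{mean field process}, and the equality $\mathcal{V}_{\mathcal{K}}^n(\boldsymbol{x}^n)=V_K^n(L(\boldsymbol{x}^n))$ drops out.
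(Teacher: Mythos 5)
Your overall architecture matches the paper's: characterize $V^n_K$ and $W^n_K$ as unique solutions of coupled Bellman equations, use Lemma \ref{iff} to pass between them, and then handle the permutation-invariant case by an averaging argument that exploits convexity of $F_{xy}$. Stages 1 and 2 are essentially the paper's proof. However, Stage 3 departs from the paper in a way that is both less tractable and not fully worked out. The paper does \emph{not} symmetrize controls over $\sigma\in\mathbb{S}_n$ at the cost-functional level. Instead it observes that permutation invariance of $\mathcal{K}$ forces $\mathcal{V}^n_{\mathcal{K}}$ to factor through $L$, so that $\mathcal{V}^n_{\mathcal{K}}(\boldsymbol{x}^n)=\bar V(L(\boldsymbol{x}^n))$ for some $\bar V$; it then works entirely at the HJB level \eqref{xhjb} and converts the $\mathcal{X}^n$-HJB into the $\mathcal{P}^n(\mathcal{X})$-HJB by a per-agent averaging of the rates, defining $\tilde q_{xy}$ via $nL_x(\boldsymbol{x}^n)\tilde q_{xy}=\sum_{i=1}^n 1_{\{x_i^n=x\}}\bar q_{x_i^n y}$ and applying convexity of $F_{xy}$ to the static Hamiltonian. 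This is a purely algebraic step. Your cost-level version, ``averaging any admissible control over permutations and using convexity,'' does not obviously work: the costs $\mathcal{I}^n_{\mathcal{K}}(\boldsymbol{x}^n,\boldsymbol{u}_\sigma)$ are all \emph{equal} (the paper notes this before the theorem), so Jensen applied to them yields nothing, and the controlled processes under the different $\boldsymbol{u}_\sigma$ are genuinely different, so it is unclear what a ``mixed'' control would be or why its cost would not exceed each individual one once you account for the exit time $T_{\mathcal{K}}$ being a nonlinear functional of the path. If you want to salvage a cost-level argument you would need Jensen at the level of the controlled stochastic system, which is precisely the delicacy the paper's HJB-level averaging avoids.

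A second, smaller gap is in Stage 1. You describe uniqueness as a ``standard verification/contraction argument,'' but the uniqueness for the multiplicative equation \eqref{eqdef:Weq} is not a bare contraction: the paper needs Lemma \ref{finitetime} (a quantitative bound on $\mathbb{E}[e^{-nR_{\max}(T_K-t'\wedge T_K)}\mid\mathcal{F}_{t'}]$ and $T_K<\infty$ a.s.), the exponential martingale Lemma \ref{expmartingale}, and Lemma \ref{eventuallydecreasingcontrols} to reduce to controls with eventually non-positive running integrand, so that monotone/dominated convergence can be applied when sending $t\to\infty$. These steps are load-bearing for the exit-time exponential cost in \eqref{eqdef:JWn}, and your sketch would need to supply them.
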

	
	The proof of this result appears later in this section.
	Also, we will only prove the first equality and note that the third follows in a similar manner. We begin with some preparatory lemmas.

	\begin{lemma}
		\label{lemma-n}
		Let $n\in \mathbb{N},\, \emptyset \neq K\subset \mathcal{P}^{n}(\mathcal{X})$%
		, and $\boldsymbol{C},R$ be as in Assumption \ref{assumption}. Then, the equation (\ref{prelimeq}) has at least one solution.
	\end{lemma}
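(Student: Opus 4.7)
My plan is to exhibit a solution as the value function $V_K^n$ defined in \eqref{eqdef:Vn} and to verify that it satisfies \eqref{prelimeq} via a dynamic programming argument on the finite state space $\mathcal{P}^n(\mathcal{X})$. Specifically, I would (i) show $V_K^n$ is finite everywhere, (ii) establish the dynamic programming principle (DPP) for $V_K^n$, and (iii) pass to the infinitesimal form of the DPP to recover \eqref{prelimeq}. The boundary condition $V_K^n \equiv 0$ on $K$ is automatic since $T_K = 0$ there.

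For step (i), I would take the constant control $\boldsymbol{q} \equiv \boldsymbol{\gamma}$, which lies in $\mathcal{A}_b^{n,|\mathcal{Z}|}$ since $\boldsymbol{\gamma}$ is strictly positive and finite on $\mathcal{Z}$. By Lemma~\ref{lemma:fnproperties}, $F_{xy}(\gamma_{xy}) = 0$, so the integrand in \eqref{eqdef:JVn} reduces to $R(\boldsymbol{\mu}(t)) \leq \|R\|_\infty$, the latter being finite by continuity of $R$ on the compact simplex. Under $\boldsymbol{\gamma}$, the process $\boldsymbol{\mu}$ is the empirical measure of the irreducible $n$-agent chain on $\mathcal{X}^n$ (itself irreducible by ergodicity of $\boldsymbol{\gamma}$), and is hence irreducible on the finite set $\mathcal{P}^n(\mathcal{X})$, so $T_K$ has finite expectation from every starting point $\boldsymbol{m}$. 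This gives $V_K^n(\boldsymbol{m}) \leq \|R\|_\infty \cdot \mathbb{E}_{\boldsymbol{m}}[T_K] < \infty$ for all $\boldsymbol{m}$.

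For steps (ii)--(iii), I would invoke the standard DPP for exit-time control on finite state spaces with uniformly bounded controls: for every $\mathcal{F}_t$-stopping time $\sigma$ with $\sigma \leq T_K$ a.s.,
\begin{equation*}
V_K^n(\boldsymbol{m}) = \inf_{\boldsymbol{q} \in \mathcal{A}_b^{n,|\mathcal{Z}|}} \mathbb{E}_{\boldsymbol{m}} \left[ \int_0^\sigma g(\boldsymbol{\mu}(t),\boldsymbol{q}(t))\, dt + V_K^n(\boldsymbol{\mu}(\sigma)) \right],
\end{equation*}
where $g(\boldsymbol{m}, \boldsymbol{q}) := \sum_{(x,y) \in \mathcal{Z}} m_x F_{xy}(q_{xy}) + R(\boldsymbol{m})$; this follows from the strong Markov property of $\boldsymbol{\mu}$ and measurable selection of near-optimal controls. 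For $\boldsymbol{m} \notin K$, setting $\sigma = h \wedge T_K$ for small $h > 0$, expanding the expectation to first order in $h$ using the controlled generator of $\boldsymbol{\mu}$, dividing by $h$ and letting $h \downarrow 0$ yields
\begin{equation*}
0 = \inf_{\boldsymbol{q}} \left\{ g(\boldsymbol{m}, \boldsymbol{q}) + \sum_{(x,y) \in \mathcal{Z}} m_x q_{xy} \Delta_{xy}^n V_K^n(\boldsymbol{m}) \right\} = H(\boldsymbol{m}, \Delta^n V_K^n(\boldsymbol{m})) + R(\boldsymbol{m}),
\end{equation*}
which is precisely \eqref{prelimeq}.

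The main technical obstacle I anticipate is reconciling the unrestricted infimum over $\boldsymbol{q} \in [0,\infty)^{|\mathcal{Z}|}$ appearing in the definition \eqref{eqn:defofHn} of $H$ with the DPP infimum over admissible controls in $\mathcal{A}_b^{n,|\mathcal{Z}|}$ (bounded away from $0$ and $\infty$). This is handled by the coercivity of $F_{xy}$ at infinity (from $F_{xy}(q) \geq \gamma_{xy}\ell(q/\gamma_{xy})$ in Lemma~\ref{lemma:fnproperties}) together with continuity at the boundary: any minimizer in $H(\boldsymbol{m}, \boldsymbol{\xi})$ is either interior in $(0,\infty)^{|\mathcal{Z}|}$ or can be approximated by admissible controls, so the two infima agree. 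Exchanging the infimum and expectation in the infinitesimal passage is likewise routine given the finite state space and the convexity and continuity of $H$ in $\boldsymbol{\xi}$ inherited from Assumption~\ref{assumption} and Corollary~\ref{cor:isaac}.
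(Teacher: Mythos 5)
Your proposal takes a genuinely different route from the paper. You attempt to show directly that $V_K^n$ itself solves \eqref{prelimeq} via the continuous-time dynamic programming principle with a deterministic small horizon $\sigma=h\wedge T_K$, followed by an infinitesimal passage ("expand to first order in $h$, divide by $h$, let $h\downarrow 0$"). The paper instead passes to the \emph{embedded} discrete-time Markov decision process: it writes the transition probability to the next state and the expected cost accrued until the next jump explicitly, restricts a priori to controls in a set $A_{a_0}$ on which $\sum m_x q_{xy}$ is bounded above and below (which is justified by the coercivity of $F_{xy}$, in particular $F_{xy}\geq\gamma_{xy}\ell(\cdot/\gamma_{xy})$ and $F_{xy}(0)\geq\gamma_{xy}$), invokes Bertsekas' Bellman equation for stochastic-shortest-path problems, and then algebraically rewrites that discrete equation in the form \eqref{prelimeq}. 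Notably, the paper's Lemma~\ref{lemma-n} only constructs \emph{some} solution $\bar V_K^n$ and never claims in the lemma that it equals $V_K^n$; that identification is deferred to Theorem~\ref{Main A} via a verification/uniqueness argument. Your route combines existence and identification in one step, which is tidier if it can be made rigorous, but it anticipates work the paper deliberately postpones.

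The place where your sketch is not yet a proof is the infinitesimal passage. For the inequality $0\leq H(\boldsymbol m,\Delta^n V_K^n(\boldsymbol m))+R(\boldsymbol m)$, fixing a constant control, expanding, dividing by $h$ and then infimizing is fine. But for the reverse inequality you must show the DPP infimum is attained at cost $h\bigl(H(\boldsymbol m,\Delta^n V_K^n(\boldsymbol m))+R(\boldsymbol m)\bigr)+o(h)$, and here the interchange of $\inf_{\boldsymbol q}$ with $h\downarrow 0$ is not automatic: an $h$-near-optimal control in $\mathcal A_b^{n,|\mathcal Z|}$ may use rates that grow as $h\downarrow 0$, so the probability of multiple jumps in $[0,h]$ and the $o(h)$ remainder are not uniform over the competitors in the infimum. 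This is exactly the point that the paper's restriction to $A_{a_0}$ (together with choosing the stopping time to be the first jump time, i.e.\ the embedded chain) is designed to neutralize, and which you would also need: first establish, from the coercivity and superlinearity of $F_{xy}$, that only controls in a fixed $A_{a_0}$ can be near-optimal in the DPP, and only then perform the small-$h$ expansion (or, equivalently, switch to the embedded chain). You gesture at coercivity when reconciling the infimum over $[0,\infty)^{|\mathcal Z|}$ with the admissible-control infimum, which is the right ingredient, but it needs to be deployed \emph{before} the limit $h\downarrow 0$, not only at the level of the pointwise Hamiltonian. With that repair your argument would go through; as written the crucial step is a formal manipulation rather than a proof.
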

	
	\begin{proof}
		For the proof we use the equivalent discrete time stochastic control
		problem. We consider the following set of controls 
		\begin{equation}
		\begin{split}
		A_{a}(\boldsymbol{m}):= \left\{\boldsymbol{q}\in [0,\infty )^{|\mathcal{Z}|} : 
		\frac{1}{a}\geq \sum_{(x,y)\in \mathcal{Z}}m_{x}q_{xy}(\boldsymbol{m})\geq
		a\right\}\hspace{8pt}\text{and}\hspace{8pt} A_{+}(\boldsymbol{m}%
		):=\cup_{a>0}A_{a}(\boldsymbol{m}).
		\end{split}%
		\end{equation}
		For such a control the probability of moving from state $\boldsymbol{m}$ to
		state $\boldsymbol{m}+\frac{1}{n}\boldsymbol{v}_{\tilde{x},\tilde{y}}$ will
		be given by 
		\begin{equation*}
		\frac{m_{\tilde{x}}q_{\tilde{x}\tilde{y}}(\boldsymbol{m})}{\sum_{(x,y)\in 
				\mathcal{Z}}m_{x}q_{xy}(\boldsymbol{m})},
		\end{equation*}%
		and the (conditional) expected cost till the time of transition is given by 
		\begin{equation*}
		\frac{\sum_{(x,y)\in \mathcal{Z}}m_{x}F_{xy}(q_{xy}(\boldsymbol{m}))+R(%
			\boldsymbol{m})}{n\sum_{(x,y)\in \mathcal{Z}}m_{x}q_{xy}(\boldsymbol{m})}.
		\end{equation*}%
		Also, with some abuse of notation, we define the set of feedback controls 
		\begin{equation}
		A_{a}=\{\boldsymbol{q}\in [0,\infty )^{|\mathcal{P}^{n}(\mathcal{X})\times 
			\mathcal{Z}|} : \boldsymbol{q}(\boldsymbol{m})\in A_{a}(\boldsymbol{m})) 
		\hspace{8pt}\text{and}\hspace{8pt} A_{+}=\cup_{a>0}A_{a}.
		\end{equation}
		Given controlled transition probabilities as above, let $\boldsymbol{\mu }%
		(i) $ be the corresponding controlled discrete time process. We define the
		value function $\bar{V}_{K}^{n}(\boldsymbol{m}):\mathcal{P}(\mathbb{R}%
		^{d})\rightarrow \lbrack 0,\infty )$ by 
		\begin{equation}  \label{no R}
		\bar{V}_{K}^{n}(\boldsymbol{m}):= \inf_{\boldsymbol{q}\in A_{+}}\mathbb{E%
		}_{\boldsymbol{m}}\left[ \sum_{i=1}^{T_{K}}\frac{\sum_{(x,y)\in \mathcal{Z}%
			}\mu _{x}(i)F_{xy}(q_{xy}(\boldsymbol{\mu }(i)))+R(\boldsymbol{\mu }(i))}{%
			n\sum_{(x,y)\in \mathcal{Z}}\mu _{x}(i)q_{xy}(\boldsymbol{\mu }(i))}\right] ,
		\end{equation}%
		where $\mathbb{E}_{\boldsymbol{m}}$ denotes expected value given $%
		\boldsymbol{\mu }(0)=\boldsymbol{m}$ and $T_{K}:= \inf \{i\in \mathbb{N}:%
		\boldsymbol{\mu }(i)\in K\}.$
		
		To see that $\bar{V}_{K}^{n}(\boldsymbol{m})$ is finite, we just have to use
		the original rates and note that the total cost is proportional to the
		expected exit time, which is finite by classical results on Markov chains.
		Since $F_{xy},R\geq 0,$ and $F_{xy}$ is convex with $\gamma_{xy}\ell\left(%
		\frac{\cdot}{\gamma_{xy}}\right)$ as a lower bound (see Lemma \ref%
		{lemma:fnproperties}), one can see that we can find a constant $a_{0}>0$
		such that only controls in $A_{a_{0}}$ (or any $a<a_{0}$) should be
		considered. More specifically to see that a term in the sum appearing on the
		RHS of \eqref{no R} gets large when $\sum_{(x,y)\in \mathcal{Z}}\mu
		_{x}(i)q_{xy}(\boldsymbol{\mu }(i))$ gets small we bound the denominator by $%
		|\mathcal{Z}|$ times the biggest term and the nominator by the same term and
		then we use the fact that $F_{xy}(0)\geq \gamma_{xy}.$ For the other bound
		we use the superlinearity of $F_{xy}.$ Now by \cite[Proposition 1.1 in
		Chapter 3]{Bertsekas2012}, we have that this value function satisfies
		
		\begin{equation*}
		\bar{V}_{K}^{n}(\boldsymbol{m})=\inf_{\boldsymbol{q}\in A_{a_{0}}(%
			\boldsymbol{m})}\left\{ \frac{\sum_{(x,y)\in \mathcal{Z}%
			}m_{x}F_{xy}(q_{xy})+R(\boldsymbol{m})}{n\sum_{(x,y)\in \mathcal{Z}%
			}m_{x}q_{xy}}+\sum_{(\tilde{x},\tilde{y})\in \mathcal{Z}}\frac{m_{\tilde{x}%
			}q_{\tilde{x}\tilde{y}}}{\sum_{(x,y)\in \mathcal{Z}}m_{x}q_{xy}}\bar{V}%
		_{K}^{n}\left( \boldsymbol{m}+\frac{1}{n}\boldsymbol{v}_{\tilde{x}\tilde{y}%
		}\right) \right\} .
		\end{equation*}
		
		It then follows that $\bar{V}_{K}^{n}(\boldsymbol{m})$ satisfies the last
		display if and only if [with $\Delta _{xy}^{n}\bar{V}_{K}^{n}(\boldsymbol{m}%
		):= n\left( \bar{V}_{K}^{n}(\boldsymbol{m}+\frac{\boldsymbol{v}_{xy}}{n}%
		)-\bar{V}_{K}^{n}(\boldsymbol{m})\right) $] 
		\begin{equation*}
		\inf_{\boldsymbol{q} \in A_{a_{0}}(\boldsymbol{m})}\left\{ \sum_{(x,y)\in 
			\mathcal{Z}}m_{x}\left( q_{xy}\Delta _{xy}^{n}\bar{V}_{K}^{n}(\boldsymbol{m}%
		)+F_{xy}(q_{xy})\right) \right\} +R(\boldsymbol{m})=0.
		\end{equation*}
		Since $a_{0}$ can be chosen arbitrary small and the left side on the
		previous display is continuous with respect to $\boldsymbol{q},$ we get 
		\begin{equation*}
		\inf_{\boldsymbol{q}\in [0,\infty)^{|\mathcal{Z}|}}\left\{ \sum_{(x,y)\in 
			\mathcal{Z}}m_{x}\left( q_{xy}\Delta _{xy}^{n}\bar{V}_{K}^{n}(\boldsymbol{m}%
		)+F_{xy}(q_{xy})\right) \right\} +R(\boldsymbol{m})=0.
		\end{equation*}
		Then using the definition (\ref{eqn:defofHn}) this is the same as 
		\begin{equation*}
		H^{n}\left( \boldsymbol{m},\Delta ^{n}\bar{V}_{K}^{n}(\boldsymbol{m})\right)
		+R(\boldsymbol{m})=0,
		\end{equation*}%
		and we also have the boundary condition $\bar{V}_{K}^{n}(\boldsymbol{m})=0$
		for all $\boldsymbol{m}\in K.$
	\end{proof}
	
	\medskip
	
	\begin{proof}[Proof of Lemma \protect\ref{iff}]
		Let $\tilde{V}$ be a solution to (\ref{prelimeq}). We then have $H^{n}(%
		\boldsymbol{m},\Delta ^{n}\tilde{V}(\boldsymbol{m}))+R(\boldsymbol{m})=0. $
		Using Corollary \ref{cor:isaac} and the definition \eqref{def-cast}
		of $C^{\ast}$, this implies  
		\begin{equation*}
		\sup_{\boldsymbol{\ u}\in (0,\infty )^{|\mathcal{Z}|}}\left\{ \sum_{(x,y)\in 
			\mathcal{Z}}m_{x}\left( u_{xy}\left( 1-e^{-n\left( \tilde{V}(\boldsymbol{m}+%
			\frac{\boldsymbol{v}_{xy}}{n})-\tilde{V}(\boldsymbol{m})\right) }\right)
		-\gamma _{xy}C_{xy}\left( \frac{u_{xy}}{\gamma _{xy}}\right) \right)
		\right\} +R(\boldsymbol{m})=0.
		\end{equation*}%
		By making the substitution   $\tilde{W}  =   e^{-n \tilde{V}}$, we have 
		\begin{equation*}
		\sup_{\boldsymbol{\ u}\in (0,\infty )^{|\mathcal{Z}|}}\left\{ \sum_{(x,y)\in 
			\mathcal{Z}}m_{x}\left( u_{xy}\left( 1-\frac{\tilde{W}(\boldsymbol{m}+\frac{%
				\boldsymbol{v}_{xy}}{n})}{\tilde{W}(\boldsymbol{m})}\right) -\gamma
		_{xy}C_{xy}\left( \frac{u_{xy}}{\gamma _{xy}}\right) \right) \right\} +R(%
		\boldsymbol{m})=0,
		\end{equation*}%
		which is the same as (\ref{eqdef:Weq}).
	\end{proof}
	
	\begin{lemma}
		\label{martingale} Let $f:\mathcal{P}^{n}(\mathcal{X})\rightarrow \mathbb{R}%
		, $ $\boldsymbol{m}\in \mathcal{P}^{n}(\mathcal{X}),$ and $\boldsymbol{q}\in 
		\mathcal{A}_{b}^{n,|\mathcal{Z}|}$ be given, and let $\boldsymbol{\mu }$
		solve (\ref{eqdef:calTn}). Then 
		\begin{equation*}
		f(\boldsymbol{\mu }(t\wedge T_{K}))-f(\boldsymbol{\mu }(t^{\prime }\wedge
		T_{K}))-\int_{t^{\prime }\wedge T_{K}}^{t\wedge T_{K}}\sum_{(x,y)\in 
			\mathcal{Z}}\mu _{x}(s)q_{xy}(s)\Delta _{xy}^{n}f(\boldsymbol{\mu }(s))ds,
		\end{equation*}%
		is a martingale with respect to the filtration $\{\mathcal{F}_{t}\}.$ 
	\end{lemma}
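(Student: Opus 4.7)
This is a standard Dynkin-type martingale identity for the pure-jump controlled process $\boldsymbol{\mu}$ driven by Poisson random measures (PRMs), stopped at $T_K$. The plan is to (i) expand $f(\boldsymbol{\mu}(t))$ as a stochastic integral against the PRMs using the SDE representation \eqref{eqdef:calTn}, (ii) compensate and identify the drift, (iii) invoke the optional stopping theorem at $T_K$. Because $\boldsymbol{\mu}$ lives in the finite set $\mathcal{P}^n(\mathcal{X})$, the function $f$ is automatically bounded, and because $\boldsymbol{q}\in\mathcal{A}_b^{n,|\mathcal{Z}|}$ the rates $q_{xy}$ are uniformly bounded above and below away from zero; these boundedness properties will make every integrability issue trivial.

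First, since $\boldsymbol{\mu}$ has no continuous part and its jumps are exactly those of $\frac{1}{n}\boldsymbol{v}_{xy}\cdot \mathbf{1}_{[0,\mu_x(s-)q_{xy}(s)]}(r)$ against $N_{xy}^n$, a direct pathwise telescoping (equivalently the trivial It\^o formula on the finite state space $\mathcal{P}^n(\mathcal{X})$) gives
\[
f(\boldsymbol{\mu}(t)) - f(\boldsymbol{m}) = \sum_{(x,y)\in\mathcal{Z}} \int_{(0,t]}\!\!\int_{[0,\infty)} \bigl[f(\boldsymbol{\mu}(s-)+\tfrac{1}{n}\boldsymbol{v}_{xy}) - f(\boldsymbol{\mu}(s-))\bigr]\, \mathbf{1}_{[0,\mu_x(s-)q_{xy}(s)]}(r)\, N_{xy}^n(ds\,dr).
\]
The compensator of $N_{xy}^n$ is $n\,ds\,dr$, so writing $\tilde N_{xy}^n := N_{xy}^n - n\,ds\,dr$ and using the identity $n[f(\boldsymbol{m}+\frac{\boldsymbol{v}_{xy}}{n}) - f(\boldsymbol{m})] = \Delta_{xy}^n f(\boldsymbol{m})$, the drift contribution is exactly $\int_0^t\sum_{(x,y)}\mu_x(s)q_{xy}(s)\Delta_{xy}^n f(\boldsymbol{\mu}(s))\,ds$. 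Hence, setting
\[
M_t := f(\boldsymbol{\mu}(t)) - f(\boldsymbol{m}) - \int_0^t \sum_{(x,y)\in\mathcal{Z}} \mu_x(s) q_{xy}(s) \Delta_{xy}^n f(\boldsymbol{\mu}(s))\,ds,
\]
$M$ is an $\{\mathcal{F}_t\}$-local martingale represented as an integral against the martingale measures $\tilde N_{xy}^n$. The integrands $[f(\boldsymbol{\mu}(s-)+\tfrac{1}{n}\boldsymbol{v}_{xy}) - f(\boldsymbol{\mu}(s-))]\mathbf{1}_{[0,\mu_x(s-)q_{xy}(s)]}(r)$ are bounded uniformly in $(s,r,\omega)$ and supported on a set of finite PRM-mass on $[0,T]\times[0,\infty)$ for every $T<\infty$, so the standard $L^2$ isometry for PRM stochastic integrals upgrades $M$ from a local to a true $\{\mathcal{F}_t\}$-martingale.

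Finally, $T_K$ is an $\{\mathcal{F}_t\}$-stopping time, and $M_{\cdot\wedge T_K}$ remains a martingale by optional stopping; boundedness of $f$ and $\boldsymbol{q}$ together with finiteness of $\mathcal{P}^n(\mathcal{X})$ guarantee uniform integrability on any bounded interval, so the claim $\mathbb{E}[M_{t\wedge T_K}-M_{t'\wedge T_K}\mid \mathcal{F}_{t'}]=0$ follows. The main obstacle here is bookkeeping of the Poisson stochastic calculus and correctly matching the factor of $n$ from the PRM intensity against the factor of $\frac{1}{n}$ in the jump size (so that the compensator yields $\Delta_{xy}^n f$ rather than $n^{-1}\Delta_{xy}^n f$ or $n\,\Delta_{xy}^n f$); there is no analytic difficulty beyond that.
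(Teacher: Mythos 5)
Your proof is correct and takes the same route the paper gestures at: the paper simply declares this a classical consequence of It\^o's formula for jump SDEs and cites Ikeda--Watanabe, whereas you carry out the telescoping, compensation of the PRM (with intensity $n\,ds\,dr$ combining with the $1/n$ jump size to yield exactly $\Delta_{xy}^n f$), boundedness of the integrand to upgrade from local to true martingale, and optional stopping at $T_K$. No genuine difference in method, only in the level of detail supplied.
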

	
	This is a classical result, and the proof entails a suitable application of Ito's
	formula (see \cite[Chapter 2, Theorem 5.1]{MR1011252} for more details).
	
	\begin{lemma}
		\label%
		{expmartingale} Let $g:\mathcal{P}^{n}(\mathcal{X})\rightarrow (0,\infty ),$ 
		$\boldsymbol{m}\in \mathcal{P}^{n}(\mathcal{X}),$ and $\boldsymbol{u}\in 
		\mathcal{A}_{b}^{n,|\mathcal{Z}|}$ be given, and let $\boldsymbol{\mu }$
		solve (\ref{eqdef:calTn}). Then 
		\begin{equation}
		\frac{g(\boldsymbol{\mu }(t\wedge T_{K}))}{g(\boldsymbol{\mu }(t^{\prime
			}\wedge T_{K}))}\exp \left\{ -\int_{t^{\prime }\wedge T_{K}}^{t\wedge
			T_{K}}\sum_{(x,y)\in \mathcal{Z}}\mu _{x}(s)u_{xy}(s)\frac{\Delta _{xy}^{n}g(%
			\boldsymbol{\mu }(s))}{g(\boldsymbol{\mu }(s))}ds\right\}
		\end{equation}%
		is a martingale with respect to the filtration $\mathcal{F}_{t}.$
	\end{lemma}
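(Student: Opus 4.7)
The plan is to decompose $M(t):=\bigl[g(\boldsymbol{\mu}(t\wedge T_K))/g(\boldsymbol{\mu}(t'\wedge T_K))\bigr]\,e^{-A(t)}$, where
\[
A(t):= \int_{t'\wedge T_K}^{t\wedge T_K}\sum_{(x,y)\in \mathcal{Z}}\mu_{x}(s)u_{xy}(s)\frac{\Delta_{xy}^{n}g(\boldsymbol{\mu}(s))}{g(\boldsymbol{\mu}(s))}\,ds,
\]
into the product $M(t)=Y(t)B(t)$ with $Y(t):=g(\boldsymbol{\mu}(t\wedge T_K))/g(\boldsymbol{\mu}(t'\wedge T_K))$ a pure-jump semimartingale and $B(t):=e^{-A(t)}$ a continuous process of finite variation, and then apply the Itô product rule. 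Writing $\mathcal{G}_{u}f(\boldsymbol{m}):=\sum_{(x,y)\in\mathcal{Z}} m_{x}u_{xy}(t)\Delta_{xy}^{n}f(\boldsymbol{m})$ for the controlled generator acting on $f:\mathcal{P}^{n}(\mathcal{X})\to\mathbb{R}$, Lemma~\ref{martingale} applied to $f=g$ gives $dY(t)=\mathcal{G}_{u}g(\boldsymbol{\mu}(t))\,dt/g(\boldsymbol{\mu}(t'\wedge T_{K})) + dN(t)$, where $N$ is a local martingale and where all drift coefficients vanish on $\{t\geq T_{K}\}$.

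First I would compute $dB(t)=-B(t)\,\sum_{(x,y)}\mu_{x}u_{xy}\,\Delta_{xy}^{n}g(\boldsymbol{\mu})/g(\boldsymbol{\mu})\,dt$ and then, since $B$ is continuous of finite variation (so that $[Y,B]\equiv 0$), the product rule yields
\[
dM(t)=Y(t-)\,dB(t)+B(t-)\,dY(t)=B(t)\Bigl[\tfrac{\mathcal{G}_{u}g(\boldsymbol{\mu}(t))}{g(\boldsymbol{\mu}(t'\wedge T_{K}))}-Y(t)\sum_{(x,y)}\mu_{x}u_{xy}\tfrac{\Delta_{xy}^{n}g(\boldsymbol{\mu})}{g(\boldsymbol{\mu})}\Bigr]dt+B(t)\,dN(t).
\]
The key observation is that $Y(t)\,\sum_{(x,y)}\mu_{x}u_{xy}\,\Delta_{xy}^{n}g(\boldsymbol{\mu})/g(\boldsymbol{\mu}) = \mathcal{G}_{u}g(\boldsymbol{\mu}(t))/g(\boldsymbol{\mu}(t'\wedge T_{K}))$, so the two drift terms cancel exactly and $M$ is a local martingale (obviously constant after $T_{K}$, which handles the stopping cleanly).

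To upgrade the local martingale property to a genuine martingale, I would invoke the fact that $\mathcal{P}^{n}(\mathcal{X})$ is a \emph{finite} set and $g>0$ on it, so both $g$ and $1/g$, as well as the finitely many quantities $\Delta_{xy}^{n}g/g$, are bounded by fixed positive constants; since $\boldsymbol{u}\in\mathcal{A}_{b}^{n,|\mathcal{Z}|}$ is also uniformly bounded, the integrand in $A$ is bounded, so $|A(t)|\leq C(t-t')$ almost surely, making both $Y$ and $B$ uniformly bounded on any bounded time interval. Hence $M$ is bounded on bounded time intervals and the local martingale is a true martingale.

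The main obstacle, which is really a bookkeeping issue, is to verify the drift cancellation rigorously using the exact form of $\Delta_{xy}^{n}$ and the definition of $\mathcal{G}_{u}$; the integrability step is essentially automatic from the finiteness of the state space. An alternative, perhaps more conceptual, route would be to recognize $M$ as a Doléans--Dade exponential associated with the compensated Poisson random measures driving \eqref{eqdef:calTn}: at each jump of type $(x,y)$ the multiplicative factor $g(\boldsymbol{\mu}(s-)+\boldsymbol{v}_{xy}/n)/g(\boldsymbol{\mu}(s-))=1+\Delta_{xy}^{n}g(\boldsymbol{\mu}(s-))/[n\,g(\boldsymbol{\mu}(s-))]$ matches the $-1$-shifted jump that integrates against the compensator $n\mu_{x}u_{xy}\,ds$ to yield exactly the integrand in $A$, but the direct product-rule computation above is shorter.
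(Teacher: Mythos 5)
Your proof is correct, and it takes a genuinely different route from the paper's. The paper disposes of this lemma in a single sentence by citing the corollary on page~66 of Ethier--Kurtz \cite{N.Ethier1986}, which is exactly the exponential-martingale formula for a Markov process: for $g>0$ in the domain of the generator $\mathcal{G}$, the process $\tfrac{g(X_t)}{g(X_0)}\exp\bigl(-\int_0^t \tfrac{\mathcal{G}g(X_s)}{g(X_s)}\,ds\bigr)$ is a martingale under an integrability hypothesis. You instead derive this from scratch via the It\^{o} product rule applied to $Y(t)B(t)$, using Lemma~\ref{martingale} as the input Dynkin martingale. Your drift cancellation is right: $Y(t)\,\Delta_{xy}^n g(\boldsymbol{\mu}(t))/g(\boldsymbol{\mu}(t))$ collapses to $\Delta_{xy}^n g(\boldsymbol{\mu}(t))/g(\boldsymbol{\mu}(t'\wedge T_K))$ precisely because $g(\boldsymbol{\mu}(t))$ in the numerator of $Y$ cancels the denominator, and the replacement $Y(t-)\to Y(t)$ in the $dt$ integral is harmless since jump times are Lebesgue-null. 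Your passage from local martingale to true martingale is also legitimate and, importantly, this is exactly where the integrability condition of the Ethier--Kurtz corollary would have to be checked anyway; here it is trivial because $\mathcal{P}^n(\mathcal{X})$ is a finite set so $g$, $1/g$, $\Delta_{xy}^n g/g$ are uniformly bounded, and $\boldsymbol{u}\in\mathcal{A}_b^{n,|\mathcal{Z}|}$ is bounded above. Your side remark identifying $M$ as a Dol\'{e}ans--Dade exponential of the compensated jump measure is also accurate and is essentially the mechanism behind the Ethier--Kurtz result. In short: the paper's proof is a citation, yours is a self-contained computation; the trade-off is brevity versus transparency, and you have the better of the latter.
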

	
	\begin{proof}
		The proof is a direct application of the corollary in \cite[Page 66]%
		{N.Ethier1986}.
	\end{proof}

\begin{lemma}
\label{finitetime} Let $\boldsymbol{m}\in \mathcal{P}^{n}(\mathcal{X})$ and $%
\boldsymbol{\ u}\in \mathcal{A}_{b}^{n,|\mathcal{Z}|}.$ There exists a
constant $c>0$, that depends only on the bounds on $\boldsymbol{u},$ the
dimension $d$, the constant $R_{\max }=\max \{R(\boldsymbol{m}):\boldsymbol{m%
}\in \mathcal{P}^{n}(\mathcal{X})\},$ and the number $n$ of agents, such
that for every $t\geq t^{\prime }\geq 0,$ 
\begin{equation*}
\mathbb{E}_{\boldsymbol{m}}\left[ e^{-nR_{\max }(t\wedge T_{K}-t^{\prime
}\wedge T_{K})}\Big|\mathcal{F}_{t^{\prime }}\right] \geq c.
\end{equation*}%
Furthermore it is true that

\begin{equation*}
T_{K}<\infty \text{ a.s.},\hspace{8pt}\text{and}\hspace{8pt}\mathbb{E}_{%
\boldsymbol{m}}\left[ e^{-nR_{\max }(T_{K}-t^{\prime }\wedge T_{K})}\Big|%
\mathcal{F}_{t^{\prime }}\right] \geq c.
\end{equation*}
\end{lemma}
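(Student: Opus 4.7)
The plan is to reduce everything to a single uniform estimate: there exists a constant $p = p(n, d, u_{\min}, u_{\max}) > 0$, depending only on the dimension, the number of agents, and the uniform positive bounds $u_{\min}, u_{\max}$ on the admissible control $\boldsymbol{u} \in \mathcal{A}_b^{n,|\mathcal{Z}|}$, such that
\begin{equation*}
\mathbb{P}_{\boldsymbol{m}}(T_K \leq 1) \geq p \qquad \text{for every } \boldsymbol{m} \in \mathcal{P}^n(\mathcal{X}).
\end{equation*}
Once this is established, the Markov property at $t'$ together with a crude monotonicity bound delivers both conditional exponential estimates, and iteration at integer times yields $T_K < \infty$ almost surely.

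To prove the key estimate I would exploit the ergodicity of $\boldsymbol{\gamma}$: this forces the directed graph on $\mathcal{X}$ with edge set $\mathcal{Z}$ to be strongly connected, and by moving agents one at a time along $\mathcal{Z}$-paths it follows that the induced jump graph on the finite set $\mathcal{P}^n(\mathcal{X})$ is also strongly connected. Hence the graph-distance from any $\boldsymbol{m}$ to the non-empty set $K$ is bounded by some $L = L(n, d) < \infty$ independent of $\boldsymbol{m}$ and $\boldsymbol{u}$. Fixing such a path of length at most $L$, the rate of the required next transition at each intermediate state is at least $u_{\min}$ (at every state along the path there is at least one agent able to execute the corresponding jump), while the total jump rate is at most $n d u_{\max}$. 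A standard thinning/competing-exponential estimate then produces the required uniform $p$: the probability of following the chosen path within time $1$ exceeds a product of at most $L$ positive factors, each bounded below in terms of $n$, $d$, $u_{\min}$, and $u_{\max}$ alone.

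To conclude, observe that the difference $t \wedge T_K - t' \wedge T_K$ vanishes on $\{T_K \leq t'\}$ and is at most $T_K - t' \leq 1$ on $\{t' < T_K \leq t'+1\}$, so the integrand is at least $e^{-nR_{\max}}$ on the entire event $\{T_K \leq t'+1\}$. By the Markov property at $t'$ and the key estimate, $\mathbb{P}(T_K \leq t'+1 \mid \mathcal{F}_{t'}) \geq p$ almost surely, whence
\begin{equation*}
\mathbb{E}_{\boldsymbol{m}} \left[ e^{-nR_{\max}(t \wedge T_K - t' \wedge T_K)} \mid \mathcal{F}_{t'} \right] \geq e^{-nR_{\max}} \mathbb{P}(T_K \leq t'+1 \mid \mathcal{F}_{t'}) \geq e^{-nR_{\max}} p =: c.
\end{equation*}
Iterating the key estimate at integer times yields $\mathbb{P}(T_K > k) \leq (1-p)^k$ for each $k$, hence $T_K < \infty$ almost surely, and the last inequality in the lemma then follows from the identical computation with $t \wedge T_K$ replaced by the a.s.\ finite $T_K$.

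The main obstacle is the uniform-in-control lower bound on $\mathbb{P}_{\boldsymbol{m}}(T_K \leq 1)$: because admissible controls are only assumed predictable and uniformly bounded rather than Markovian, one cannot appeal to time-homogeneity and must argue directly from the competing-exponential structure of the Poisson-measure-driven jumps, verifying carefully that every constant entering the estimate depends only on $n$, $d$, $u_{\min}$, and $u_{\max}$. Everything downstream is a routine Markov-property and monotonicity computation.
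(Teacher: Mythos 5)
Your approach is genuinely different from the paper's, and it is essentially sound, though two technical points need to be handled with more care than the sketch suggests. The paper constructs an explicit Lyapunov function $g$ on $\mathcal{P}^n(\mathcal{X})$, a decreasing geometric function of the graph-distance to $K$, chosen so that $\sum_{(x,y)}\mu_x u_{xy}\,\Delta_{xy}^n g/g \geq n R_{\max}$ everywhere off $K$; plugging $g$ into the exponential martingale of Lemma \ref{expmartingale} then yields the first conditional inequality directly, with $c = \min g / \max g$. This works for an arbitrary predictable control $\boldsymbol{u}\in\mathcal{A}_b^{n,|\mathcal{Z}|}$ because the martingale identity holds pathwise for any such control, so the paper never needs a restart-at-$t'$ step; a.s.-finiteness of $T_K$ is then deduced afterwards. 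You reverse the order: you propose to prove a uniform reachability bound $\mathbb{P}_{\boldsymbol{m},\boldsymbol{u}}(T_K\le 1)\ge p$ first and derive everything from it. This is more elementary, but (a) the competing-exponential estimate must be carried out directly at the level of the Poisson random measures, since the control is adversarial, predictable and time-inhomogeneous, not a rate in a Markov chain (this is doable by thinning: the desired jump's PRM restricted to $r\in[0,u_{\min}/n)$ is a rate-$u_{\min}$ Poisson process independent of the processes dominating the competing jumps); and (b) the phrase "Markov property at $t'$" is imprecise because $\boldsymbol{\mu}$ is not Markov under a general admissible control; what you actually need is that the PRM increments after $t'$ are independent of $\mathcal{F}_{t'}$ and that the restriction of $\boldsymbol{u}$ to $(t',\infty)$ is again an admissible bounded predictable control, so the uniform bound transfers to the conditional one via a regular conditional probability argument. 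You flag (a) but not (b). Both routes reach the same conclusion; the paper's Lyapunov/martingale argument handles the non-Markov dynamics more cleanly since it never cuts and restarts the time axis, while yours is closer in spirit to a direct coupling/reachability argument and would generalize more readily to settings where no tractable Lyapunov function is available.
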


\begin{proof}
We claim there exists $g$ such that for all $s$

\begin{equation}
\sum_{(x,y)\in \mathcal{Z}}\mu _{x}(s)u_{xy}(s)\frac{\Delta _{xy}^{n}g(%
\boldsymbol{\mu }(s))}{g(\boldsymbol{\mu }(s))}\geq nR_{\max }.
\label{wfinitetime}
\end{equation}%
To show the existence of such a $g$ we use the following procedure. Since
the one agent process with generator given in (\ref{one agent process}%
) is ergodic, we have that the process on $\mathcal{X}^{n},$ with generator
given in (\ref{many agent process}), as well as the one on $\mathcal{P}%
^{n}(\mathcal{X}),$ with generator given in (\ref{mean field process}), are
also ergodic. We split $\mathcal{P}^{n}(\mathcal{X})$ into sets $%
\{K_{i}\}_{0\leq i\leq i_{\max }},$ where $K_{0}=K,$ and $K_{i+1}$ is
generated inductively as the set of all points in $\mathcal{P}^{n}(\mathcal{X%
})$ that do not belong to $K_{i}$ but such that the process with generator (%
\ref{mean field process}) can reach $K_{i}$ in one jump. Since the original
process has $d$ states, it is easy to see that $i_{\max }\leq d^{n}.$ Since$%
\boldsymbol{\ u}\in \mathcal{A}_{b}^{n,|\mathcal{Z}|}$, there exist
constants $0<c_{1}\leq c_{2}<\infty $ such that $c_{1}\leq u_{xy}(t)\leq
c_{2}$ for all $t\geq 0$ a.s. Let $g$ be defined by 
\begin{equation*}
g(\boldsymbol{m})\doteq \left( \frac{nR_{\max }+nd^{2}c_{2}+c_{1}}{c_{1}}%
\right) ^{i_{\max }-i},\hspace{8pt}\text{for}\hspace{8pt}\boldsymbol{m}\in
K_{i}.
\end{equation*}%
Let $\boldsymbol{\mu }(\cdot )$ be the process with control $\boldsymbol{u}$%
. For $0\leq s\leq t$ suppose that $\boldsymbol{\mu }(s)\in K_{i}$ for some $%
i\geq 1.$ Then there exists at least one $(\tilde{x},\tilde{y})\in \mathcal{Z%
}$ such that $\boldsymbol{\mu }(s)+\frac{v_{\tilde{x}\tilde{y}}}{n}\in
K_{i-1}.$ Therefore 
\begin{equation*}
\begin{split}
& \sum_{(x,y)\in \mathcal{Z}}\mu _{x}(s)u_{xy}(s)\frac{\Delta _{xy}^{n}g(%
\boldsymbol{\mu }(s))}{g(\boldsymbol{\mu }(s))}=\mu _{\tilde{x}}(s)u_{\tilde{%
x}\tilde{y}}(s)\frac{\Delta _{\tilde{x}\tilde{y}}^{n}g(\boldsymbol{\mu }(s))%
}{g(\boldsymbol{\mu }(s))}+n\sum_{(x,y)\in \mathcal{Z},(x,y)\neq \left( 
\tilde{x},\tilde{y}\right) }\frac{g(\boldsymbol{\mu }(s)+\frac{\boldsymbol{v}%
_{xy}}{n})}{g(\boldsymbol{\mu }(s))}\mu _{x}(s)u_{xy}(s) \\
& -n\sum_{(x,y)\in \mathcal{Z},(x,y)\neq \left( \tilde{x},\tilde{y}\right) }%
\frac{g(\boldsymbol{\mu }(s))}{g(\boldsymbol{\mu }(s))}\mu
_{x}(s)u_{xy}(s)\geq \mu _{\tilde{x}}(s)u_{\tilde{x}\tilde{y}}(s)\frac{%
\Delta _{\tilde{x}\tilde{y}}^{n}g(\boldsymbol{\mu }(s))}{g(\boldsymbol{\mu }%
(s))}-n\sum_{(x,y)\in \mathcal{Z}}\mu _{x}(s)u_{xy}(s) \\
& \geq c_{1}\left( \frac{nR_{\max }+nd^{2}c_{2}+c_{1}}{c_{1}}-1\right)
-nc_{2}d^{2}\geq nR_{\max },
\end{split}
\end{equation*}
where in the next to last inequality we used the fact that $\mu _{\tilde{x}%
}(s)\geq \frac{1}{n}$ (because otherwise there is no agent at $\tilde{x}$
to move), and that $\Delta _{xy}^{n}V(\boldsymbol{m})=n\left( V(\boldsymbol{m%
}+\frac{\boldsymbol{v}_{xy}}{n})-V(\boldsymbol{m})\right) $.

Using Lemma \ref{expmartingale}, we have 

\begin{equation*}
\mathbb{E}_{\boldsymbol{m}}\left[ \frac{g(\boldsymbol{\mu }(t\wedge T_{K}))}{%
g(\boldsymbol{\mu }(t^{\prime }\wedge T_{K}))}\exp \left\{ -\int_{t^{\prime
}\wedge T_{K}}^{t\wedge T_{K}}\sum_{(x,y)\in \mathcal{Z}}\mu _{x}(s)u_{xy}(s)%
\frac{\Delta _{xy}^{n}g(\boldsymbol{\mu }(s))}{g(\boldsymbol{\mu }(s))}%
ds\right\} \Bigg|\mathcal{F}_{t^{\prime }}\right] =1,
\end{equation*}%
from which we get%
\begin{equation*}
\mathbb{E}_{\boldsymbol{m}}\left[ \exp \left\{ -\int_{t^{\prime }\wedge
T_{K}}^{t\wedge T_{K}}\sum_{(x,y)\in \mathcal{Z}}\mu _{x}(s)u_{xy}(s)\frac{%
\Delta _{xy}^{n}g(\boldsymbol{\mu }(s))}{g(\boldsymbol{\mu }(s))}ds\right\} %
\Bigg|\mathcal{F}_{t^{\prime }}\right] \geq c\doteq \frac{\min_{\mathcal{P}%
^{n}(\mathcal{X})}g}{\max_{\mathcal{P}^{n}(\mathcal{X})}g}.
\end{equation*}%
By applying equation (\ref{wfinitetime}) 
\begin{equation*}
\mathbb{E}_{\boldsymbol{m}}\left[ e^{-nR_{\max }(t\wedge T_{K}-t^{\prime
}\wedge T_{K})}\Big|\mathcal{F}_{t^{\prime }}\right] \geq c.
\end{equation*}

Now choose now $\tau >0$ such that $e^{-nR_{\max }\tau }\leq c/2$. We claim
that 
\begin{equation*}
\left( T_{K}\leq t^{\prime }+\tau \right) \Leftrightarrow \left( T_{K}\wedge
(t^{\prime }+2\tau )-t^{\prime }\wedge T_{K}\right) \leq \tau .
\end{equation*}%
Indeed if $t^{\prime }\geq T_{K},$ then both parts are trivially true. Let
assume that $t^{\prime }\leq T_{K},$ and $T_{K}\leq t^{\prime }+\tau .$ Then 
$T_{K}\wedge (t^{\prime }+2\tau )=T_{K},$ and $t^{\prime }\wedge
T_{K}=t^{\prime },$ and therefore $\left( T_{K}\wedge (t^{\prime }+2\tau
)-t^{\prime }\wedge T_{K}\right) =T_{K}-t^{\prime }\leq \tau .$ If on the
other hand $t^{\prime }\leq T_{K}$ and $\left( T_{K}\wedge (t^{\prime
}+2\tau )-t^{\prime }\wedge T_{K}\right) \leq \tau ,$ we get $\left(
T_{K}\wedge (t^{\prime }+2\tau )\right) \leq \tau +t^{\prime },$ which gives
that $T_{K}\leq (t^{\prime }+2\tau ),$ and therefore $T_{K}=\left(
T_{K}\wedge (t^{\prime }+2\tau )\right) \leq t^{\prime }+\tau $. Using the
claim just proved gives 
\begin{eqnarray*}
\mathbb{P}_{\boldsymbol{m}}(T_{K}\leq t^{\prime }+\tau |\mathcal{F}%
_{t^{\prime }}\!) \!=\!\mathbb{P}_{\boldsymbol{m}}\!(T_{K}\wedge (t^{\prime
}+2\tau )-t^{\prime }\wedge T_{K}\leq \tau |\mathcal{F}_{t^{\prime }}\!)
\!=\!\mathbb{P}_{\boldsymbol{m}}\!\!\left(\! e^{-nR_{\max }\left( T_{K}\wedge
(t^{\prime }+2\tau )-t^{\prime }\wedge T_{K}\right) }\!\geq\! e^{-nR_{\max}\tau
}|\mathcal{F}_{t^{\prime }}\!\right)\!\! .
\end{eqnarray*}%
Let $E_{1}\doteq \{e^{-nR_{\max }\left( T_{K}\wedge (t^{\prime }+2\tau
)-t^{\prime }\wedge T_{K}\right) }\geq e^{-nR_{\max }\tau }\}$ and $%
E_{2}\doteq E_{1}^{c}$. Then since $T_{K}\wedge (t^{\prime }+2\tau
)-t^{\prime }\wedge T_{K}\geq 0$ 
\begin{equation*}
\begin{split}
& \mathbb{E}_{\boldsymbol{m}}\left[ e^{-nR_{\max }\left( T_{K}\wedge
(t^{\prime }+2\tau )-t^{\prime }\wedge T_{K}\right) }\Big|\mathcal{F}%
_{t^{\prime }}\right] =\mathbb{E}_{\boldsymbol{m}}\left[ 1_{E_{1}}e^{-nR_{%
\max }^{n}\left( T_{K}\wedge (t^{\prime }+2\tau )-t^{\prime }\wedge
T_{K}\right) }\Big|\mathcal{F}_{t^{\prime }}\right] \\
& \quad \quad \quad +\mathbb{E}_{\boldsymbol{m}}\left[ 1_{E_{2}}e^{-nR_{\max
}^{n}\left( T_{K}\wedge (t^{\prime }+2\tau )-t^{\prime }\wedge T_{K}\right) }%
\Big|\mathcal{F}_{t^{\prime }}\right] \leq \mathbb{E}_{\boldsymbol{m}}\left[
1_{E_{1}}\Big|\mathcal{F}_{t^{\prime }}\right] +e^{-R_{\max }\tau }.
\end{split}%
\end{equation*}%
From this, the first part of the lemma and $e^{-nR_{\max }\tau }\leq c/2$,
we get 
\begin{equation*}
\begin{split}
 \mathbb{P}_{\boldsymbol{m}}\left( e^{-nR_{\max }\left( T_{K}\wedge
(t^{\prime }+2\tau )-t^{\prime }\wedge T_{K}\right) }\geq e^{-nR_{\max
}^{n}\tau }|\mathcal{F}_{t^{\prime }}\right)  \geq \mathbb{E}_{\boldsymbol{m}}\left[ e^{-nR_{\max }\left( T_{K}\wedge
(t^{\prime }+2\tau )-t^{\prime }\wedge T_{K}\right) }\Big|\mathcal{F}%
_{t^{\prime }}\right] -e^{-nR_{\max }\tau }\geq \frac{c}{2}.
\end{split}%
\end{equation*}
Now we have 
\vspace{-12pt}
\begin{equation*}
\begin{split}
\mathbb{P}_{\boldsymbol{m}}(T_{K}=\infty)&=\lim_{k\rightarrow\infty}\mathbb{P%
}_{\boldsymbol{m}}(T_{K}>k\tau)=\mathbb{P}_{\boldsymbol{m}%
}\left(T_{K}>0\right)\lim_{k\rightarrow\infty}\prod^{k}_{k^{\prime
}=0}\left(1-\mathbb{P}_{\boldsymbol{m}}\left(T_{K}\leq(k^{\prime }+1)\tau
\,|\, T_{K}>k^{\prime }\tau\right)\right) \\
&\leq \lim_{k\rightarrow\infty}\left(1-\frac{c}{2}\right)^k=0,
\end{split}%
\end{equation*}
where in the second inequality we iteratively used the formula for
conditional probability. The remaining inequality is just an application of
the monotone convergence theorem.
\end{proof}

\begin{lemma}
\label{eventuallydecreasingcontrols} Given $\boldsymbol{m}\in \mathcal{P}%
^{n}(\mathcal{X}),\epsilon >0$ and $\boldsymbol{u}\in \mathcal{A}_{b}^{n,|%
\mathcal{Z}|}$ with 
\begin{equation*}
\mathbb{E}_{\boldsymbol{m}}\left[ e^{n\int_{0}^{T_{K}}\left( \sum_{(x,y)\in 
\mathcal{Z}}\boldsymbol{\mu }_{x}(t)C_{xy}\left( \frac{u_{xy}(t)}{\gamma
_{xy}}\right) -R(\boldsymbol{\mu }(t))\right) dt}\right] <\infty ,
\end{equation*}%
there exists $\tilde{\boldsymbol{u}}\in \mathcal{A}_{b}^{n,|\mathcal{Z}|}$
and $\tau <\infty ,$ such that 
\begin{equation*}
\sum_{(x,y)\in \mathcal{Z}}\tilde{\mu}_{x}(t)\gamma _{xy}C_{xy}\left( \frac{%
\tilde{u}_{xy}(t)}{\gamma _{xy}}\right) -R(\tilde{\boldsymbol{\mu }}(t))\leq
0\hspace{8pt}\text{for every}\hspace{8pt} t>\tau,\hspace{16pt} \text{and} \hspace{16pt}
I_{K}^{n}(\boldsymbol{m},\tilde{\boldsymbol{u}})\leq I_{K}^{n}(\boldsymbol{m}%
,\boldsymbol{u})+\epsilon .
\end{equation*}
\end{lemma}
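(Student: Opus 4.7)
The plan is to use a ``switch to nominal'' modification: define $\tilde{\boldsymbol{u}}(t) := \boldsymbol{u}(t)$ for $t \le \tau$ and $\tilde{\boldsymbol{u}}(t) := \boldsymbol{\gamma}$ for $t > \tau$. Since $\tilde{\boldsymbol{u}}$ agrees with $\boldsymbol{u}$ on $[0,\tau]$ and is driven by the same PRMs, the modified controlled process $\tilde{\boldsymbol{\mu}}$ coincides with $\boldsymbol{\mu}$ on $[0,\tau\wedge T_K]$. Crucially, because $C_{xy}(1)=0$ under Assumption \ref{assumption}, for $t>\tau$ the integrand in the modified problem is exactly $-R(\tilde{\boldsymbol{\mu}}(t))\le 0$, which gives the first required conclusion. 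It remains to show that $\tau$ can be chosen so that the cost increases by at most $\epsilon$.

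Write $h(s) := \sum_{(x,y)\in\mathcal{Z}}\mu_x(s)\gamma_{xy}C_{xy}(u_{xy}(s)/\gamma_{xy}) - R(\boldsymbol{\mu}(s))$ and $A(t) := n\int_0^{t\wedge T_K}h(s)\,ds$, so that $I_K^n(\boldsymbol{m},\boldsymbol{u}) = \mathbb{E}[e^{A_\infty}]$ with $A_\infty := A(T_K)$. Splitting on $\{T_K\le\tau\}$ and $\{T_K>\tau\}$, and using the non-positivity of the post-$\tau$ integrand in the modified problem, I would bound
\begin{equation*}
I_K^n(\boldsymbol{m},\tilde{\boldsymbol{u}}) \le \mathbb{E}\bigl[\mathbf{1}_{T_K\le\tau}e^{A_\infty}\bigr] + \mathbb{E}\bigl[\mathbf{1}_{T_K>\tau}e^{A(\tau)}\bigr].
\end{equation*}
The first term converges to $\mathbb{E}[e^{A_\infty}]=I_K^n(\boldsymbol{m},\boldsymbol{u})$ as $\tau\to\infty$ by dominated convergence using $T_K<\infty$ a.s.\ (Lemma \ref{finitetime}) and the hypothesis $\mathbb{E}[e^{A_\infty}]<\infty$. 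The main step is to show that the second term vanishes.

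For this I would exploit Lemma \ref{finitetime} combined with the a.s.\ lower bound $h(s)\ge -R_{\max}$, which follows from $C_{xy}\ge 0$ and $R\le R_{\max}$. On $\{T_K>\tau\}$ we then have $n\int_\tau^{T_K}h(s)\,ds \ge -nR_{\max}(T_K-\tau)$, so
\begin{equation*}
\mathbb{E}\bigl[e^{A_\infty - A(\tau)}\,\big|\,\mathcal{F}_\tau\bigr]\mathbf{1}_{T_K>\tau} \ge \mathbb{E}\bigl[e^{-nR_{\max}(T_K-\tau\wedge T_K)}\,\big|\,\mathcal{F}_\tau\bigr]\mathbf{1}_{T_K>\tau} \ge c\,\mathbf{1}_{T_K>\tau},
\end{equation*}
where $c>0$ is the constant from Lemma \ref{finitetime} applied to the control $\boldsymbol{u}$. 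Conditioning on $\mathcal{F}_\tau$ then yields
\begin{equation*}
\mathbb{E}\bigl[\mathbf{1}_{T_K>\tau}e^{A_\infty}\bigr] = \mathbb{E}\bigl[\mathbf{1}_{T_K>\tau}e^{A(\tau)}\,\mathbb{E}[e^{A_\infty-A(\tau)}\mid\mathcal{F}_\tau]\bigr] \ge c\,\mathbb{E}\bigl[\mathbf{1}_{T_K>\tau}e^{A(\tau)}\bigr],
\end{equation*}
so that $\mathbb{E}[\mathbf{1}_{T_K>\tau}e^{A(\tau)}] \le c^{-1}\mathbb{E}[\mathbf{1}_{T_K>\tau}e^{A_\infty}]$, which tends to zero as $\tau\to\infty$ by dominated convergence (using $\mathbb{E}[e^{A_\infty}]<\infty$ and $T_K<\infty$ a.s.). Choosing $\tau$ large enough to make the sum of the two terms at most $I_K^n(\boldsymbol{m},\boldsymbol{u})+\epsilon$ completes the argument; admissibility $\tilde{\boldsymbol{u}}\in\mathcal{A}_b^{n,|\mathcal{Z}|}$ is immediate since we switched between two uniformly bounded-below-and-above rates. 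The main obstacle is the one just handled: obtaining a uniform lower bound on the conditional expectation $\mathbb{E}[e^{A_\infty-A(\tau)}\mid\mathcal{F}_\tau]$ on $\{T_K>\tau\}$, which is precisely what Lemma \ref{finitetime} provides.
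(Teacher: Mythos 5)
Your proof is correct and follows essentially the same route as the paper's: the same ``switch to nominal'' truncation of the control at a deterministic time $\tau$, the same use of $C_{xy}(1)=0$ for the post-$\tau$ sign condition, the same split on $\{T_K\le\tau\}$ versus $\{T_K>\tau\}$, and the same invocation of Lemma \ref{finitetime} to lower-bound the conditional factor $\mathbb{E}[e^{A_\infty-A(\tau)}\mid\mathcal{F}_\tau]$ by $c$ on $\{T_K>\tau\}$. The only cosmetic difference is that the paper fixes $\tau$ up front so that $\mathbb{E}[\mathbf{1}_{T_K\ge\tau}e^{A_\infty}]\le\epsilon c$, whereas you phrase the same estimate asymptotically and then pick $\tau$; the ingredients and the key inequality are identical.
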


\begin{proof}
Let such $\boldsymbol{m}\in \mathcal{P}^{n}(\mathcal{X}),\epsilon >0,$ and $%
\boldsymbol{u}\in \mathcal{A}_{b}^{n,|\mathcal{Z}|}$ be given, and let $c>0$
from Lemma \ref{finitetime} be such that 
\begin{equation}
\mathbb{E}_{\boldsymbol{m}}\left[ e^{nR_{\max }(T_{K}-t^{\prime }\wedge
T_{K})}\Big|\mathcal{F}_{t^{\prime }}\right] \geq c  \label{eqn:TLB}
\end{equation}%
for $t^{\prime }\in \lbrack 0,\infty )$. Since by Lemma \ref{finitetime} $%
T_{K}$ is finite a.s., we can find $\tau <\infty $ such that 
\begin{equation*}
\mathbb{E}_{\boldsymbol{m}}\left[ I_{\{T_{K}\geq \tau
\}}e^{n\int_{0}^{T_{K}}\left( \sum_{(x,y)\in \mathcal{Z}}\mu _{x}(t)\gamma
_{xy}C_{xy}\left( \frac{u_{xy}(t)}{\gamma _{xy}}\right) -R(\boldsymbol{\mu }%
(t))\right) dt}\right] \leq \epsilon c.
\end{equation*}

Now set $\tilde{\boldsymbol{u}}(t)=\boldsymbol{u}(t)$ for $t\leq \tau ,$ and 
$\tilde{\boldsymbol{u}}(t)=\boldsymbol{\gamma }$ so that $C_{xy}\left( 
\tilde{u}_{xy}(t)/\gamma _{xy}\right) =0$ for $t\geq \tau $. Let $\tilde{%
\boldsymbol{\mu }}$ and $\tilde{T}_{K}$ be the corresponding controlled
process and stopping time. Then the first claim of the lemma follows. The
remaining claim follows from the following display, where the first
inequality uses again that $C_{xy}\left( 1\right) =0$, the following
equality uses that $(\tilde{\boldsymbol{u}},\tilde{\boldsymbol{\mu }},\tilde{%
T}_{K})$ had the same distribution as the original versions up till time $%
\tau $, and the second inequality uses (\ref{eqn:TLB}): 
\begin{align*}
I_{K}^{n}(\boldsymbol{m},\tilde{\boldsymbol{u}})& =\mathbb{E}_{\boldsymbol{m}%
}\left[ e^{n\int_{0}^{T_{K}}\left( \sum_{(x,y)\in \mathcal{Z}}\tilde{\mu}%
_{x}(t)\gamma _{xy}C_{xy}\left( \frac{\tilde{u}_{xy}(t)}{\gamma _{xy}}%
\right) -R(\tilde{\boldsymbol{\mu }}(t))\right) dt}\right] \\
& \leq \mathbb{E}_{\boldsymbol{m}}\left[ I_{\{T_{K}\leq \tau
\}}e^{n\int_{0}^{T_{K}}\left( \sum_{(x,y)\in \mathcal{Z}}\tilde{\mu}%
_{x}(t)\gamma _{xy}C_{xy}\left( \frac{\tilde{u}_{xy}(t)}{\gamma _{xy}}%
\right) -R(\tilde{\boldsymbol{\mu }}(t))\right) dt}\right] \\
& \quad +\mathbb{E}_{\boldsymbol{m}}\left[ I_{\{T_{K}\geq \tau
\}}e^{n\int_{0}^{T_{K}\wedge \tau }\left( \sum_{(x,y)\in \mathcal{Z}}\tilde{%
\mu}_{x}(t)\gamma _{xy}C_{xy}\left( \frac{\tilde{u}_{xy}(t)}{\gamma _{xy}}%
\right) -R(\tilde{\boldsymbol{\mu }}(t))\right) dt}\right] \\
& =\mathbb{E}_{\boldsymbol{m}}\left[ I_{\{T_{K}\leq \tau
\}}e^{n\int_{0}^{T_{K}}\left( \sum_{(x,y)\in \mathcal{Z}}\mu _{x}(t)\gamma
_{xy}C_{xy}\left( \frac{u_{xy}(t)}{\gamma _{xy}}\right) -R(\boldsymbol{\mu }%
(t))\right) dt}\right] \\
& \quad +\mathbb{E}_{\boldsymbol{m}}\Bigg[I_{\{T_{K}\geq \tau
\}}e^{n\int_{0}^{T_{K}\wedge \tau }\left( \sum_{(x,y)\in \mathcal{Z}}\mu
_{x}(t)\gamma _{xy}C_{xy}\left( \frac{u_{xy}(t)}{\gamma _{xy}}\right) -R(%
\boldsymbol{\mu }(t))\right) dt} \\
& \hspace{120pt}\times \frac{\mathbb{E}_{\boldsymbol{m}}\left[
e^{n\int_{T_{K}\wedge {\tau }}^{T_{K}}\left( \sum_{(x,y)\in \mathcal{Z}}{\mu 
}_{x}(t)\gamma _{xy}C_{xy}\left( \frac{u_{xy}(t)}{\gamma _{xy}}\right) -R({%
\boldsymbol{\mu }}(t))\right) dt}\Bigg|\mathcal{F}_{\tau }\right] }{\mathbb{E%
}_{\boldsymbol{m}}\left[ e^{n\int_{T_{K}\wedge {\tau }}^{T_{K}}\left(
\sum_{(x,y)\in \mathcal{Z}}{\mu }_{x}(t)\gamma _{xy}C_{xy}\left( \frac{%
u_{xy}(t)}{\gamma _{xy}}\right) -R({\boldsymbol{\mu }}(t))\right) dt}\Bigg|%
\mathcal{F}_{\tau }\right] }\Bigg] \\
& \leq \mathbb{E}_{\boldsymbol{m}}\left[ e^{n\int_{0}^{T_{K}}\left(
\sum_{(x,y)\in \mathcal{Z}}\mu _{x}(t)C_{xy}\left( \frac{u_{xy}(t)}{\gamma
_{xy}}\right) -R(\boldsymbol{\mu }(t))\right) dt}\right] \\
& \quad +\frac{1}{c}\mathbb{E}_{\boldsymbol{m}}\Bigg[I_{\{T_{K}\geq \tau
\}}e^{n\int_{0}^{T_{K}}\left( \sum_{(x,y)\in \mathcal{Z}}\mu _{x}(t)\gamma
_{xy}C_{xy}\left( \frac{u_{xy}(t)}{\gamma _{xy}}\right) -R(\boldsymbol{\mu }%
(t))\right) dt}\Bigg]  \leq I_{K}^{n}(\boldsymbol{m},\boldsymbol{u})+\epsilon .
\end{align*}
\end{proof}

\medskip

\begin{proof}[Proof of Theorem \protect\ref{Main A}]
We are first going to prove that $V_{K}^{n}$ is the unique solution to (\ref%
{prelimeq}). We will prove that, by showing that if $\tilde{V}$ is any solution to (\ref{prelimeq}), then it has to coincide with $V_{K}^{n}.$ Let $\tilde{V}$ be any solution to (\ref{prelimeq}), and let $%
\boldsymbol{m}\in \mathcal{P}(\mathcal{X}).$ Let also $\boldsymbol{q}\in 
\mathcal{A}_{b}^{n,|\mathcal{Z}|}$ be given and let $\boldsymbol{\mu }$
solve (\ref{eqdef:calTn}). By Lemma \ref{martingale}, 
\begin{equation*}
\tilde{V}(\boldsymbol{\mu }(t\wedge T_{K}))-\tilde{V}(\boldsymbol{m}%
)-\int_{0}^{t\wedge {T_{K}}}\sum_{(x,y)\in \mathcal{Z}}\mu
_{x}(s)q_{xy}(s)\Delta ^{n}\tilde{V}(\boldsymbol{\mu }(s))ds
\end{equation*}%
is a martingale. Taking expectation gives%
\begin{equation*}
\mathbb{E}_{\boldsymbol{m}}\left[ \tilde{V}(\boldsymbol{\mu }(t\wedge T_{K}))%
\right] -\mathbb{E}_{\boldsymbol{m}}\left[ \int_{0}^{t\wedge {T_{K}}%
}\sum_{(x,y)\in \mathcal{Z}}\mu _{x}(s)q_{xy}(s)\Delta ^{n}\tilde{V}(%
\boldsymbol{\mu }(s))ds\right] =\tilde{V}(\boldsymbol{m}),
\end{equation*}%
and since $\tilde{V}$ is a solution to (\ref{prelimeq}) and by (\ref%
{eqn:defofHn}),%
\begin{equation*}
\mathbb{E}_{\boldsymbol{m}}\left[ \tilde{V}(\boldsymbol{\mu }(t\wedge T_{K}))%
\right] +\mathbb{E}_{\boldsymbol{m}}\left[ \int_{0}^{t\wedge {T_{K}}}\left(
\sum_{(x,y)\in \mathcal{Z}}\mu _{x}(s)F_{xy}(q_{xy}(s))+R(\boldsymbol{\mu }%
(s))\right) ds\right] \geq \tilde{V}(\boldsymbol{m}).
\end{equation*}%
By Lemma \ref{finitetime}, $\,T_{K}<\infty $ almost surely. Letting $%
t\rightarrow \infty $, Lemma \ref{lemma:fnproperties} and the monotone
convergence theorem imply 
\begin{equation*}
J_{K}^{n}(\boldsymbol{m},\boldsymbol{q})=\mathbb{E}_{\boldsymbol{m}}\left[
\int_{0}^{T_{K}}\sum_{(x,y)\in \mathcal{Z}}\mu _{x}(t)F_{xy}(q_{xy}(s))+R(%
\boldsymbol{\mu }(s))ds\right] \geq \tilde{V}(\boldsymbol{m}).
\end{equation*}%
Since $\boldsymbol{q}\in \mathcal{A}_{b}^{n,|\mathcal{Z}|}$ was arbitrary we
get $V_{K}^{n}(\boldsymbol{m})\geq \tilde{V}(\boldsymbol{m}).$ We will now prove the opposite inequality. Let $\epsilon>0.$ For $\boldsymbol{m}\in \mathcal{P}^{n}(\mathcal{X}),$ we can find $\bar{ \boldsymbol{q}}(%
\boldsymbol{m})$ that satisfies  
\begin{equation}
\sum_{(x,y)\in \mathcal{Z}}\!\!\left(\! \bar{q}_{xy}(\boldsymbol{m})n\left(\! \tilde{%
V}\left( \boldsymbol{m}{+}\frac{1}{n}v_{xy}\right) -\tilde{V}(\boldsymbol{m}%
))\right) +m_{x}F_{xy}(\bar{q}_{xy}(\boldsymbol{m}))\right) +R(\boldsymbol{m}%
)\leq \epsilon\hspace{-8pt} \sum_{(x,y)\in \mathcal{Z}} m_{x}F_{xy}(\bar{q}%
_{xy}(\boldsymbol{m})).  \label{eoptimal}
\end{equation}

To see that such a $\bar{\boldsymbol{q}}(\boldsymbol{m})$ exists and it is
actually bounded away from zero, we take a minimizing sequence $\bar{\boldsymbol{q}}_{n}(\boldsymbol{m})$ in the definition of $H\left( \boldsymbol{m},\Delta ^{n}\tilde{V}(\boldsymbol{m})\right)$ (see (\ref{eqn:defofHn})).  By using the continuity of the function $\sum_{(x,y)\in \mathcal{Z}}\!\!\left(\! \bar{q}_{xy}(\boldsymbol{m})n\left(\! \tilde{%
	V}\left( \boldsymbol{m}{+}\frac{1}{n}v_{xy}\right) -\tilde{V}(\boldsymbol{m}%
))\right) +m_{x}F_{xy}(\bar{q}_{xy}(\boldsymbol{m}))\right) $ with respect to $\bar{\boldsymbol{q}}_{n}(\boldsymbol{m}),$ we can assume that all $q_{xy,n}$ are strictly positive. Furthermore, with no loss of
generality we can assume that the sequence is converging. If all elements
converge to the original rates, by recalling \eqref{prelimeq}, we notice that we can just take those and the inequality is
satisfied trivially. If on the other hand it converges to different values
the right hand will be always bounded away from zero while the left hand
will converge to zero by \eqref{prelimeq}, therefore for sufficiently large value of $n,$ we
will recover the desired control. We can construct a solution to (\ref%
{eqdef:calTn}) with $\boldsymbol{u}$ replaced by the feedback control $\bar{%
\boldsymbol{q}}(\boldsymbol{\mu })$, and then obtain $\hat{\boldsymbol{q}}%
\in \mathcal{A}_{b}^{|\mathcal{Z}|}$ by setting $\hat{\boldsymbol{q}}(t)=%
\bar{\boldsymbol{q}}(\boldsymbol{\mu }(t)).$
Then%
\begin{equation*}
\mathbb{E}_{\boldsymbol{m}}\left[ \tilde{V}(\boldsymbol{\mu }(t\wedge T_{K}))%
\right] -\mathbb{E}_{\boldsymbol{m}}\left[ \int_{0}^{t\wedge {T_{K}}%
}\sum_{(x,y)\in \mathcal{Z}}\mu _{x}(s)\bar{q}_{xy}(\boldsymbol{\mu }%
(s))\Delta ^{n}\tilde{V}(\boldsymbol{\mu }(s))ds\right] =\tilde{V}(%
\boldsymbol{m}),
\end{equation*}%
and therefore by (\ref{eoptimal})%
\begin{equation*}
\mathbb{E}_{\boldsymbol{m}}\left[ \tilde{V}(\boldsymbol{\mu }(t\wedge T_{K}))%
\right] +\mathbb{E}_{\boldsymbol{m}}\left[ \int_{0}^{t\wedge
T_{K}}\left((1-\epsilon) \sum_{(x,y)\in \mathcal{Z}}\mu _{x}(t)F^{n}(\bar{q}%
_{xy}(\boldsymbol{\mu }(s)))+R(\boldsymbol{\mu }(s))\right) ds\right] \leq 
\tilde{V}(\boldsymbol{m}).
\end{equation*}%
Again using Lemma \ref{finitetime} and the monotone convergence theorem
gives 
\begin{equation*}
(1-\epsilon )\mathbb{E}_{\boldsymbol{m}}\left[ \int_{0}^{{T_{K}}}\left(
\sum_{(x,y)\in \mathcal{Z}}\mu _{x}(t)F^{n}(\bar{q}_{xy}(\boldsymbol{\mu }%
(s)))+R(\boldsymbol{\mu }(s))\right) ds\right] \leq \tilde{V}(\boldsymbol{m}%
),
\end{equation*}%
and therefore $V_{K}^{n}(\boldsymbol{m})\leq J_{K}^{n}(\boldsymbol{m},\hat{%
\boldsymbol{q}})\leq \frac{1}{1-\epsilon }\tilde{V}(\boldsymbol{m}).$ Since $%
\epsilon $ is arbitrary we get $V_{K}^{n}(\boldsymbol{m})=\tilde{V}(%
\boldsymbol{m})$, which implies the uniqueness of $\tilde{V}$. We now
proceed with the proof that $W_{K}^{n}$ is the unique solution to 
\begin{equation}
\sup_{\boldsymbol{\ u}\in (0,\infty )^{|\mathcal{Z}|}}\left\{ \sum_{(x,y)\in 
\mathcal{Z}}\mu _{x}\left( u_{xy}\left( \frac{W(\boldsymbol{\mu })-W\left( 
\boldsymbol{\mu }+\frac{\boldsymbol{v}_{xy}}{n}\right) }{W(\boldsymbol{\mu })%
}\right) -\gamma _{xy}C_{xy}\left( \frac{u_{xy}}{\gamma _{xy}}\right)
\right) \right\} =-R(\boldsymbol{\mu }).  \label{fornow}
\end{equation}%
Since $V_{K}^{n}$ is a solution to (\ref{prelimeq}), by Lemma \ref{iff} we
get that $\frac{1}{n}\log (V_{K}^{n})$ is a solution to (\ref{fornow}), and
thus uniqueness will imply $\frac{1}{n}\log (V_{K}^{n})=W_{K}^{n}.$ Let 
$\tilde{W}$ be any solution to (\ref{fornow}), $m\in \mathcal{P}^{n}(%
\mathcal{X})$, and $\boldsymbol{u}\in \mathcal{A}_{b}^{n,|\mathcal{Z}|}$,
and let $\boldsymbol{\mu }$ solve (\ref{eqdef:calTn}). Further assume that
there exists $\tau <\infty $ such that for $t>\tau $ 
\begin{equation}
\sum_{(x,y)\in \mathcal{Z}}\mu _{x}(t)\gamma _{xy}C_{xy}\left( \frac{%
u_{xy}(t))}{\gamma _{xy}}\right) -R(\boldsymbol{\mu }(t))\leq 0.
\label{eventnegative}
\end{equation}%
To show $J_{K}^{n}(\boldsymbol{m},\boldsymbol{u})\geq \tilde{W}(\boldsymbol{m%
})$ we can assume that $J_{K}^{n}(\boldsymbol{m},\boldsymbol{u})<\infty $,
since otherwise there is nothing to prove. By Lemma \ref{expmartingale}%
\begin{equation*}
\frac{\tilde{W}(\boldsymbol{\mu }(t\wedge T_{K}))}{\tilde{W}(\boldsymbol{m})}%
\exp \left\{ -\int_{0}^{t\wedge {T_{K}}}\sum_{(x,y)\in \mathcal{Z}}\mu
_{x}(s)u_{xy}(s)\frac{\Delta ^{n}\tilde{W}(\boldsymbol{\mu }(s))}{\tilde{W}(%
\boldsymbol{\mu }(s))}ds\right\}
\end{equation*}%
is a martingale. Taking expectations gives%
\begin{equation*}
\mathbb{E}_{\boldsymbol{m}}\left[ \tilde{W}(\boldsymbol{\mu }(t\wedge
T_{K}))\exp \left\{ -\int_{0}^{t\wedge {T_{K}}}\sum_{(x,y)\in \mathcal{Z}%
}\mu _{x}(s)u_{xy}(s)\frac{\Delta ^{n}\tilde{W}(\boldsymbol{\mu }(s))}{%
\tilde{W}(\boldsymbol{\mu }(s))}ds\right\} \right] =\tilde{W}(\boldsymbol{m}%
),
\end{equation*}%
and by (\ref{prelimeq}) and the definition of $\Delta ^{n}$ 
\begin{equation*}
\mathbb{E}_{\boldsymbol{m}}\left[ \tilde{W}(\boldsymbol{\mu }(t\wedge
T_{K}))\exp \left\{ n\int_{0}^{t\wedge T_{K}}\left( \sum_{(x,y)\in \mathcal{Z%
}}\mu _{x}(s)\gamma _{xy}C_{xy}\left( \frac{u_{xy}(s)}{\gamma _{xy}}\right)
-R(\boldsymbol{\mu }(s))\right) ds\right\} \right] \geq \tilde{W}(%
\boldsymbol{m}).
\end{equation*}

We claim that 
\begin{equation}
\mathbb{E}_{\boldsymbol{m}}\left[ \tilde{W}(\boldsymbol{\mu }(t\wedge
T_{K}))\exp \left\{ n\int_{0}^{\tau \wedge T_{K}}\left( \sum_{(x,y)\in 
\mathcal{Z}}\mu _{x}(s)\gamma _{xy}C_{xy}\left( \frac{u_{xy}(s)}{\gamma _{xy}%
}\right) -R(\boldsymbol{\mu }(s))\right) ds\right\} \right] <\infty .
\label{eqn:partofrep}
\end{equation}%
Since $\tilde{W}$ is uniformly bounded this term can be ignored. One can
then bound what remains in (\ref{eqn:partofrep}) by using 
\begin{equation*}
\infty >J_{K}^{n}(\boldsymbol{m},\boldsymbol{u})=\mathbb{E}_{\boldsymbol{m}}%
\left[ \exp \left\{ n\int_{0}^{T_{K}}\left( \sum_{(x,y)\in \mathcal{Z}}\mu
_{x}(s)\gamma _{xy}C_{xy}\left( \frac{u_{xy}(s)}{\gamma _{xy}}\right) -R(%
\boldsymbol{\mu }(s))\right) ds\right\} \right] ,
\end{equation*}%
breaking the integral over $[0,T_{K}]$ into contributions over $[0,\tau
\wedge T_{K}]$ and $[\tau \wedge T_{K},T_{K}]$, and then conditioning on $%
\mathcal{F}_{\tau }$ and using the lower bound on the term corresponding to $%
[\tau \wedge T_{K},T_{K}]$ provided by Lemma \ref{finitetime} (as in the
proof of Lemma \ref{eventuallydecreasingcontrols}). Since (by Lemma \ref%
{finitetime}) $T_{K}$ is finite almost surely, and (\ref{eventnegative})
holds for $t\geq \tau $, by dominated convergence theorem and (\ref%
{eqn:partofrep}) it follows that%
\begin{equation*}
J_{K}^{n}(\boldsymbol{m},\boldsymbol{u})=\mathbb{E}\left[ \exp \left\{
n\int_{0}^{{T_{K}}}\left( \sum_{(x,y)\in \mathcal{Z}}\mu _{x}(s)\gamma
_{xy}C_{xy}\left( \frac{u_{xy}(s)}{\gamma _{xy}}\right) -R(\boldsymbol{\mu }%
(s))\right) ds\right\} \right] \geq \tilde{W}(\boldsymbol{m}).
\end{equation*}%
By minimizing over all $\boldsymbol{u}$ that satisfy (\ref{eventnegative})
and applying Lemma \ref{eventuallydecreasingcontrols}, we get $W_{K}^{n}(%
\boldsymbol{m})\geq \tilde{W}(\boldsymbol{m}).$

Next let $\epsilon \in (0,1/2).$ For $\boldsymbol{m}\in \mathcal{P}^{n}(%
\mathcal{X}),t\geq 0$ we choose $\bar{\boldsymbol{u}}(\boldsymbol{m},t)$
such that 
\begin{equation}
\sum_{(x,y)\in \mathcal{Z}}m_{x}\left( \bar{u}_{xy}(\boldsymbol{m},t)\left( 
\frac{\tilde{W}(\boldsymbol{m})-\tilde{W}\left( \boldsymbol{m}+\frac{%
\boldsymbol{v}_{xy}}{n}\right) }{\tilde{W}(\boldsymbol{m})}\right) -\gamma
_{xy}C_{xy}\left( \frac{\bar{u}_{xy}(\boldsymbol{m},t)}{\gamma _{xy}}\right)
\right) \geq -R(\boldsymbol{m})-\frac{\epsilon }{t^{2}+1}.  \label{eoptimalW}
\end{equation}%
As before we can solve (\ref{eqdef:calTn}) and then generate a corresponding
element $\boldsymbol{u}$ of $\mathcal{A}_{b}^{n,|\mathcal{Z}|}$ by composing 
$\bar{u}_{xy}(\boldsymbol{m},t)$ with the solution. It is easy to see that $%
\boldsymbol{u}$ is an element of $\mathcal{A}_{b}^{n,|\mathcal{Z}|}$, since
very big or very small values of $\bar{u}_{xy}(\boldsymbol{m},t)$ will make
the left hand of (\ref{eoptimalW}) tend to $-\infty $. Arguing as before,
for fixed $t<\infty $%
\begin{equation*}
\mathbb{E}_{\boldsymbol{m}}\left[ \tilde{W}(\boldsymbol{\mu }(t\wedge
T_{K}))\exp \left\{ n\int_{0}^{{T_{K}\wedge t}}\left( \sum_{(x,y)\in 
\mathcal{Z}}\mu _{x}(s)\gamma _{xy}C_{xy}\left( \frac{\bar{u}_{xy}(%
\boldsymbol{\mu }(s),s)}{\gamma _{xy}}\right) -R(\boldsymbol{\mu }(s))-\frac{%
\epsilon }{s^{2}+1}\right) ds\right\} \right] \leq \tilde{W}(\boldsymbol{m}).
\end{equation*}%
By sending $t\rightarrow \infty $ and using the boundary condition, Fatou's
lemma gives 
\begin{equation*}
\mathbb{E}_{\boldsymbol{m}}\left[ \exp \left( \int_{0}^{\infty }-\frac{%
\epsilon }{s^{2}+1}ds\right) \exp \left\{ n\int_{0}^{{T_{K}}}\left(
\sum_{(x,y)\in \mathcal{Z}}\mu _{x}(s)\gamma _{xy}C_{xy}\left( \frac{\bar{u}%
_{xy}(\boldsymbol{\mu }(s),s)}{\gamma _{xy}}\right) -R(\boldsymbol{\mu }%
(s))\right) ds\right\} \right] \leq \tilde{W}(\boldsymbol{m}),
\end{equation*}%
from which we get $W_{K}^{n}(\boldsymbol{m})\leq \tilde{W}(\boldsymbol{m}%
)\exp [\epsilon \int_{0}^{\infty }1/(s^{2}+1)ds].$ Sending $\epsilon $ to
zero shows $W_{K}^{n}(\boldsymbol{m})\leq \tilde{W}(\boldsymbol{m})$.

The proof that $\mathcal{V}_{K}^{n}(\boldsymbol{x}^{n})=-\frac{1}{n}\log (%
\mathcal{W}_{K}^{n}(\boldsymbol{x}^{n}))$ is similar and thus omitted. It
remains only to prove $V_{K}^{n}(L(\boldsymbol{x}^{n}))=\mathcal{V}_{K}^{n}(%
\boldsymbol{x}^{n})$. We have established that $V_{K}^{n}$ is the only
function that satisfies 
\begin{equation*}
\inf_{\boldsymbol{q}\in (0,\infty )^{|\mathcal{Z}|}}\left\{ \sum_{(x,y)\in 
\mathcal{Z}}m_{x}\left( q_{xy}\Delta _{xy}^{n}V_{K}^{n}\left( \boldsymbol{m}%
\right) +F_{xy}(q_{xy})\right) \right\} =-R(\boldsymbol{m}), 
\end{equation*}
and that $\mathcal{V}_{K}^{n}$ is the only function that satisfies%
\begin{equation}
\inf_{\boldsymbol{q}\in (0,\infty )^{n|\mathcal{Z}|}}\left\{
\sum_{i=1}^{n}\sum_{y\in \mathcal{Z}_{x_{i}^{n}}}\left( q_{x_{i}^{n}y}\Delta
_{i,x_{i}^{n}y}^{n}\mathcal{V}_{K}^{n}\left( \boldsymbol{x}^{n}\right)
+F_{x_{i}^{n}y}(q_{x_{i}^{n}y})\right) \right\} =-nR(L(\boldsymbol{x}^{n})).
\label{xhjb}
\end{equation}%
Since $K\subset \mathcal{X}^{n}$ is invariant under permutations, and
therefore can be identified with a subset of $\mathcal{P}^{n}(\mathcal{X}),$
we have that there exists a function $\bar{V}:\mathcal{P}^{n}(\mathcal{X}%
)\rightarrow \lbrack 0,\infty )$ such that $\bar{V}(L(\boldsymbol{x}^{n}))=%
\mathcal{V}_{K}^{n}(\boldsymbol{x}^{n}),$ and therefore (\ref{xhjb}) becomes 
\begin{equation*}
\inf_{\boldsymbol{q}\in (0,\infty )^{n|\mathcal{Z}|}}\left\{
\sum_{i=1}^{n}\sum_{y\in \mathcal{Z}_{x_{i}^{n}}}\left( q_{x_{i}^{n}y}\Delta
_{i,x_{i}^{n}y}^{n}\bar{V}\left( L(\boldsymbol{x}^{n})\right)
+F_{x_{i}^{n}y}(q_{x_{i}^{n}y})\right) \right\} =-nR(L(\boldsymbol{x}^{n})).
\end{equation*}

For $\epsilon >0$, let $\bar{\boldsymbol{q}}\in (0,\infty )^{n|\mathcal{Z}|}$
satisfy 
\begin{equation*}
\sum_{i=1}^{n}\sum_{y\in \mathcal{Z}_{x_{i}^{n}}}\left[ \bar{q}%
_{x_{i}^{n}y}\Delta _{i,x_{i}^{n}y}^{n}\bar{V}\left( L(\boldsymbol{x}%
^{n})\right) +F_{x_{i}^{n}y}(\bar{q}_{x_{i}^{n}y})\right] \leq -nR(L(%
\boldsymbol{x}^{n}))+\epsilon .
\end{equation*}%
Now pick $\tilde{\boldsymbol{q}}\in (0,\infty )^{|\mathcal{Z}|}$ by
requiring $nL_{x}(\boldsymbol{x}^{n})\tilde{q}_{xy}=%
\sum_{i=1}^{n}I_{x_{i}^{n}=x}\bar{q}_{x_{i}^{n}y}$, so that 
\begin{equation*}
\sum_{(x,y)\in \mathcal{Z}}nL_{x}(\boldsymbol{x}^{n})\tilde{q}_{xy}\Delta
_{xy}^{n}\bar{V}\left( L(\boldsymbol{x}^{n})\right)
+\sum_{i=1}^{n}\sum_{y\in \mathcal{Z}_{x_{i}^{n}}}F_{x_{i}^{n}y}(\bar{q}%
_{x_{i}^{n}y})\leq -nR(L(\boldsymbol{x}^{n}))+\epsilon .
\end{equation*}%
By using convexity of $F_{xy}$ (see Lemma \ref{lemma:fnproperties}) we get%
\begin{equation*}
\sum_{(x,y)\in \mathcal{Z}}L_{x}(\boldsymbol{x}^{n})\left[ \tilde{q}%
_{xy}\Delta _{xy}^{n}\bar{V}\left( L(\boldsymbol{x}^{n})\right) +F_{xy}(%
\tilde{q}_{xy})\right] \leq -R(L(\boldsymbol{x}^{n}))+\epsilon /n,
\end{equation*}%
and sending $\epsilon \downarrow 0$ gives%
\begin{equation*}
\inf_{\boldsymbol{q}\in (0,\infty )^{|\mathcal{Z}|}}\left\{ \sum_{(x,y)\in 
\mathcal{Z}}L_{x}(\boldsymbol{x}^{n})\left[ q_{xy}\Delta _{xy}^{n}\bar{V}%
\left( L(\boldsymbol{x}^{n})\right) +F_{xy}(q_{xy})\right] \right\} \leq
-R(L(\boldsymbol{x}^{n})).
\end{equation*}

The other direction is trivial, and follows if in (\ref{xhjb}) one uses
rates that are the same for all agents in the same position.
\end{proof}

\section{Discussion regarding convergence}

Before we introduce the deterministic control problem, we define the set of
admissible controls and controlled trajectories.

\begin{definition}
We define the space of paths and controls by%
\begin{equation*}
\mathcal{C}\doteq \left\{ (\boldsymbol{\mu },\boldsymbol{q})\in \mathcal{D}%
([0,\infty );\mathcal{P}(\mathcal{X}))\times \mathcal{F}\left( [0,\infty
);[0,\infty )^{\otimes \mathcal{Z}}\right) :\,\mu _{x}q_{xy}\,\text{is
locally integrable }\forall (x,y)\in \mathcal{Z}\right\} ,
\end{equation*}
where $\mathcal{F}\left( [0,\infty );[0,\infty )^{\otimes \mathcal{Z}%
}\right) $ was defined in (\ref{measurable}). We define $\Lambda :\mathcal{C}%
\times \mathcal{P}(\mathcal{X})\rightarrow \mathcal{D}([0,\infty );\mathcal{H%
})$ by 
\begin{equation}
\Lambda (\boldsymbol{\mu },\boldsymbol{q},\boldsymbol{m})(t)\doteq 
\boldsymbol{m}+\sum_{(x,y)\in \mathcal{Z}}\boldsymbol{v}_{xy}\int_{[0,t)}\mu
_{x}(s)q_{xy}(s)ds.  \label{eqdef:calT}
\end{equation}%
Also we define the set of all deterministic pairs that correspond to a
solution of the equation $\boldsymbol{\mu }=\Lambda (\boldsymbol{\mu },%
\boldsymbol{q},\boldsymbol{m}),$ i.e., 
\begin{equation*}
\mathcal{T}_{\boldsymbol{m}}\doteq \{(\boldsymbol{\mu },\boldsymbol{q})\in 
\mathcal{C}:\boldsymbol{\mu }=\Lambda (\boldsymbol{\mu },\boldsymbol{q},%
\boldsymbol{m}),\boldsymbol{\mu }(0)=\boldsymbol{m}\}
\end{equation*}%
Finally we introduce the set of controls that generate controlled
trajectories%
\begin{equation}
\mathcal{U}_{\boldsymbol{m}}\doteq \left\{ \boldsymbol{q}\in \mathcal{F}%
([0;\infty );[0,\infty )^{\otimes \mathcal{Z}}):\exists \boldsymbol{\mu }\in 
\mathcal{D}([0,\infty );\mathcal{P}(\mathcal{X}))\hspace{4pt}\text{such that}%
\hspace{4pt}(\boldsymbol{\mu },\boldsymbol{q})\in \mathcal{T}_{\boldsymbol{m}%
}\right\} .  \label{eqdef:calUm}
\end{equation}
\end{definition}

Then the deterministic control problems are given by

\begin{equation}
V_{K}(\boldsymbol{m})\doteq \inf_{(\boldsymbol{\mu },\boldsymbol{q})\in 
\mathcal{T}_{\boldsymbol{m}}}J_{K}(\boldsymbol{m},\boldsymbol{\mu },%
\boldsymbol{q}),  \label{eqdef:V}
\end{equation}
with 
\begin{equation*}
J_{K}(\boldsymbol{m},\boldsymbol{\mu },\boldsymbol{q})\doteq \left\{
\int_{0}^{T_{K}}\left( \sum_{(x,y)\in \mathcal{Z}}\mu
_{x}(t)F_{xy}(q_{xy}(t))+R(\boldsymbol{\mu }(t))\right) dt\right\} , \hspace{%
8pt} T_{K}\doteq \inf_{t\in \lbrack 0,\infty ]}\left\{ \boldsymbol{\mu }%
(t)\in K\right\}.  
\end{equation*}

In this section we consider sets $K\subset {\mathcal{P}(\mathcal{X})}$ that
satisfy the following assumption.

\begin{assumption}
\label{assumption:o} $K=\overline{K^{\circ }}\neq \emptyset $.
\end{assumption}

For such sets we show that the sequence of values functions $V_{K}^{n}$
converges uniformly to the function $V_{K}$. To simplify the notation we
will drop the index that corresponds to the set from the stopping time. We
split the study of the convergence in two parts. In the first part, without
making any extra assumptions on the cost functions and in great generality,
we prove that for any sequence $\{\boldsymbol{m}^{n}\},$ with $\boldsymbol{m}%
^{n}\in \mathcal{P}^{n}(\mathcal{X})$ converging in $\boldsymbol{m}\in 
\mathcal{P}(\mathcal{X})$,%
\begin{equation*}
\liminf_{n\rightarrow \infty }V_{K}^{n}(\boldsymbol{m}^{n})\geq V_{K}(%
\boldsymbol{m}).
\end{equation*}%
The other direction of the inequality, i.e., $\limsup_{n\rightarrow \infty
}V_{K}^{n}(\boldsymbol{m}^{n})\leq V_{K}(\boldsymbol{m}),$ is not as
straightforward and its analysis can be quite involved. In order to avoid
technical issues relating to controllability we will add some assumptions.

Before we present the extra assumptions on $\boldsymbol{C}$ we discuss an
almost trivial choice for the cost function that will motivate these extra
assumptions. As stated in Lemma \ref{lemos}, for every $(x,y)\in \mathcal{Z}$
we have $C_{xy}(u)\geq -\log u+u-u.$ Actually the function $C_{xy}(u)=-\log
u+u-1$ satisfies Assumption \ref{assumption} and therefore is an eligible
cost function. Setting $C_{xy}(u)\equiv C(u)=-\log u+u-1,$ we get 
\begin{eqnarray}
G_{xy}(u,q)&=&u\ell \left( \frac{q}{u}\right) -\gamma _{xy}C_{xy}\left( 
\frac{u}{\gamma _{xy}}\right) = q\log \frac{q}{u}-q+u+\gamma _{xy}\log \frac{%
u}{\gamma _{xy}}-u+\gamma _{xy}  \label{eqn:specialcost} \\
&=&q\log q+(\gamma _{xy}-q)\log u-q+\gamma _{xy}.  \notag
\end{eqnarray}

Examining (\ref{eqn:specialcost}) and referring to the definition of $F_{xy}$
in (\ref{eqdef:Fxy,Gxy}), we observe that if $q_{xy}>\gamma _{xy} $ then the
\textquotedblleft maximizing player\textquotedblright\ (the one that picks $%
\boldsymbol{u}$), can produce an arbitrarily large cost by making $u_{xy}$
as \textbf{small} as needed. If $q_{xy}<\gamma _{xy}$, this player can
produce an arbitrarily large cost by making $u_{xy}$ as \textbf{big} as
needed. Hence the minimizing player must keep $q_{xy}=\gamma _{xy}$, and the
value function $V(\boldsymbol{m})$ is infinite unless the solution of the
equation $\dot{\boldsymbol{\nu }}(t)=\boldsymbol{\nu }(t)\boldsymbol{\gamma }
$ passes through $K$ for the specific choice of initial data $\boldsymbol{m}%
. $ To resolve this difficulty we could start by imposing the following
assumption on the cost.

\begin{equation*}
\lim_{u\rightarrow 0}u(C_{xy})^{\prime }(u)=-\infty, 
\hspace{16pt} \liminf_{u\rightarrow \infty}\{u
(C_{xy})^{\prime }(u)-u\}\geq 0.
\end{equation*}

This assumption makes $F$ finite on $(0,\infty )$ and allows for some
controllability. Specifically, if the first point is true and if $%
\boldsymbol{m},\tilde{\boldsymbol{m}}\in \mathcal{P}_{a}(\mathcal{X})$ for
some $a>0$, then one can observe (see the proof of Lemma \ref%
{lemma:fnproperties}) that the total cost $V_{\{\bar{\boldsymbol{m}}\}}(%
\boldsymbol{m})$ for moving from point $\boldsymbol{m}$ to $\tilde{%
\boldsymbol{m}}$ is uniformly bounded by $c_{a}\Vert \boldsymbol{m}-\tilde{%
\boldsymbol{m}}\Vert ,$ where $c_{a}>0$ is an appropriate constant, where
the minimizing player picks $\tilde{q}_{xy}(t) $ to be uniformly bounded
from above, but big enough to reach the desired point. In particular, the
maximizing player cannot impose an arbitrarily large cost by taking $u_{xy}$
small. In an analogous fashion, the second point implies that the minimizer
can choose controls so that the total cost $V_{\{\tilde{\boldsymbol{m}}\}}(%
\boldsymbol{m})$ for moving from point $\boldsymbol{m}$ to $\tilde{%
\boldsymbol{m}}$ is uniformly bounded by $c_{a}^{\prime }\Vert \boldsymbol{m}%
-\tilde{\boldsymbol{m}}\Vert $ by picking $\tilde{q}_{xy}(t)$ bounded from
below but small enough.

However, if $\tilde{\boldsymbol{m}}$ is in the natural boundary of the
simplex $\mathcal{P}(\mathcal{X})$ an additional complication arises,
because to reach the natural boundary it must be true that for at least one $%
(x,y)\in \mathcal{Z}$ the quantity $\tilde{q}_{xy}(t)$ will scale like $1/%
\tilde{\mu}_{x}(t)$. In that case, the first point is not enough for a
finite cost, since sending $\tilde{q}_{xy}(t)$ to infinity in order to reach
the natural boundary may result in an infinite total cost. Taking all these
issues into account we end up with the following assumption.
\begin{assumption}
\label{assumption:uh} Let $\boldsymbol{C},R$ be as in Assumption \ref%
{assumption}. Assume that for all $(x,y)\,\in \mathcal{Z}$, the following
are valid.

\begin{enumerate}
\item \label{enumerate:uhgrowthofh} There exists $p>0$ such that 
\begin{equation*}
\lim_{u\rightarrow 0}u^{p+1}C_{xy}^{\prime }(u)=-\infty .
\end{equation*}

\item \begin{equation*}
\liminf_{u\rightarrow \infty}\{u C_{xy}^{\prime }(u)-u\}\geq 0.
\end{equation*}
\end{enumerate}
\end{assumption}

It is straightforward to see that Assumption \ref{assumption:uh} is satisfied by all functions in Example \ref{example}, with $p,q>1.$ Now we state the second main theorem of the paper.
\begin{theorem}
\label{main2} Let $\boldsymbol{C}, R,$ satisfy Assumption \ref{assumption:uh}. Let also $K$ be a closed subset of $\mathcal{P}(%
\mathcal{X}) $ that satisfies Assumption \ref{assumption:o}. Finally assume
that in every compact subset of $K^{c},$ $R$ is bounded from below by a
positive constant. Then the sequence of functions $V_{K}^{n}$ defined in (%
\ref{eqdef:Vn}) converges uniformly to $V_{K}$ defined in (\ref{eqdef:V}).
\end{theorem}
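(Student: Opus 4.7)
The plan is to establish the two inequalities $\liminf_{n\to\infty} V_K^n(\boldsymbol{m}^n) \geq V_K(\boldsymbol{m})$ and $\limsup_{n\to\infty} V_K^n(\boldsymbol{m}^n) \leq V_K(\boldsymbol{m})$ for every sequence $\boldsymbol{m}^n \in \mathcal{P}^n(\mathcal{X})$ converging to $\boldsymbol{m} \in \mathcal{P}(\mathcal{X})$, and then upgrade pointwise convergence to uniform convergence via equicontinuity together with the compactness of $\mathcal{P}(\mathcal{X})$. The equicontinuity estimate for $\{V_K^n\}$ will itself use the controllability estimates described below: moving between two nearby points $\boldsymbol{m}, \tilde{\boldsymbol{m}} \in \mathcal{P}_a(\mathcal{X})$ costs at most $c_a \|\boldsymbol{m}-\tilde{\boldsymbol{m}}\|$, as already sketched in the paragraphs preceding Assumption \ref{assumption:uh}.

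For the lower bound, I would select $\boldsymbol{q}^n \in \mathcal{A}_b^{n,|\mathcal{Z}|}$ with $J_K^n(\boldsymbol{m}^n,\boldsymbol{q}^n) \leq V_K^n(\boldsymbol{m}^n) + 1/n$ and analyze the pair $(\boldsymbol{\mu}^n, \boldsymbol{q}^n)$ via the occupation measures $\Theta^n(dt\,dq) \doteq \delta_{\boldsymbol{q}^n(t)}(dq)\,dt$. The lower bound $F_{xy} \geq \gamma_{xy}\ell(\cdot/\gamma_{xy})$ from Lemma \ref{lemma:fnproperties} together with the hypothesis that $R$ is bounded below by a positive constant on compact subsets of $K^c$ gives, via the near-optimality, a uniform bound on $\mathbb{E}[T_K^n]$ and a tightness of $\Theta^n$ restricted to $[0,T_K^n]$. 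A standard martingale estimate for the Poisson-driven SDE \eqref{eqdef:calTn} shows that along any weakly convergent subsequence the limit $(\boldsymbol{\mu}, \boldsymbol{q})$ satisfies $\boldsymbol{\mu} = \Lambda(\boldsymbol{\mu}, \boldsymbol{q}, \boldsymbol{m})$, and hence $(\boldsymbol{\mu}, \boldsymbol{q}) \in \mathcal{T}_{\boldsymbol{m}}$. Assumption \ref{assumption:o}, namely $K = \overline{K^\circ}$, is the crucial ingredient that lets us pass to the limit in the hitting times: it rules out degenerate tangential situations and yields $T_K(\boldsymbol{\mu}) \leq \liminf_n T_K^n$ almost surely, after which convexity of $F_{xy}$ and Fatou's lemma deliver $V_K(\boldsymbol{m}) \leq \liminf_n V_K^n(\boldsymbol{m}^n)$.

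For the upper bound, pick $\epsilon > 0$ and a near-optimal $(\boldsymbol{\mu}^*, \boldsymbol{q}^*) \in \mathcal{T}_{\boldsymbol{m}}$ with $J_K(\boldsymbol{m}, \boldsymbol{\mu}^*, \boldsymbol{q}^*) \leq V_K(\boldsymbol{m}) + \epsilon$. Using $K = \overline{K^\circ}$, I would first perturb $\boldsymbol{\mu}^*$ so that it enters $K^\circ$ transversally at a finite time $T^*$, at additional cost $O(\epsilon)$ controlled by the Lipschitz-type estimate $V_{\{\tilde{\boldsymbol{m}}\}}(\boldsymbol{m}) \leq c_a\|\boldsymbol{m}-\tilde{\boldsymbol{m}}\|$ that Assumption \ref{assumption:uh} provides for interior and boundary target points. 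Next I would mollify $\boldsymbol{q}^*$ to a bounded, piecewise-smooth control and transport the initial condition from $\boldsymbol{m}$ to $\boldsymbol{m}^n$, again at cost tending to $0$. The modified deterministic control then lifts to feedback controls $\boldsymbol{q}^n \in \mathcal{A}_b^{n,|\mathcal{Z}|}$; a Gronwall estimate applied to \eqref{eqdef:calTn} (using the martingale decomposition with $n^{-1}$-scaled noise) shows that $\boldsymbol{\mu}^n \to \boldsymbol{\mu}^*$ uniformly on compact time intervals in probability. Combined with transversal entry into $K^\circ$, this yields $T_K^n \to T^*$ in probability, and dominated convergence on the cost (justified by the boundedness of $\boldsymbol{q}^n$) gives $\limsup_n V_K^n(\boldsymbol{m}^n) \leq V_K(\boldsymbol{m}) + O(\epsilon)$.

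The main obstacle is the upper-bound construction in the neighbourhood of the boundary $\partial \mathcal{P}(\mathcal{X})$: to track an optimal $\boldsymbol{\mu}^*$ that touches the boundary, one needs controls with $q_{xy}(t)$ growing like $1/\mu_x^*(t)$, and this is precisely where naive cost bounds blow up. Assumption \ref{assumption:uh}(1), via the decay rate $u^{p+1}C_{xy}'(u) \to -\infty$, keeps the cost of such large $q_{xy}$ integrable, while Assumption \ref{assumption:uh}(2) handles the symmetric situation of driving toward a boundary point. A secondary obstacle is establishing that $T_K^n \to T^*$ even when $\boldsymbol{\mu}^*$ approaches $\partial K$ with small normal velocity; the transversal-perturbation step built on $K = \overline{K^\circ}$ is designed to remove this difficulty, though the perturbation must be chosen small enough that the added cost is negligible yet large enough to dominate the $O(n^{-1/2})$ fluctuations in $\boldsymbol{\mu}^n - \boldsymbol{\mu}^*$.
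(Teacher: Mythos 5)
The skeleton of your argument matches the paper's: lower bound via occupation measures and tightness, upper bound via near-optimal deterministic controls lifted to stochastic feedback controls and compared by a Gronwall-type estimate, with the boundary of the simplex handled by short bursts of nominal rates $\boldsymbol{\gamma}$. However, there is one genuine gap in your lower-bound argument and a misattribution of the role of Assumption \ref{assumption:o}.

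The gap is in your tightness step. You claim that the lower bound $F_{xy} \geq \gamma_{xy}\ell(\cdot/\gamma_{xy})$ together with the hypothesis that $R$ is bounded below on compact subsets of $K^c$ gives a uniform bound on $\mathbb{E}[T_K^n]$. This does not follow directly: $K^c$ itself is open, not compact, and the near-optimal trajectory $\boldsymbol{\mu}^n$ can linger arbitrarily close to $\partial K$ while still in $K^c$, where the hypothesis gives no positive lower bound on $R$. Hence $\int_0^{T_K^n} R(\boldsymbol{\mu}^n(t))\,dt \leq c$ alone does not bound $\mathbb{E}[T_K^n]$. The paper's fix is to introduce the closed enlargement $K_\delta = \{\boldsymbol{m} : d(\boldsymbol{m},K) \leq \delta\}$ and stop at $T^{n,\delta}$; on $[0,T^{n,\delta}]$ the trajectory stays in the compact set $\{\boldsymbol{m} : d(\boldsymbol{m},K) \geq \delta\}\subset K^c$ where $R \geq R_{\min}^\delta > 0$, and this gives the needed bound on $\mathbb{E}[T^{n,\delta}]$. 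One then passes to $\delta\downarrow 0$ by the continuity estimate from Theorem \ref{lemma:uniFUont}, which bounds the extra cost of travelling from $\partial K_\delta$ to $K$ by a quantity that vanishes as $\delta\to 0$. Your sketch omits the $K_\delta$ mechanism entirely, and without it the tightness of the hitting times does not close.

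You also invoke $K = \overline{K^\circ}$ as the ``crucial ingredient'' for the lower bound, asserting $T_K(\boldsymbol{\mu}) \leq \liminf_n T_K^n$. Lower semicontinuity of the hitting time of a \emph{closed} set under trajectory convergence already follows from closedness; $K = \overline{K^\circ}$ plays no role there. The topological assumption is needed for the opposite direction, in the upper bound, precisely in the step you also describe — perturbing the deterministic trajectory to enter $K^\circ$ transversally, so that the $O(n^{-1/2})$ stochastic fluctuation of $\boldsymbol{\mu}^n$ around $\boldsymbol{\mu}^*$ cannot delay the exit. Your upper-bound sketch is essentially the paper's and is fine; the one thing to make explicit is the use of Lemma \ref{lemma:awayfromboundary}, which gives the bounded, simplex-interior modification $(\tilde{\boldsymbol{\mu}},\tilde{\boldsymbol{q}})$ at no extra cost and is what justifies the feedback control $q^n_{xy}(t) = q_{xy}(t)\,\mu^n_x(t-)/\mu_x(t)$ being admissible (bounded above and away from zero). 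Finally, your proposed route to uniform convergence via equicontinuity of $\{V_K^n\}$ is a valid alternative; the paper instead proves the liminf/limsup estimates for every sequence $\boldsymbol{m}^n\to\boldsymbol{m}$, which on the compact $\mathcal{P}(\mathcal{X})$ already implies uniform convergence by a standard diagonal argument.
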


Before proceeding with the proof, we state some properties of $F_{xy}$.

\begin{lemma}
\label{lemma:fnproperties2} For every $(x,y)\in \mathcal{Z},$ let $F_{xy}$
be as in (\ref{eqdef:Fxy,Gxy}), where $C_{xy}$ satisfy Assumption \ref%
{assumption:uh}. Then the following hold.

\begin{enumerate}
\item There exists a constant $M\in (0,\infty )$ and a decreasing function $%
\bar{M}:(0,\infty )\rightarrow (0,\infty ),$ such that for every $\epsilon
>0 $ and every $q\geq \epsilon $, 
\begin{equation*}
F_{xy}(q)\leq q\log \frac{q}{\min \left\{ \gamma _{xy}\left( \gamma
_{xy}/q\right) ^{1/p},M\right\} }+\bar{M}(\epsilon ).
\end{equation*}

\item $F_{xy}$ is continuous on the interval $(0,\infty ).$
\end{enumerate}
\end{lemma}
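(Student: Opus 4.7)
The plan is to analyze the supremum defining $F_{xy}$ directly. Expanding $\ell(q/u) = (q/u)\log(q/u) - q/u + 1$, one rewrites
\[
F_{xy}(q) = \sup_{u>0}\left[q\log(q/u) - q + u - \gamma_{xy} C_{xy}(u/\gamma_{xy})\right].
\]
For Part 1, I would first extract quantitative growth estimates on $C_{xy}$ from Assumption \ref{assumption:uh}. Integrating the inequality $u^{p+1}C_{xy}'(u) \leq -c$ (valid for small $u$ by condition 1), one obtains a bound of the form $C_{xy}(v) \geq c_1 v^{-p} - c_2$ on $(0, v_0]$; combining condition 2 with the monotonicity of $uC_{xy}'(u) - u$ from Assumption \ref{assumption}.\ref{enumerate:convex} gives $C_{xy}(v) \geq v - c_3$ for $v$ large. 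These estimates force the bracketed expression above to tend to $-\infty$ uniformly over $q \geq \epsilon$ as $u \downarrow 0$ or $u \uparrow \infty$, so that the supremum is attained at some $\hat u(q)$ confined to a bounded range.

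Next, the first-order condition $C_{xy}'(\hat u/\gamma_{xy}) = 1 - q/\hat u$, together with the near-zero growth estimate on $C_{xy}'$ coming from condition 1, shows that $\hat u(q)$ is of order $\gamma_{xy}(\gamma_{xy}/q)^{1/p}$ for $q$ large (the regime where $\hat u$ is small, and the $C_{xy}$ term must balance the logarithmic contribution), while for $q$ moderate or small the balance shifts and $\hat u$ remains bounded by a constant $M$ controlled by condition 2. A case split according to whether $\gamma_{xy}(\gamma_{xy}/q)^{1/p}$ lies above or below $M$ naturally produces the threshold $u^{\ast} = \min\{\gamma_{xy}(\gamma_{xy}/q)^{1/p}, M\}$ appearing in the statement. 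Evaluating the bracketed expression at (or very near) $\hat u$ and using the growth estimates yields $F_{xy}(q) \leq q\log(q/u^{\ast}) + [\text{subleading}]$, where the subleading piece $-q + \hat u - \gamma_{xy} C_{xy}(\hat u/\gamma_{xy})$ is bounded uniformly for $q \geq \epsilon$ by a constant $\bar M(\epsilon)$. The $\epsilon$-dependence is required because terms like $-q\log q$ and the corresponding contribution from $\hat u$ become large as $q \downarrow 0$.

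Part 2 is then immediate: Lemma \ref{lemma:fnproperties}.3 gives convexity of $F_{xy}$ on $[0,\infty)$, and Part 1 shows $F_{xy}(q) < \infty$ for every $q > 0$. A convex function that is finite on an open interval is automatically continuous there, so $F_{xy}$ is continuous on $(0,\infty)$.

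The main obstacle is the bookkeeping in Part 1: matching the precise $\min$-form of the $q$-dependent leading term requires carefully tracking the two asymptotic regimes and absorbing all multiplicative constants either into the definition of $M$ or into the residual $\bar M(\epsilon)$. In particular, recovering the exponent $1/p$ with the correct prefactor $\gamma_{xy}$ forces one to actually integrate the condition $u^{p+1}C_{xy}'(u) \to -\infty$ and invert the resulting first-order equation, rather than appealing to a generic coercivity argument.
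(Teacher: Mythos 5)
Your Part 2 is exactly the paper's argument and is correct. For Part 1, the overall strategy --- use coercivity to show the supremum is attained at an interior $\hat{u}(q)$, then estimate $G_{xy}(\hat{u},q)$ --- is sound, and your coercivity estimates for $u\downarrow 0$ and $u\uparrow\infty$ are the right ones. The trouble is in the middle step, where you propose to ``invert the first-order condition'' $C_{xy}'(\hat{u}/\gamma_{xy})=1-q/\hat{u}$ and conclude that $\hat{u}(q)$ is \emph{of order} $\gamma_{xy}(\gamma_{xy}/q)^{1/p}$. Assumption \ref{assumption:uh}.\ref{enumerate:uhgrowthofh} is a one-sided condition: it forces $-C_{xy}'(v)\geq v^{-p-1}$ near zero but imposes no matching upper bound on $|C_{xy}'|$ (e.g.\ $C_{xy}'(v)=-e^{1/v}$ is admissible, and then $\hat{u}(q)\asymp 1/\log q$, far larger than $q^{-1/p}$). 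So the first-order equation cannot be inverted to pin down the asymptotics of $\hat{u}$, and the exponent $1/p$ is not the scale of the argmax in general.

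What you actually need is weaker, and it is available: a one-sided lower bound $\hat{u}(q)\geq M_{xy}^{1}(q):=\min\{\gamma_{xy}(\gamma_{xy}/q)^{1/p},M\}$. This does \emph{not} come from inverting the stationarity equation; it comes from the sign of the derivative. One checks directly that $\partial_u G_{xy}(u,q)=1-q/u-C_{xy}'(u/\gamma_{xy})>0$ whenever $u<M_{xy}^1(q)$, since there the one-sided estimate on $-C_{xy}'$ dominates $q/u$. Hence $G_{xy}(\cdot,q)$ is increasing on $(0,M_{xy}^1(q)]$ and the argmax lies to its right. Combined with an $\epsilon$-dependent upper threshold $M_{xy}^2(\epsilon)$ beyond which $\partial_u G_{xy}(u,q)\leq 0$ for all $q\geq\epsilon$ (this is where Assumption \ref{assumption:uh}.2 and the monotonicity of $uC_{xy}'(u)-u$ enter), you get $M_{xy}^1(q)\leq\hat{u}(q)\leq M_{xy}^2(\epsilon)$, and then
\[
F_{xy}(q)=G_{xy}(\hat{u},q)\leq q\log\frac{q}{\hat{u}}-q+\hat{u}\leq q\log\frac{q}{M_{xy}^1(q)}+M_{xy}^2(\epsilon),
\]
which is the asserted bound. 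The paper (Lemma \ref{lemma:updown}) formalizes precisely these two monotonicity intervals and then bounds $G_{xy}$ separately on the three pieces $(0,M_{xy}^1(q)]$, $[M_{xy}^1(q),M_{xy}^2(\epsilon)]$, $[M_{xy}^2(\epsilon),\infty)$, never locating $\hat{u}$ at all. I'd recommend replacing your ``inversion'' step by that derivative-sign argument; as written, the step you flagged as the ``main obstacle'' is not just bookkeeping --- the inversion genuinely is not available under the hypotheses.
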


The proof of the Lemma \ref{lemma:fnproperties2} can be found in Appendix \ref{ap:b}.
It is worth mentioning that it is possible that $F_{xy}(0)=\infty.$ In
the sequel we will make use of the following remark, which states a property
proved in \cite[Proposition 4.14]{Dupuis2016}

\begin{remark}
\label{away} There exists $D\geq 1$ and $b_{1}>0,b_{2}<\infty $ such that
for every $\boldsymbol{m}\in \mathcal{P}(X),$ if $\boldsymbol{\nu }(%
\boldsymbol{m},t)$ is the solution of $\dot{\boldsymbol{\nu }}(t)=%
\boldsymbol{\nu }(t)\boldsymbol{\gamma }$ with initial point $\boldsymbol{%
\nu }(0)=\boldsymbol{m}$, then

\begin{equation*}
1.\,\forall x\in\mathcal{X}, \nu_{x}(\boldsymbol{m},t)\geq b_{1}t^{D} 
\hspace{32pt}\text{and}\hspace{32pt} 2.\,\Vert \nu (\boldsymbol{m},t)-%
\boldsymbol{m}\Vert \leq b_{2}t.
\end{equation*}
\end{remark}

Before proceeding with the proof of Theorem \ref{main2}, we prove that the
function $V(\boldsymbol{m})$ is continuous. We will actually prove something
stronger. Recall that $\boldsymbol{\gamma }$ denotes the original
unperturbed jump rates and the definitions of $\mathcal{P}_{\ast }(\mathcal{X%
})$ and $\mathcal{P}_{a}(\mathcal{X})$ in (\ref{eqn:pa}).

\begin{theorem}
\label{lemma:uniFUont}

There is a constant $\bar{c}\in\mathbb{R}$ that depends only the dimension $d$ and the
unperturbed rates $\boldsymbol{\gamma },$ such that for every $\boldsymbol{m}%
\in \mathcal{P}_{\ast}(\mathcal{X}),\, \boldsymbol{\tilde{m}}\in \mathcal{P}(%
\mathcal{X})$ there exists a control $\boldsymbol{q}\in \mathcal{U}_{%
\boldsymbol{m}},$ that generates a unique $\boldsymbol{\mu }$ with $(%
\boldsymbol{\mu },\boldsymbol{q})\in \mathcal{T}_{\boldsymbol{m}},$
satisfying

\begin{enumerate}
\item $\boldsymbol{\mu}$ is a constant speed parametrization of the straight
line that connects $\boldsymbol{m}$ and $\boldsymbol{\tilde{m}},$

\item the exit time $T_{\{\tilde{\boldsymbol{m}}\}}$ is equal to $\|%
\boldsymbol{m}-\tilde{\boldsymbol{m}}\|$,

\item $\gamma _{xy}\leq q_{xy}(t)$ \text{and} $\mu_{x}(t) q_{xy}(t)\leq \bar{%
c}$.
\end{enumerate}

Furthermore, if $\boldsymbol{m},\tilde{\boldsymbol{m}}\in \mathcal{P}_{a}(%
\mathcal{X})$ then 
\begin{equation*}
\gamma _{xy}\leq q_{xy}(t)\leq \frac{\bar{c}}{a},
\end{equation*}
and we can find a constant $c_{a}<\infty $ such that the total cost for
applying the control is bounded above by $c_{a}\Vert \boldsymbol{m}-\tilde{%
\boldsymbol{m}}\Vert .$ Finally, for every $\epsilon >0$ there exists $%
\delta >0,$ such that $\Vert \bar{\boldsymbol{m}}-\tilde{\boldsymbol{m}}%
\Vert \leq \delta $ implies $V_{\{\tilde{\boldsymbol{m}}\}}(\bar{\boldsymbol{%
m}}),V_{\{\bar{\boldsymbol{m}}\}}(\tilde{\boldsymbol{m}})\leq \epsilon ,$
and therefore as a function of two variables $V$ is continuous on $\mathcal{P%
}(\mathcal{X})\times \mathcal{P}(\mathcal{X})$.
\end{theorem}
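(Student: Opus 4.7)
The plan is to construct the desired control explicitly as a linear-interpolation path equipped with rates that are bounded perturbations of the nominal rates, verify the three listed properties, specialize to the $\mathcal{P}_a$ case, and finally derive joint continuity of $V$ via a two-phase argument combining nominal dynamics with the straight-line construction.

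For the main construction, set $T=\|\boldsymbol{m}-\tilde{\boldsymbol{m}}\|$, $\boldsymbol{w}=(\tilde{\boldsymbol{m}}-\boldsymbol{m})/T$ (a unit vector in $\mathcal{H}$), and $\boldsymbol{\mu}(t):=\boldsymbol{m}+t\boldsymbol{w}$ for $t\in[0,T]$. Property 1 and the candidate exit time in property 2 then hold by definition. To produce $\boldsymbol{q}$, I would use the paper's earlier observation that $\mathcal{H}$ is generated by the $\boldsymbol{v}_{xy}$ with arbitrary real coefficients \emph{and} admits the relation $\sum_{(x,y)\in\mathcal{Z}}b_{xy}\boldsymbol{v}_{xy}=0$ for strictly positive integers $b_{xy}$. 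Writing $\boldsymbol{w}-\sum\mu_x(t)\gamma_{xy}\boldsymbol{v}_{xy}=\sum\tilde{\beta}_{xy}(t)\boldsymbol{v}_{xy}$ with uniformly bounded real $\tilde{\beta}_{xy}(t)$ (via a fixed right-inverse of the linear map $\alpha\mapsto\sum\alpha_{xy}\boldsymbol{v}_{xy}$), and setting $\beta_{xy}(t):=\tilde{\beta}_{xy}(t)+\lambda b_{xy}\geq 0$ for $\lambda$ large enough, $\alpha_{xy}(t):=\mu_x(t)\gamma_{xy}+\beta_{xy}(t)$, and $q_{xy}(t):=\alpha_{xy}(t)/\mu_x(t)$, one obtains $\sum\alpha_{xy}\boldsymbol{v}_{xy}=\boldsymbol{w}$ and hence $(\boldsymbol{\mu},\boldsymbol{q})\in\mathcal{T}_{\boldsymbol{m}}$. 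Strict positivity of $\mu_x(t)$ on $[0,T)$ (from $\boldsymbol{m}\in\mathcal{P}_\ast(\mathcal{X})$ and linear interpolation) makes $q_{xy}$ well-defined, $q_{xy}\geq\gamma_{xy}$ is immediate, and $\mu_x q_{xy}=\alpha_{xy}\leq\bar{c}$ for a constant $\bar{c}$ that depends only on $d$ and $\boldsymbol{\gamma}$.

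In the $\mathcal{P}_a(\mathcal{X})$ case, convexity keeps $\boldsymbol{\mu}(t)\in\mathcal{P}_a(\mathcal{X})$, so $q_{xy}(t)\leq\bar{c}/a$; continuity of $F_{xy}$ on $(0,\infty)$ from Lemma \ref{lemma:fnproperties2} bounds $F_{xy}(q_{xy}(t))$ uniformly by a constant depending on $a$, and the total cost is bounded by $c_a\|\boldsymbol{m}-\tilde{\boldsymbol{m}}\|$. For the joint continuity claim, given $\bar{\boldsymbol{m}},\tilde{\boldsymbol{m}}\in\mathcal{P}(\mathcal{X})$ with $\|\bar{\boldsymbol{m}}-\tilde{\boldsymbol{m}}\|\leq\delta$, I would first run the nominal dynamics from $\bar{\boldsymbol{m}}$ for a small time $\tau$ (zero $F$-cost, $R$-cost at most $\tau R_{\max}$) to reach $\boldsymbol{\nu}_1:=\boldsymbol{\nu}(\bar{\boldsymbol{m}},\tau)$, which by Remark \ref{away} lies in $\mathcal{P}_{b_1\tau^D}(\mathcal{X})$ within $b_2\tau$ of $\bar{\boldsymbol{m}}$, and then apply the straight-line control of the main construction from $\boldsymbol{\nu}_1$ to $\tilde{\boldsymbol{m}}$, of duration at most $\delta+b_2\tau$. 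Choosing $\tau$ as an appropriate function of $\delta$ (e.g.\ $\tau=\sqrt{\delta}$), and running the symmetric construction (reversing the roles of $\bar{\boldsymbol{m}}$ and $\tilde{\boldsymbol{m}}$) to bound $V_{\{\bar{\boldsymbol{m}}\}}(\tilde{\boldsymbol{m}})$, yields the claimed joint continuity.

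The main obstacle is the Phase-2 cost bound when $\tilde{\boldsymbol{m}}$ lies on $\partial\mathcal{P}(\mathcal{X})$: there $q_{xy}(t)$ grows like $1/\mu_x(t)$ while $F_{xy}$ grows superlinearly, so the integrand is singular at the endpoint. The upper bound on $F_{xy}$ supplied by Lemma \ref{lemma:fnproperties2} reduces this to estimating $\int_0^{T_2}\log(1/\mu_x(t))\,dt$, which is finite (since $-u\log u$ is bounded on $[0,1]$) but must be balanced carefully against the lower bound $\nu_{1,x}\geq b_1\tau^D$ from Remark \ref{away} so that the total cost vanishes as $\delta\downarrow 0$; this balancing between $\tau$ and $\delta$ is what fixes the choice $\tau=\sqrt{\delta}$ (or similar).
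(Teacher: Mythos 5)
Your proof is correct and follows the same two-step architecture as the paper's: build an explicit constant-speed straight-line path from $\boldsymbol{m}$ to $\tilde{\boldsymbol{m}}$, equipped with rates that reproduce its velocity and satisfy the three bounds; then, for the continuity of $V$, run the nominal dynamics of Remark~\ref{away} for a short warm-up phase of length $\tau=\tau(\delta)$ to move a general starting point into some $\mathcal{P}_a(\mathcal{X})$, splice on the straight-line control, and balance $\tau$ against $\delta$ so that the $\tau\log(1/\tau)$-type cost terms vanish.

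The one genuinely different ingredient is your construction of $\boldsymbol{q}$. You decompose the unit velocity $\boldsymbol{w}$ at \emph{each} time $t$ as the nominal drift $\sum\mu_x(t)\gamma_{xy}\boldsymbol{v}_{xy}$ plus a nonnegative correction $\sum\beta_{xy}(t)\boldsymbol{v}_{xy}$, where $\beta_{xy}(t)=\tilde\beta_{xy}(t)+\lambda b_{xy}\ge 0$ is produced by shifting a bounded right-inverse image by the positive cycle relation $\sum_{(x,y)}b_{xy}\boldsymbol{v}_{xy}=0$; this makes $q_{xy}(t)=\gamma_{xy}+\beta_{xy}(t)/\mu_x(t)\ge\gamma_{xy}$ and $\mu_x(t)q_{xy}(t)=\alpha_{xy}(t)\le\bar{c}$ immediate for \emph{all} $t\in[0,T)$. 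The paper instead solves the algebra only at $t=0$, producing constant fluxes $m_xq_{xy}\le\bar c$ (with $q_{xy}\ge\gamma_{xy}$ enforced by iteratively adding cycle increments $\max\{\gamma_{xy}\}/m_{x_i}$), and then defines $\tilde q_{xy}(t)=m_xq_{xy}/\tilde\mu_x(t)$. Both recipes give the same three properties and feed into the same cost estimate via Lemma~\ref{lemma:fnproperties2}, but your version makes the pointwise lower bound $q_{xy}(t)\ge\gamma_{xy}$ manifest at every $t$ — in the paper's construction this amounts to checking $m_xq_{xy}\ge\gamma_{xy}\tilde\mu_x(t)$, which holds only after noting that the cycle-addition step forces $m_xq_{xy}\ge\max\{\gamma_{xy}\}$ along modified cycles; your decomposition bypasses that check entirely. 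Your choice $\tau=\sqrt\delta$ is equally valid to the paper's $\tau=\delta$; any $\tau(\delta)\to 0$ works, since the controlling quantity $(\delta+b_2\tau)\log(1/(b_1\tau^D))$ tends to zero regardless.
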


\begin{proof}
Recall the definitions above (\ref{Hdef}), and let $\boldsymbol{m}\in 
\mathcal{P}_{\ast }(\mathcal{X}),\,\boldsymbol{\tilde{m}}\in \mathcal{P}(%
\mathcal{X})$. We can find a positive constant $\bar{c}$ that depend only
the dimension $d$ and on the rates $\boldsymbol{\gamma },$ and also rates $%
\boldsymbol{q}$ such that

\begin{equation*}
1.\, q_{xy}\geq \gamma _{xy},\hspace{16pt} 2.\,\sum_{(x,y)\in \mathcal{Z}%
}m_{x}q_{xy}\boldsymbol{v}_{xy}=\frac{\tilde{\boldsymbol{m}}-\boldsymbol{m}}{%
\Vert \tilde{\boldsymbol{m}}-\boldsymbol{m}\Vert },\hspace{16pt}3.\, \max
\{m_{x}q_{xy},(x,y)\in \mathcal{Z}\}\leq \bar{c}.
\end{equation*}

Indeed, since \eqref{Hdef} holds, we can find a constant $c<\infty $ such that for every
point $\boldsymbol{m}\in \mathcal{P}_{\ast }(\mathcal{X}),$ there exist
vectors $q_{xy}m_{x}\boldsymbol{v}_{xy}$ with $q_{xy}m_{x}\leq c,$ and $%
\sum_{(x,y)\in \mathcal{Z}}m_{x}q_{xy}\boldsymbol{v}_{xy}=\frac{\tilde{%
\boldsymbol{m}}-\boldsymbol{m}}{\Vert \tilde{\boldsymbol{m}}-\boldsymbol{m}%
\Vert }.$ Now if for some $(x_{1},y_{1})\in \mathcal{Z}$ we do not have $%
q_{x_{1}y_{1}}\geq \gamma _{x_{1}y_{1}},$ then by ergodicity we can pick $%
x_{1},x_{2}=y_{1},x_{3},\ldots ,x_{j},$ with $j\leq d,$ such that $%
\sum_{i=1}^{j-1}\boldsymbol{v}_{x_{i}x_{i+1}}=0.$ If we pick the new $%
q_{x_{i}x_{i+1}}$ equal to $\max_{xy}\{\gamma _{xy}\}/m_{x_{i}}$ plus the
original $q_{x_{i}x_{i+1}},$ then property 2 is still satisfied, but we now
also have $q_{x_{1}y_{1}}\geq \gamma _{x_{1}y_{1}}.$ We have to repeat the
procedure at most $|\mathcal{Z}|$ times to enforce property 1, and can then
set $\bar{c}\doteq \max \{m_{x}q_{xy},(x,y)\in \mathcal{Z}\}$.

Let 
\begin{equation}  \label{vare8hka}
\tilde{\boldsymbol{\mu}}(t)=[(\tilde{\boldsymbol{m}}-\boldsymbol{m})t/\Vert 
\tilde{\boldsymbol{m}}-\boldsymbol{m}\Vert +\boldsymbol{m}],
\end{equation}
and define $\tilde{\boldsymbol{q}}\in \mathcal{U}_{\boldsymbol{m}}$ by 
\begin{equation}
\tilde{\mu}_{x}(t)\tilde{q}_{xy}(t)=m_{x}q_{xy}\leq \bar{c}.
\label{linearcontrols}
\end{equation}%
Then automatically%
\begin{equation*}
\sum_{(x,y)\in \mathcal{Z}}\boldsymbol{v}_{xy}\int_{[0,t)}\tilde{\mu}_{x}(s)%
\tilde{q}_{xy}(s)ds=t\frac{\tilde{\boldsymbol{m}}-\boldsymbol{m}}{\Vert 
\tilde{\boldsymbol{m}}-\boldsymbol{m}\Vert }=\tilde{\boldsymbol{\mu }}(t)-%
\boldsymbol{m},
\end{equation*}%
and thus $(\tilde{\boldsymbol{\mu }},\tilde{\boldsymbol{q}})\in \mathcal{T}_{%
\boldsymbol{m}}$. This will lead to hitting $\{\tilde{\boldsymbol{m}}\}$ in
time $T_{\{\tilde{\boldsymbol{m}}\}}=\Vert \boldsymbol{m}-\tilde{\boldsymbol{%
m}}\Vert .$ Using properties stated in Lemma \ref{lemma:fnproperties2} we get%
\begin{equation*}
\begin{split}
& \inf_{(\boldsymbol{\mu },\boldsymbol{q})\in \mathcal{T}_{\boldsymbol{m}%
}}J_{\{\tilde{\boldsymbol{m}}\}}(\boldsymbol{m},\boldsymbol{\mu },%
\boldsymbol{q})\leq J_{\{\tilde{\boldsymbol{m}}\}}(\boldsymbol{m},\tilde{%
\boldsymbol{\mu }},\tilde{\boldsymbol{q}})\leq \sum_{(x,y)\in \mathcal{Z}%
}\int_{0}^{T_{\{\tilde{\boldsymbol{m}}\}}}\tilde{\mu}_{x}(t)F_{xy}(\tilde{q}%
_{xy}(t))+R_{\max }T_{\{\tilde{\boldsymbol{m}}\}} \\
& \overset{(\ref{linearcontrols})}{\leq }\!\!\!\!\!\sum_{(x,y)\in \mathcal{Z}%
}\!\int_{0}^{T_{\{\tilde{\boldsymbol{m}}\}}}\!\!\!\left(\! \tilde{\mu}_{x}(t)%
\tilde{q}_{xy}(t)\log \frac{\tilde{q}_{xy}(t)}{\min \left\{ \gamma
_{xy}\left( \gamma _{xy}/\tilde{q}_{xy}(t)\right) ^{1/p},M\right\} }%
+\max_{(x,y)\in \mathcal{Z}}\bar{M}(\gamma _{xy})\!\!\right) \!\!dt+R_{\max
}T_{\{\tilde{\boldsymbol{m}}\}} \\
& \leq \sum_{(x,y)\in \mathcal{Z}}\int_{0}^{T_{\{\tilde{\boldsymbol{m}}%
\}}}\left\vert \tilde{\mu}_{x}(t)\tilde{q}_{xy}(t)\log \tilde{q}%
_{xy}(t)\right\vert dt+\sum_{(x,y)\in \mathcal{Z}}\int_{0}^{T_{\{\tilde{%
\boldsymbol{m}}\}}}\left\vert \tilde{\mu}_{x}(t)\tilde{q}_{xy}(t)\log \left(
\gamma _{xy}/\tilde{q}_{xy}(t)\right) ^{1/p}\right\vert dt+ \\
& +\sum_{(x,y)\in \mathcal{Z}}\int_{0}^{T_{\{\tilde{\boldsymbol{m}}%
\}}}\left\vert \tilde{\mu}_{x}(t)\tilde{q}_{xy}(t)\log \gamma
_{xy}\right\vert dt+\sum_{(x,y)\in \mathcal{Z}}\int_{0}^{T_{\{\tilde{%
\boldsymbol{m}}\}}}\left\vert \tilde{\mu}_{x}(t)\tilde{q}_{xy}(t)\log
M\right\vert dt+c^{\prime }T_{\{\tilde{\boldsymbol{m}}\}} \\
& \overset{(\ref{linearcontrols})}{\leq }\bar{c}\sum_{(x,y)\in \mathcal{Z}%
}\int_{0}^{T_{\{\tilde{\boldsymbol{m}}\}}}|\log \tilde{q}_{xy}(t)|dt+\bar{c}%
\sum_{(x,y)\in \mathcal{Z}}\int_{0}^{T_{\{\tilde{\boldsymbol{m}}%
\}}}\left\vert \log \left( \gamma _{xy}/\tilde{q}_{xy}(t)\right)
^{1/p}\right\vert dt+c^{\prime \prime }T_{\{\tilde{\boldsymbol{m}}\}} \\
& \overset{(\ref{linearcontrols})}{\leq }\bar{c}\sum_{(x,y)\in \mathcal{Z}%
}\int_{0}^{T_{\{\tilde{\boldsymbol{m}}\}}}\left\vert \log \frac{m_{x}q_{xy}}{%
\tilde{\mu}_{x}(t)}\right\vert dt+\bar{c}\sum_{(x,y)\in \mathcal{Z}%
}\int_{0}^{T_{\{\tilde{\boldsymbol{m}}\}}}\left\vert \log \left( \tilde{\mu}%
_{x}(t)\gamma _{xy}/m_{x}q_{xy}\right) ^{1/p}\right\vert dt+c^{\prime \prime
}T_{\{\tilde{\boldsymbol{m}}\}} \\
& \overset{(\ref{linearcontrols})}{\leq }\bar{c}\sum_{(x,y)\in \mathcal{Z}%
}\int_{0}^{T_{\{\tilde{\boldsymbol{m}}\}}}\left\vert \log \tilde{\mu}%
_{x}(t)\right\vert dt+\bar{c}\sum_{(x,y)\in \mathcal{Z}}\int_{0}^{T_{\{%
\tilde{\boldsymbol{m}}\}}}\frac{1}{p}\left\vert \log \tilde{\mu}%
_{x}(t)\right\vert dt+c^{\prime \prime \prime }T_{\{\tilde{\boldsymbol{m}}%
\}},
\end{split}%
\end{equation*}%
where the constants $c^{\prime },c^{\prime \prime },c^{\prime \prime \prime
} $ depend only on $\boldsymbol{\gamma },\bar{c}$ and $R_{\max }.
$

Now if $\boldsymbol{m},\tilde{\boldsymbol{m}}\in \mathcal{P}_{a}(\mathcal{X}%
) $, then all elements are bounded by a constant $c_{a}$ (that depends on $%
\gamma ,\bar{c},R_{\max },$ and $a$) times $T_{\{\tilde{\boldsymbol{m}}%
\}}=\Vert \tilde{\boldsymbol{m}}-\boldsymbol{m}\Vert $, and therefore the
first part of the theorem follows.

Let $1>\delta >0,$ and $\bar{\boldsymbol{m}},\tilde{\boldsymbol{m}}\in 
\mathcal{P}(\mathcal{X}),$ with $\Vert \bar{\boldsymbol{m}}-\tilde{%
\boldsymbol{m}}\Vert <\delta .$ We take $\boldsymbol{m}=\boldsymbol{\nu} (\bar{%
\boldsymbol{m}},\delta ),$ where $\boldsymbol{\nu }(\bar{\boldsymbol{m}},t)$
is the solution of $\dot{\boldsymbol{\nu }}(t)=\boldsymbol{\nu }(t)%
\boldsymbol{\gamma },$ with initial data $\boldsymbol{\nu }(0)=\bar{%
\boldsymbol{m}}$. Now by appropriate use of the inequality $\tilde{\mu}%
_{x}(t)\geq \min \{m_{x},m_{x}(T_{\{\tilde{\boldsymbol{m}}\}}-t)\},$ that we
get from (\ref{vare8hka}), and using the last display, we get%
\begin{equation*}
V_{\{\tilde{\boldsymbol{m}}\}}(\boldsymbol{m})\leq c^{\prime \prime \prime
\prime }\left( \sum_{(x,y)\in \mathcal{Z}}\int_{0}^{T_{\{\tilde{\boldsymbol{m%
}}\}}}\left( |\log m_{x}|+|\log (T_{\{\tilde{\boldsymbol{m}}\}}-t)|\right)
dt+T_{\{\tilde{\boldsymbol{m}}\}}\right) .
\end{equation*}%
By a simple change of variable and Remark \ref{away}, we have%
\begin{equation*}
V_{\{\tilde{\boldsymbol{m}}\}}(\boldsymbol{m})\leq c^{\prime \prime \prime
\prime }\left( \sum_{(x,y)\in \mathcal{Z}}\int_{0}^{b_{2}\delta }\left(
|\log b_{1}\delta ^{D}|+|\log t|\right) dt+b_{2}\delta \right) .
\end{equation*}
Therefore 
\begin{equation*}
V_{\{\tilde{\boldsymbol{m}}\}}(\bar{\boldsymbol{m}})\leq V_{\{\boldsymbol{m}%
\}}(\bar{\boldsymbol{m}})+V_{\{\tilde{\boldsymbol{m}}\}}(\boldsymbol{m})\leq
\delta R_{\max }+c^{\prime \prime \prime \prime }\left( \sum_{(x,y)\in 
\mathcal{Z}}\int_{0}^{b_{2}\delta }\left( |\log b_{1}\delta ^{D}|+|\log
t|\right) dt+b_{2}\delta \right) ,
\end{equation*}%
and the right hand side can be made as small as desired by making $\delta $
small enough. {The estimate for $V_{\{\bar{\boldsymbol{m}}\}}(\tilde{%
\boldsymbol{m}})$ is proved in a symmetric way.} This proves the last
statement of the theorem.
\end{proof}

\section{Lower bound}

For the proof of Theorem \ref{main2}, we first prove the lower bound: for
every sequence $\boldsymbol{m}^{n}\in \mathcal{P}^{n}(\mathcal{X})$ and $%
\boldsymbol{m}\in \mathcal{P}(\mathcal{X}),$ with $\boldsymbol{m}%
^{n}\rightarrow \boldsymbol{m},$ we have

\begin{equation*}
\liminf_{n\rightarrow \infty }V_{K}^{n}(\boldsymbol{m}^{n})\geq V_{K}(%
\boldsymbol{m}).
\end{equation*}%
Without loss of generality we can assume that the liminf is actually a
limit, otherwise we can just work with a subsequence. If the limit is $%
\infty $ then the conclusion is trivial, therefore we can assume that there
is $c\in \mathbb{R}$ such that 
\begin{equation}
\sup_{n\in \mathbb{N}}V_{K}^{n}(\boldsymbol{m}^{n})\leq c.
\label{eq:valuefuntionbound}
\end{equation}%
Let $\epsilon \in (0,1)$. Recalling (\ref{eqdef:Vn}), let $\boldsymbol{q}%
^{n}\in \mathcal{A}_{b}^{n,|\mathcal{Z}|}$ be such that

\begin{equation}
\mathbb{E}_{\boldsymbol{m}^{n}}\left[ \int_{0}^{T^{n}}\left( \sum_{(x,y)\in 
\mathcal{Z}}\mu _{x}^{n}(t)F_{xy}(q_{xy}^{n}(t))+R(\boldsymbol{\mu }%
^{n}(t))\right) dt\right] <V_{K}^{n}(\boldsymbol{m}^{n})+\epsilon ,
\label{eq:epsilonbound}
\end{equation}%
where $\boldsymbol{\mu }^{n}=h^{n}\left( \boldsymbol{\mu }^{n},\boldsymbol{q}%
^{n},\boldsymbol{m}^{n},\boldsymbol{N}^{n}/n\right) $ and $T^{n}\doteq \inf
\left\{ t\in \lbrack 0,\infty ]:\boldsymbol{\mu }^{n}(t)\in K\right\} .$ For 
$\delta >0$ such that 
\begin{equation*}
\Vert \bar{\boldsymbol{m}}-\tilde{\boldsymbol{m}}\Vert \leq \delta
\Rightarrow V_{\bar{\boldsymbol{m}}}(\tilde{\boldsymbol{m}})\leq \epsilon ,
\end{equation*}%
\begin{equation*}
\hspace{-16pt}\text{we define }\hspace{38pt} K_{\delta }\doteq \{m:d(\boldsymbol{m},K)\leq \delta \}\hspace{16pt}\text{and%
}\hspace{16pt}T^{n,\delta }\doteq \inf \{t\in \lbrack 0,\infty ]:\boldsymbol{%
\mu }_{n}(t)\in K_{\delta }\}.
\end{equation*}%
The existence of such a $\delta $ is given by Theorem \ref{lemma:uniFUont}.
Now for $\boldsymbol{\mu }^{n},\boldsymbol{q}^{n}$ as in %
\eqref{eq:epsilonbound} and $T^{n,\delta }$ as above, we define the
sequences $\boldsymbol{\mu }^{n,\delta }(t)=\boldsymbol{\mu }^{n}(t\wedge
T^{n,\delta }),$ 
\begin{equation*}
\boldsymbol{q}^{n,\delta }(t)=%
\begin{cases}
\boldsymbol{q}^{n}(t) & t\leq T^{n,\delta } \\ 
\boldsymbol{\gamma } & T>T^{n,\delta }%
\end{cases}%
.
\end{equation*}%
We note that for $t>T^{\delta },$ $\boldsymbol{q}^{n,\delta }(t)$ does not
actually generate $\boldsymbol{\mu }^{n},$ but we define it this way to
simplify some arguments later on. We will show that 
\begin{equation}
\begin{split}
& \liminf_{n\rightarrow \infty }\mathbb{E}_{\boldsymbol{m}^{n}}\left[
\int_{0}^{T^{n}}\left( \sum_{(x,y)\in \mathcal{Z}}\mu
_{x}^{n}(t)F_{xy}(q_{xy}^{n}(t))+R(\boldsymbol{\mu }^{n}(t))\right) dt\right]
\geq \\
& \liminf_{n\rightarrow \infty }\mathbb{E}_{\boldsymbol{m}^{n}}\left[
\int_{0}^{T^{n,\delta }}\left( \sum_{(x,y)\in \mathcal{Z}}\mu _{x}^{n,\delta
}(t)F_{xy}(q_{xy}^{n,\delta }(t))+R(\boldsymbol{\mu }^{n,\delta }(t))\right)
dt\right] \geq V_{K_{\delta }}(\boldsymbol{m}),
\end{split}
\label{twopartsproof}
\end{equation}%
and then by an application of Theorem \ref{lemma:uniFUont} and %
\eqref{eq:epsilonbound} deduce $\lim_{n\rightarrow \infty }V_{K}^{n}(%
\boldsymbol{m}^{n})+2\epsilon \geq V_{K}(\boldsymbol{m})$. Since $\epsilon $
is arbitrary the lower bound will follow. The first inequality in (\ref%
{twopartsproof}) is true since $F_{xy}\geq 0,R\geq 0$ and $T^{n,\delta }\leq
T^{n}$. Therefore only the second inequality needs to be proved.

Before proceeding we introduce some auxiliary random measures. For $(x,y)\in 
\mathcal{Z},\,q_{xy}\in \mathcal{F}([0,\infty );[0,\infty )),$ and $t\in
\lbrack 0,\infty ),$ define 
\begin{equation*}
\eta _{xy}(dr;t)\doteq \delta _{q_{xy}(t)}(dr)\mu _{x}(t).
\end{equation*}%
For each $t\in \lbrack 0,\infty ),$ $(x,y)\in \mathcal{Z}$ we have that $%
\eta _{xy}(\cdot ;t)$ is a subprobability measure on $[0,\infty )$. Also we
consider the measures $\theta _{xy}(drdt)=\eta _{xy}(dr;t)dt$ on $[0,\infty
)\times \lbrack 0,\infty )$ as equipped with the topology that generalizes
the weak convergence of probability measures to general measures that have
at most mass $T$ on $[0,\infty )\times \lbrack 0,T]$. This can be defined in
terms of a distance (a generalization of the Prohorov metric) $d_{T}$,%
and the metric on measures on $[0,\infty
)\times \lbrack 0,\infty )$ is 
\begin{equation}
\sum_{T\in \mathbb{N}}2^{-T}\left[ d_{T}(\boldsymbol{\mu }|_{T},\boldsymbol{%
\nu }|_{T})\vee 1\right] ,  \label{eqdef:topology}
\end{equation}%
where $\boldsymbol{\mu }|_{T}$ denotes the restriction to $[0,T]$ in the
last variable.

Let $\boldsymbol{\theta }^{n,\delta }=\{\theta ^{n,\delta }\}_{(x,y)\in 
\mathcal{Z}}$ be the random measures that correspond to $\boldsymbol{\mu }%
^{n,\delta },\boldsymbol{q}^{n,\delta },$ according to the construction
above. We observe that

\begin{equation*}
\boldsymbol{\mu }^{n,\delta }(t)=\boldsymbol{m}^{n}+\sum_{(x,y)\in \mathcal{Z%
}}\boldsymbol{v}_{xy}\int_{0}^{t\wedge T^{n,\delta }}\int_{0}^{\infty
}r\theta _{xy}^{n,\delta }(drds)+\text{ a martingale},
\end{equation*}%
where the martingale will converge to zero as $n\rightarrow \infty $, and
that for every $(x,y)\in \mathcal{Z},$

\begin{equation}  \label{eqdef:nucost}
\mathbb{E}_{\boldsymbol{m}^{n}}\left[\int_{0}^{T^{n,\delta}}F_{xy}(q^{n,%
\delta}_{xy}(t))\mu_{x}^{n,\delta}(t)dt\right]=\mathbb{E}_{\boldsymbol{m}%
^{n}}\left[\int_{0}^{T^{n,\delta}}\int_{0}F_{xy}(r)\theta_{xy}^{n,%
\delta}(drdt)\right].
\end{equation}

We will split the proof of \eqref{twopartsproof} in three parts. First we
prove that $(\boldsymbol{\mu }^{n,\delta },\boldsymbol{\theta }^{n,\delta
},T^{n,\delta })$ is tight. Then we show that for every limit point $(%
\boldsymbol{\mu }^{\delta },\boldsymbol{\theta }^{\delta },T^{\delta }),$ $%
\theta _{xy}^{\delta }$ has the decomposition $\theta _{xy}^{\delta
}(drdt)=\eta _{xy}^{\delta }(dr;t)dt,$ with $\sum_{y\in \mathcal{X}}\eta
_{xy}^{\delta }([0,\infty );t)=\mu _{x}^{\delta }(t),$ and for $\boldsymbol{q%
}^{\delta }$ defined by $\mu _{x}^{\delta }(t)q_{xy}^{\delta
}(t)=\int_{0}^{\infty }r\eta _{xy}^{\delta }(dr;t),$ that

\begin{equation*}
\boldsymbol{\mu }^{\delta }(t)=\boldsymbol{m}+\sum_{(x,y)\in \mathcal{Z}}%
\boldsymbol{v}_{xy}\int_{0}^{t\wedge T^{\delta }}\int_{0}^{\infty }r\theta
_{xy}^{\delta }(drds)=\boldsymbol{m}+\sum_{(x,y)\in \mathcal{Z}}\boldsymbol{v%
}_{xy}\int_{0}^{t\wedge T^{\delta }}\mu _{x}^{\delta }(s)q_{xy}^{\delta
}(s)ds.
\end{equation*}%
Finally, by an application of Fatou's Lemma, for such a $\boldsymbol{q}%
^{\delta },$ we get 
\begin{equation*}
\begin{split}
& \liminf_{n\rightarrow \infty }\mathbb{E}_{\boldsymbol{m}^{n}}\!\!\left[\!
\int_{0}^{T^{n,\delta }}\!\!\!\!\!\int_{0}^{\infty }\!\!\!F_{xy}(r)\theta _{xy}^{n,\delta
}(drdt)\right] \!\!\geq\! \mathbb{E}_{\boldsymbol{m}}\!\!\left[ \int_{0}^{T^{\delta
}}\!\!\!\int_{0}^{\infty }\!\!\!\!F_{xy}(r)\theta _{xy}^{\delta }(drdt)\right]\!\!  \geq\!\! \mathbb{E}_{\boldsymbol{m}}\left[ \int_{0}^{T^{\delta
}}\!\!\!\!\int_{0}^{\infty }F_{xy}(r)\eta _{xy}^{\delta }(dr;t)dt\right] \\&\geq 
\mathbb{E}_{\boldsymbol{m}}\left[ \int_{0}^{T^{\delta }}F_{xy}\left(
\int_{0}^{\infty }r\frac{\eta _{xy}^{\delta }(dr;t)}{\eta _{xy}^{\delta
}([0,\infty );t)}\right) \eta _{xy}^{\delta }([0,\infty );t)dt\right]=\mathbb{E}_{\boldsymbol{m}}\left[ \int_{0}^{T^{\delta }}F
_{xy}(q_{xy}^{\delta }(t))\mu _{x}^{\delta }(t)dt\right] ,
\end{split}%
\end{equation*}%
where for the third estimate, we applied Jensen's inequality. Together with $%
\boldsymbol{\mu }^{n,\delta }\rightarrow \boldsymbol{\mu }^{\delta },F_{xy},
R\geq 0$ and another application of Fatou's Lemma, this gives %
\eqref{twopartsproof}.

\subsection{Tightness of $(\boldsymbol{\protect\mu}^{n,\protect\delta},%
\boldsymbol{\protect\theta}^{n,\protect\delta},T^{n,\protect\delta})$}

\label{section:nu2}

First, we prove that $(\boldsymbol{\mu }^{n,\delta }(\cdot ),T^{n,\delta })$%
, which takes values in $D([0,\infty );\mathcal{P}(\mathcal{X}))\times
\lbrack 0,\infty )\subset D([0,\infty );\mathbb{R}^{d})\times \lbrack
0,\infty ),$ is tight. For that, we introduce some auxiliary random
variables $\tilde{\boldsymbol{\mu }}^{n,\delta }$ in $D([0,\infty );\mathbb{R%
}^{d}),$ to compare with $\boldsymbol{\mu }^{n,\delta }$, given by 
\begin{equation}  \label{eqdef:upmu2}
\tilde{\boldsymbol{\mu}}^{n,\delta}(t)=\boldsymbol{m}^{n}+\sum_{(x,y)\in%
\mathcal{Z}}\boldsymbol{v}_{xy}\int_{0}^{t\wedge
T^{n,\delta}}\mu^{n}_{x}(s)q^{n}_{xy}(s)ds.
\end{equation}

Since $\gamma _{xy}\ell \left( \cdot /\gamma _{xy}\right) \leq F_{xy}(\cdot
),$ recalling (\ref{eq:valuefuntionbound}), (\ref{eq:epsilonbound}) and that
$R$ is bounded away from zero in $K_{\delta }=\{%
\boldsymbol{m}:d(\boldsymbol{m},K)\geq \delta \}$ by a constant $R_{\min
}^{\delta },$ we get

\begin{equation}
\mathbb{E}_{\boldsymbol{m}^{n}}\left[ \int_{0}^{T^{n,\delta }}\left(
\sum_{(x,y)\in \mathcal{Z}}\mu _{x}^{n}(t)\gamma _{xy}\ell \left( \frac{%
q_{xy}^{n}(t)}{\gamma _{xy}}\right) \right) dt+R_{\min }^{\delta
}T^{n,\delta }\right] \leq c+1,  \label{eq:npathellbound}
\end{equation}%
which shows tightness of $\{T^{n,\delta }\}$. By setting $\gamma _{\max
}=\max \{\gamma _{xy}:(x,y)\in \mathcal{Z}\},$ we get

\begin{equation*}
\mathbb{E}_{\boldsymbol{m}^{n}}\left[ \int_{0}^{T^{n,\delta }}\left(
\sum_{(x,y)\in \mathcal{Z}}\gamma _{\max }\frac{\mu _{x}^{n}(t)\gamma _{xy}}{%
\gamma _{\max }}\ell \left( \frac{q_{xy}^{n}(t)}{\gamma _{xy}}\right)
\right) dt+R_{\min }^{\delta }T^{n,\delta }\right] \leq c+1.
\end{equation*}%
Using the fact that $\ell $ is convex and $\ell (1)=0,$ by Jensen's
inequality $a\ell (b)\geq \ell (ab+1-a)$ for $a\in \lbrack 0,1]$ and $b\geq
0.$ By setting $a=\frac{\mu _{x}^{n}(t)\gamma _{xy}}{\gamma _{\max }},$ the
inequality above gives

\begin{equation*}
\mathbb{E}_{\boldsymbol{m}^{n}}\left[ \int_{0}^{T^{n,\delta }}\left(
\sum_{(x,y)\in \mathcal{Z}}\gamma _{\max }\ell \left( \frac{\mu _{x}^{n}(t)}{%
\gamma _{\max }}q_{xy}^{n}(t)+1-\frac{(\mu _{x}^{n}(t)\gamma _{xy})}{\gamma
_{\max }}\right) \right) dt+R_{\min }^{\delta }T^{n,\delta }\right] \leq c+1.
\end{equation*}%
By applying Jensen's inequality once more 
\begin{equation*}
\mathbb{E}_{\boldsymbol{m}^{n}}\left[ \int_{0}^{T^{n,\delta }}|\mathcal{Z}%
|\gamma _{\max }\ell \left( \frac{1}{|\mathcal{Z}|\gamma _{\max }}%
\sum_{(x,y)\in \mathcal{Z}}\mu _{x}^{n}(t)q_{xy}^{n}(t)+\sum_{(x,y)\in 
\mathcal{Z}}\left[ 1-\frac{(\mu _{x}^{n}(t)\gamma _{xy})}{|\mathcal{Z}%
|\gamma _{\max }}\right] \right) dt+R_{\min }^{\delta }T^{n,\delta }\right]
\leq c+1.
\end{equation*}

Now by multiplying with $\frac{1}{\mathcal{|\mathcal{Z}|}\gamma _{\max }},$
using (\ref{eqdef:upmu2}) and the fact that $q\leq q^{\prime }$ implies $%
\ell (q)\leq \ell (q^{\prime })+1$, we get

\begin{equation*}
\mathbb{E}_{\boldsymbol{m}^{n}}\left[ \int_{0}^{T^{n,\delta }}\ell \left( 
\frac{|\dot{\tilde{\boldsymbol{\mu }}}^{n,\delta }(t)|}{|\mathcal{Z}|\gamma
_{\max }}\right) dt+\left( \frac{1}{|\mathcal{Z}|\gamma _{\max }}R_{\min
}^{\delta }-1\right) T^{n,\delta }\right] \leq \frac{c+1}{|\mathcal{Z}%
|\gamma _{\max }}.
\end{equation*}%
Finally, by using that for every $\bar{c}>0$ there exists $%
c_{1}>0,c_{2}<\infty $ such that $\ell (\bar{c}q)\geq c_{1}\ell (q)-c_{2},$
we get%
\begin{equation*}
\mathbb{E}_{\boldsymbol{m}^{n}}\left[ \int_{0}^{T^{n,\delta }}c_{1}\ell
\left( |\dot{\tilde{\boldsymbol{\mu }}}^{n,\delta }(t)|\right) dt+\left( 
\frac{1}{|\mathcal{Z}|\gamma _{\max }}R_{\min }^{\delta }-1-c_{2}\right)
T^{n,\delta }\right] \leq \frac{c+1}{|\mathcal{Z}|\gamma _{\max }},
\end{equation*}%
which implies

\begin{equation*}
\mathbb{E}_{\boldsymbol{m}^{n}}\left[ \int_{0}^{T^{n,\delta }}\ell \left( |%
\dot{\tilde{\boldsymbol{\mu }}}^{n,\delta }(t)|\right) dt+\frac{1}{|\mathcal{%
Z}|\gamma _{\max }c_{1}}R_{\min }^{\delta }T^{n,\delta }\right] \leq \frac{%
c+1}{|\mathcal{Z}|\gamma _{\max }c_{1}}+\frac{(c_{2}+1)}{c_{1}}\mathbb{E}_{%
\boldsymbol{m}^{n}}[T^{n,\delta }]\leq c^{\prime },  \label{eqn:tildebound}
\end{equation*}%
where 
\begin{equation}\label{eqdef:c'}c^{\prime }=\frac{c+1}{|\mathcal{Z}|\gamma _{\max }c_{1}}+\frac{%
(c+1)(c_{2}+1)}{c_{1}}. \end{equation} It will follow from the following lemma that $\tilde{\boldsymbol{\mu }}%
^{n,\delta }$ is a tight sequence in $D([0,\infty );\mathbb{R}^{d})$. Let $%
\mathcal{S}$ be the elements $(\boldsymbol{\mu },T)$ of $C([0,\infty );%
\mathcal{P}(\mathcal{X}))\times \lbrack 0,\infty )$ that satisfy $%
\boldsymbol{\mu }(t)=\boldsymbol{\mu }(T)$ for $t\geq T$.

\begin{lemma}
\label{lemma:tightnessupmu} For every positive number $a,$ the function 
\begin{equation*}
H(\boldsymbol{\mu },T)=%
\begin{cases}
\int_{0}^{T}\ell \left( |\dot{\boldsymbol{\mu }}(t)|\right) dt+aT, & 
\boldsymbol{\mu }\in AC([0,\infty );\mathbb{R}^{d}),T\in \lbrack 0,\infty )
\\ 
\infty , & otherwise,%
\end{cases}%
\end{equation*}%
is a tightness function on $\mathcal{S},$ where $AC([0,\infty );\mathbb{R}%
^{d})$ is the set of all absolutely continuous functions from $[0,\infty )$
to $\mathbb{R}^{d}$.
\end{lemma}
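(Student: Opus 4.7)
My plan is to show directly that every sublevel set $\{(\boldsymbol{\mu},T)\in\mathcal{S} : H(\boldsymbol{\mu},T)\le M\}$ is relatively compact in $\mathcal{S}$. Note that pointwise boundedness of the $\boldsymbol{\mu}$-component is automatic since $\boldsymbol{\mu}(t)\in\mathcal{P}(\mathcal{X})$, which is compact. So the two things that need to be extracted from the bound $H(\boldsymbol{\mu},T)\le M$ are: (i) a uniform bound on $T$; and (ii) equicontinuity of the absolutely continuous paths $\boldsymbol{\mu}$ on $[0,\infty)$.

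The bound on $T$ is immediate from $aT\le H(\boldsymbol{\mu},T)\le M$, giving $T\le M/a$. Thus any sequence $\{(\boldsymbol{\mu}^n,T^n)\}$ in the sublevel set has $\{T^n\}\subset[0,M/a]$, which is compact, so we may pass to a subsequence with $T^n\to T^\infty\in[0,M/a]$.

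For equicontinuity, the plan is to combine Jensen's inequality with the superlinearity of $\ell$. For $0\le s<t$, since $\boldsymbol{\mu}$ is absolutely continuous with $\boldsymbol{\mu}(t)=\boldsymbol{\mu}(t\wedge T)$,
\begin{equation*}
|\boldsymbol{\mu}(t)-\boldsymbol{\mu}(s)|\le \int_s^t|\dot{\boldsymbol{\mu}}(r)|\,dr.
\end{equation*}
Because $\ell$ is convex and superlinear, for every $K>0$ the Legendre transform $c_K := \sup_{q\ge 0}(Kq-\ell(q))$ is finite, so $\ell(q)\ge Kq-c_K$ for all $q\ge 0$. Integrating on $[s,t]$ (and extending $\dot{\boldsymbol{\mu}}\equiv 0$ past $T$, which is consistent with $\boldsymbol{\mu}(t)=\boldsymbol{\mu}(T)$ for $t\ge T$) yields
\begin{equation*}
K|\boldsymbol{\mu}(t)-\boldsymbol{\mu}(s)|\le K\int_s^t|\dot{\boldsymbol{\mu}}(r)|\,dr\le \int_0^T\ell(|\dot{\boldsymbol{\mu}}(r)|)\,dr+c_K(t-s)\le M+c_K(t-s).
\end{equation*}
Given $\epsilon>0$, choose $K=2M/\epsilon$, and then $\delta=K\epsilon/(2c_K)$, so that $t-s<\delta$ forces $|\boldsymbol{\mu}(t)-\boldsymbol{\mu}(s)|\le\epsilon$, uniformly in $(\boldsymbol{\mu},T)$ in the sublevel set. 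This equicontinuity is valid on all of $[0,\infty)$ (and not just on $[0,T]$) precisely because of the stopping convention built into $\mathcal{S}$.

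By Arzel\`a--Ascoli (applied on each compact interval and combined with the constant behavior past $T^n\le M/a$), the family of $\boldsymbol{\mu}$'s is relatively compact in $C([0,\infty);\mathcal{P}(\mathcal{X}))$ with the topology of uniform convergence on compact sets. Combined with the compactness of $\{T^n\}\subset[0,M/a]$, we obtain a subsequence with $(\boldsymbol{\mu}^n,T^n)\to(\boldsymbol{\mu}^\infty,T^\infty)$ in $C([0,\infty);\mathcal{P}(\mathcal{X}))\times[0,\infty)$. The only remaining point is that the limit lies in $\mathcal{S}$, i.e.\ that $\boldsymbol{\mu}^\infty(t)=\boldsymbol{\mu}^\infty(T^\infty)$ for $t\ge T^\infty$; this follows because for $t>T^\infty$ we have $T^n<t$ for all large $n$, hence $\boldsymbol{\mu}^n(t)=\boldsymbol{\mu}^n(T^n)$, and uniform convergence together with continuity of the limit lets us pass $n\to\infty$ to conclude $\boldsymbol{\mu}^\infty(t)=\boldsymbol{\mu}^\infty(T^\infty)$. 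I expect the main (and only real) delicate step to be verifying this equicontinuity uniformly in the sublevel set; the argument above isolates it as a clean consequence of Jensen plus the linear lower bound on $\ell$ coming from its superlinearity.
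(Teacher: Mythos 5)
Your proof is correct and takes essentially the same route as the paper: both bound $T$ by $M/a$ (the paper's $c_2/c_1$), derive equicontinuity from a linear-in-$q$ lower bound on $\ell$ coming from its superlinearity, and invoke Arzel\`a--Ascoli. Where you use the Young--Fenchel inequality $\ell(q)\ge Kq-c_K$ with $c_K=\sup_q(Kq-\ell(q))=e^K-1$, the paper uses the equivalent bound $b\le e^c+\ell(b)/c$; your added check that the limit satisfies the stopping convention $\boldsymbol{\mu}^\infty(t)=\boldsymbol{\mu}^\infty(T^\infty)$ for $t\ge T^\infty$ is a small point the paper leaves implicit, and note that the ``Jensen'' in your plan paragraph is really just this Fenchel--Young bound, not Jensen's inequality.
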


The proof of this lemma is in Appendix \ref{appendix:tightness}. {Using the bound (\ref{eqn:tildebound}%
), it follows from Lemma \ref{lemma:tightnessupmu}} that $\{\tilde{%
\boldsymbol{\mu }}^{n,\delta }\}$ is tight in $D([0,\infty );\mathbb{R}^{d})$%
. Now we have that 
\begin{equation*}
|\boldsymbol{\mu }^{n,\delta }(t)-\tilde{\boldsymbol{\mu }}^{n,\delta
}(t)|\leq \sum_{(x,y)\in \mathcal{Z}}\left\vert \int_{0}^{t\wedge
T^{n,\delta }}\mu _{x}^{n}(s)q_{xy}^{n}(s)ds-\int_{0}^{t\wedge T^{n,\delta
}}\int_{0}^{\infty }1_{[0,\mu _{x}^{n}(s)q_{xy}^{n}(s)]}(r)\frac{1}{n}%
N_{xy}^{n}(dsdr)\right\vert ,
\end{equation*}%
where the summands on the right side, denoted from now on by $%
Q_{xy,t}^{n,\delta },$ are all martingales with quadratic variation $\mathbb{%
Q}_{xy,t}^{n,\delta }$ that is bounded above by

\begin{equation*}
\begin{split}
& \frac{1}{n^{2}}\mathbb{E}_{\boldsymbol{m}^{n}}\left[ \int_{0}^{t\wedge
T^{n,\delta }}\int_{0}^{\infty }1_{[0,\mu
_{x}^{n}(s)q_{xy}^{n}(s)]}(r)N_{xy}^{n}(dsdr)\right] =\frac{1}{n}\mathbb{E}_{%
\boldsymbol{m}^{n}}\left[ \int_{0}^{t\wedge T^{n,\delta }}\mu
_{x}^{n}(s)q_{xy}^{n}(s)ds\right] \\
& \leq \frac{1}{n}\mathbb{E}_{\boldsymbol{m}^{n}}\left[ \int_{0}^{t\wedge
T^{n,\delta }}(\ell (\mu _{x}^{n}(s)q_{xy}^{n}(s))+e)ds\right] \overset{(\ref%
{eqdef:c'})}{\leq }\frac{c^{\prime }+e\mathbb{E}_{\boldsymbol{m}%
^{n}}[T^{n,\delta }\wedge t]}{n}\leq \frac{c^{\prime }+e\mathbb{E}_{%
\boldsymbol{m}^{n}}[T^{n,\delta }]}{n}\overset{(\ref{eq:npathellbound})}{%
\leq }\frac{\left( \frac{(c+1)e}{R_{\min }^{\delta }}+c^{\prime }\right) }{n}%
,
\end{split}%
\end{equation*}%
where in the first inequality of the last line, the estimate $ab\leq
e^{a}+\ell (b),$ with $a=1,b=\mu _{x}^{n}(s)q_{xy}^{n}(s)$ was used. By
using the Burkholder-Gundy-Davis inequality, for every $T\in (0,\infty )$

\begin{equation}
\mathbb{E}_{\boldsymbol{m}^{n}}\left[ \sup_{t\in \lbrack
0,T]}|Q_{xy,t}^{n,\delta }|\right] \leq c_{BGD}\mathbb{E}_{\boldsymbol{m}%
^{n}}[\mathbb{Q}_{xy}^{n,\delta }]_{T}^{1/2}\leq c_{BGD}\sqrt{\frac{\left( 
\frac{(c+1)e}{R_{\min }^{\delta }}+c^{\prime }\right) }{n}},  \label{BGD}
\end{equation}%
from which we get that $\mathbb{E}_{\boldsymbol{m}^{n}}[\sup_{t\in \lbrack
0,T]}|Q_{xy,t}^{n,\delta }|]$ converges to zero as $n\rightarrow \infty $.
Recalling that {we} already proved $\{\tilde{\boldsymbol{\mu }}^{n,\delta
}\} $ is tight in $D([0,\infty );\mathbb{R}^{d})$, it follows from $\mathbb{E%
}_{\boldsymbol{m}^{n}}\left[ d(\boldsymbol{\mu }^{n,\delta },\tilde{%
\boldsymbol{\mu }}^{n,\delta })\right] \rightarrow 0$ that $\{(\boldsymbol{%
\mu }^{n,\delta },T^{n,\delta })\}$ is tight as well.

To show that the variable $\boldsymbol{\theta }^{n,\delta }$ is tight, we
combine (\ref{eqdef:nucost}) and (\ref{eq:valuefuntionbound}), (\ref%
{eq:epsilonbound}) and use the monotonicity with respect to $\delta$ to get
\begin{equation*}
\mathbb{E}_{\boldsymbol{m}^{n}}\left[ \sum_{(x,y)\in \mathcal{Z}%
}\int_{0}^{T^{n,\delta }}\int_{0}^{\infty }F_{xy}(r)\theta _{xy}^{n,\delta
}(drdt)+\int_{0}^{T^{n,\delta }}R(\boldsymbol{\mu }^{n,\delta }(t))\right]
<c+1.
\end{equation*}%
Since, by part 1 of Lemma \ref{lemma:fnproperties}, we have $\gamma
_{xy}\ell \left( \cdot /\gamma _{xy}\right) \leq F_{xy}(\cdot )$, and $%
q^{n,\delta }=\boldsymbol{\gamma }$ for $t>T^{n,\delta }$, we get

\begin{equation*}
\mathbb{E}_{\boldsymbol{m}^{n}}\left[ \sum_{(x,y)\in \mathcal{Z}%
}\int_{0}^{\infty }\int_{0}^{\infty }\gamma _{xy}\ell \left( \frac{r}{\gamma
_{xy}}\right) \theta _{xy}^{n,\delta }(drdt)\right] =\mathbb{E}_{\boldsymbol{%
m}^{n}}\left[ \sum_{(x,y)\in \mathcal{Z}}\int_{0}^{T^{n,\delta
}}\int_{0}^{\infty }\gamma _{xy}\ell \left( \frac{r}{\gamma _{xy}}\right)
\theta _{xy}^{n,\delta }(drdt)\right] <c+1.
\end{equation*}%
Now by using the fact that 
\begin{equation*}
\tilde{H}(\theta )=\int_{0}^{\infty }\int_{0}^{T}\gamma _{xy}\ell \left( 
\frac{r}{\gamma _{xy}}\right) \theta (drdt),
\end{equation*}%
is a tightness function on the space of measures on $[0,\infty )\times
\lbrack 0,T]$ with mass no greater than $T$, we conclude that for every $%
(x,y)\in \mathcal{Z},$ $\theta _{xy}^{n,\delta }$ is tight with the topology
introduced in \eqref{eqdef:topology}.

\subsection{Distributional limits and the lower bound}

From the previous two subsections we have that $(\boldsymbol{\mu }^{n,\delta
},\tilde{\boldsymbol{\mu }}^{n,\delta },\boldsymbol{\theta }^{n,\delta
},T^{n,\delta }),$ is tight. For proving the lower bound, we can assume
without loss that the sequence has a distributional limit $(\boldsymbol{\mu }%
^{\delta },\tilde{\boldsymbol{\mu }}^{\delta },\boldsymbol{\theta }^{\delta
},T^{\delta }).$ By using the Skorohod representation theorem we can also
assume the sequence of variables is on the same probability space $(\Omega ,%
\mathcal{F},\mathbb{P})$, and that $(\boldsymbol{\mu }^{\delta },\tilde{%
\boldsymbol{\mu }}^{\delta },\boldsymbol{\theta }^{\delta },T^{\delta })$ is
an a.s. pointwise limit.

Consider any $\omega \in \Omega $ for which there is convergence. Since by
the definition of $\theta ^{n,\delta }$

\begin{equation*}
\theta _{xy}^{n,\delta }([0,\infty )\times A)=\int_{A\cap \lbrack
0,T^{n,\delta }]}\!\!\!\!\!\!\!\!\!\!\!\!\!\!\!\!\!\!\!\mu _{x}^{n,\delta
}(t)dt,\,\forall A\in \mathcal{B}(\mathbb{R}),
\end{equation*}%
for every continuity set $A$ of $\theta _{xy}^{\delta }([0,\infty )\times
\cdot )$ we have 
\begin{align*}
& \left\vert \theta _{xy}^{\delta }([0,\infty )\times A)-\int_{A\cap \lbrack
0,T^{\delta }\rbrack}\!\!\!\!\!\!\!\!\!\!\!\!\!\!\!\!\!\mu _{x}^{\delta
}(t)dt\right\vert \leq \left\vert \theta _{xy}^{\delta }([0,\infty )\times
A)-\theta _{xy}^{n,\delta }([0,\infty )\times A)\right\vert +\left\vert
\int_{A\cap \lbrack 0,T^{n,\delta }]}\!\!\!\!\!\!\!\!\!\!\!\!\!\!\!\!\!\mu
_{x}^{n,\delta }(t)dt-\int_{A\cap \lbrack 0,T^{\delta
}]}\!\!\!\!\!\!\!\!\!\!\!\!\!\!\!\!\!\mu _{x}^{\delta }(t)dt\right\vert \\
& \leq \left\vert \theta _{xy}^{\delta }([0,\infty )\times A)-\theta
_{xy}^{n,\delta }([0,\infty )\times A)\right\vert +\left\vert \int_{A\cap
\lbrack 0,T^{\delta }]}\!\!\!\!\!\!\!\!\!\!\!\!\!\!\!\!\!\mu _{x}^{n,\delta
}(t)dt-\int_{A\cap \lbrack 0,T^{\delta
}]}\!\!\!\!\!\!\!\!\!\!\!\!\!\!\!\!\!\mu _{x}^{\delta }(t)dt\right\vert
+\int_{A\cap \lbrack \min \{T^{n,\delta },T^{\delta }\},\max \{T^{\delta
},T^{n,\delta
}\}]}\!\!\!\!\!\!\!\!\!\!\!\!\!\!\!\!\!\!\!\!\!\!\!\!\!\!\!\!\!\!\!\!\!\!\!%
\!\!\!\!\!\!\!\!\!\!\!\!\!\!\!\!\!\!\!\!\!\!\!\!\!\!\!\!\!\!\!\!\!\mu
_{x}^{n,\delta }(t)dt \, \\
& \leq \left\vert \theta _{xy}^{\delta }([0,\infty )\times A)-\theta
_{xy}^{n,\delta }([0,\infty )\times A)\right\vert +d(\mu _{x}^{n,\delta
},\mu _{x}^{\delta })+|T^{\delta }-T^{n,\delta }|\rightarrow 0.
\end{align*}%
Therefore for every continuity set $A$ of $\theta _{xy}^{\delta }([0,\infty
)\times \cdot )$ 
\begin{equation*}
\theta _{xy}^{\delta }([0,\infty )\times A)=\int_{A\cap \lbrack 0,T^{\delta
}]}\mu _{x}^{\delta }(t)dt,
\end{equation*}%
from which we conclude that for all $(x,y)\in \mathcal{Z},$ $\theta
_{xy}^{\delta }$ has the decomposition $\theta _{xy}^{\delta }(drdt)=\eta
_{xy}^{\delta }(dr;t)dt,$ with $\eta _{xy}^{\delta }([0,\infty );t)=\mu
_{x}^{\delta }(t).$ Also, since $\int_{0}^{\infty }\int_{0}^{\infty }\ell
(r)\theta _{xy}^{n,\delta }(drdt)$ is uniformly bounded and $\ell $ is
superlinear, we have convergence of the first moments of the first marginal,
i.e., 
\begin{equation*}
\int_{\mathbb{R}}f(t)r\theta _{xy}^{n,\delta }(drdt)\rightarrow \int_{%
\mathbb{R}}f(t)r\theta _{xy}^{\delta }(dt),\hspace{10pt}\forall f\in C_{b}(%
\mathbb{R}).
\end{equation*}%
Hence for $\boldsymbol{q}^{\delta }$ defined by $\mu _{x}^{\delta
}(t)q_{xy}^{\delta }(t)=\int_{0}^{\infty }r\eta _{xy}^{\delta }(dr;t),$ we
get that for all $(x,y)\in \mathcal{Z}$ 
\begin{equation}
\int_{0}^{\infty }f(t)\mu _{x}^{n,\delta }(t)q_{xy}^{n,\delta
}(t)dt\rightarrow \int_{0}^{\infty }f(t)\mu _{x}^{\delta }(t)q_{xy}^{\delta
}(t)dt,\hspace{10pt}\forall f\in C_{b}(\mathbb{R}).  \label{12}
\end{equation}

Using the fact that $d(\boldsymbol{\mu }^{n,\delta },\tilde{\boldsymbol{\mu }%
}^{n,\delta })\rightarrow 0$ and (\ref{eqdef:upmu2}), we get 
\begin{equation}
\left\vert \boldsymbol{\mu }^{n,\delta }(t)-\boldsymbol{m}%
^{n}-\sum_{(x,y)\in \mathcal{Z}}\boldsymbol{v}_{xy}\int_{0}^{T^{n,\delta
}\wedge t}\mu _{x}^{n,\delta }(s)q_{xy}^{n,\delta }(s)ds\right\vert
=\left\vert \boldsymbol{\mu }^{n,\delta }(t)-\tilde{\boldsymbol{\mu }}%
^{n,\delta }(t)\right\vert \rightarrow 0,  \label{123}
\end{equation}%
for a.e. $t.$ Applying \eqref{12} for suitable choices of $f$ and using \eqref{123}, 
\begin{equation*}
\boldsymbol{\mu }^{\delta }(t)=\boldsymbol{m}+\sum_{(x,y)\in \mathcal{Z}}%
\boldsymbol{v}_{xy}\int_{0}^{T^{\delta }\wedge t}\mu _{x}^{\delta
}(s)q_{xy}^{\delta }(s)ds
\end{equation*}%
for a.e. $t$, and since the left side is cadlag and the right side is
continuous in the last display, equality holds for $t\geq 0$. We conclude
that $\boldsymbol{q}^{\delta }$ is the control that generates $\boldsymbol{%
\mu }^{\delta },$ and we also already noticed that $\mu _{x}^{\delta
}(t)q_{xy}^{\delta }(t)=\int_{0}^{\infty }r\eta _{xy}^{\delta }(dr;t).$
Finally, since $\boldsymbol{\mu }^{n,\delta }(T^{n,\delta })\in K_{\delta }$
and $d(\boldsymbol{\mu }^{n,\delta },\boldsymbol{\mu }^{\delta })\rightarrow
0,$ by continuity of $\boldsymbol{\mu }^{\delta }$ we get $\boldsymbol{\mu }%
^{\delta }(T^{\delta })\in K_{\delta }$. As discussed below (\ref%
{eqdef:nucost}), this concludes the lower bound proof.

\section{Upper bound}

Before we proceed with the proof of the upper bound%
\begin{equation*}
\limsup_{n\rightarrow \infty }V_{K}^{n}(\boldsymbol{m}^{n})\leq V_{K}(%
\boldsymbol{m}),
\end{equation*}%
we establish some preliminary lemmas. In the following lemmas, we make use
of $\mathcal{T}_{\boldsymbol{m}},$ $\mathcal{U}_{\boldsymbol{m}}$ and $F
_{xy},$ defined in (\ref{eqdef:calT}), (\ref{eqdef:calUm}), and (\ref{def:F}%
) respectively. For the properties of $F_{xy},$ see Lemma \ref%
{lemma:fnproperties}.

\begin{lemma}
\label{lemma:awayfromboundary} Let $\boldsymbol{m}\in \mathcal{P}_{\ast }(%
\mathcal{X}),$ and $\boldsymbol{q}\in \mathcal{U}_{\boldsymbol{m}}$ be such
that $(\boldsymbol{\mu },\boldsymbol{q})\in \mathcal{T}_{\boldsymbol{m}}.$
Given $T<\infty $ and $\epsilon >0,$ we can find $a_{1},a_{2},a_{3}\in
(0,\infty )$ and $\tilde{\boldsymbol{q}}\in \mathcal{U}_{m},$ with $(\tilde{%
\boldsymbol{\mu }},\tilde{\boldsymbol{q}})\in \mathcal{T}_{\boldsymbol{m}},$
such that 
\begin{equation}\label{bounded deterministic controls}
\begin{split}
&a_{1}\leq \!\!\inf_{(x,y)\in \mathcal{Z},t\in \lbrack 0,T]}\tilde{q}_{xy}(t)\leq\!\!
\sup_{(x,y)\in \mathcal{Z},t\in \lbrack 0,T]}\tilde{q}_{xy}(t)\leq a_{2},\hspace{8pt}
\inf_{x\in\mathcal{X}, t\in[0,T]}\tilde{\mu}_{x}(t)>a_{3},\hspace{4pt}\sup_{t%
\in[0,T]}\|\boldsymbol{\mu}(t)-\tilde{\boldsymbol{\mu}}(t)\|<\epsilon, \\
&\hspace{32pt} \text{and}\hspace{32pt} \sum_{(x,y)\in\mathcal{Z}}\int_{0}^{T}\tilde{\mu}_{x}(t)F_{xy}(\tilde{q}%
_{xy}(t))dt\leq \sum_{(x,y)\in\mathcal{Z}}\int_{0}^{T}%
\mu_{x}(t)F_{xy}(q_{xy}(t))dt.
\end{split}%
\end{equation}
\end{lemma}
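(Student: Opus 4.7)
My plan is to construct $\tilde{\boldsymbol{q}}$ in two stages: first push both the state $\boldsymbol{\mu}$ and the control $\boldsymbol{q}$ uniformly away from zero by convexly combining with the nominal flow, and then truncate the perturbed control from above. Throughout I may assume without loss of generality that the original cost $C_{\mathrm{orig}}:=\sum_{(x,y)\in\mathcal{Z}}\int_0^T \mu_x F_{xy}(q_{xy})\,dt$ is finite, since otherwise the cost inequality is vacuous and a suitable $\tilde{\boldsymbol{q}}$ with the required uniform bounds can be produced by applying the same construction to the baseline pair $(\boldsymbol{\nu},\boldsymbol{\gamma})$.

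For Stage 1, let $\boldsymbol{\nu}(t)$ be the solution of $\dot{\boldsymbol{\nu}}(t)=\boldsymbol{\nu}(t)\boldsymbol{\gamma}$ with $\boldsymbol{\nu}(0)=\boldsymbol{m}$. Since $\boldsymbol{m}\in\mathcal{P}_{\ast}(\mathcal{X})$ and the unperturbed semigroup preserves the open simplex, there is $a_\nu>0$ with $\nu_x(t)\geq a_\nu$ on $[0,T]$. For small $\alpha\in(0,1)$, define
\[
\hat{\boldsymbol{\mu}}(t):=(1-\alpha)\boldsymbol{\mu}(t)+\alpha\boldsymbol{\nu}(t),\qquad \hat{q}_{xy}(t):=\frac{(1-\alpha)\mu_x(t)q_{xy}(t)+\alpha\nu_x(t)\gamma_{xy}}{\hat{\mu}_x(t)}.
\]
A direct substitution shows $(\hat{\boldsymbol{\mu}},\hat{\boldsymbol{q}})\in\mathcal{T}_{\boldsymbol{m}}$; moreover $\hat{\mu}_x\geq\alpha a_\nu$, $\hat{q}_{xy}\geq\alpha a_\nu\gamma_{xy}$, and $\|\hat{\boldsymbol{\mu}}-\boldsymbol{\mu}\|_\infty\leq 2\alpha$. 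The crucial observation is that the perspective $(m,f)\mapsto mF_{xy}(f/m)$ is convex and $F_{xy}(\gamma_{xy})=0$ by Lemma \ref{lemma:fnproperties}, so
\[
\hat{\mu}_x F_{xy}(\hat{q}_{xy})\leq (1-\alpha)\mu_x F_{xy}(q_{xy})+\alpha\nu_x F_{xy}(\gamma_{xy})=(1-\alpha)\mu_x F_{xy}(q_{xy}),
\]
giving a strict $(1-\alpha)$-contraction of the cost. Combined with $\hat{\mu}_x\geq\alpha a_\nu$, this yields $\int_0^T F_{xy}(\hat{q}_{xy})\,dt\leq C_{\mathrm{orig}}/(\alpha a_\nu)$, and via the asymptotic bound $F_{xy}(q)\geq q-c_0$ (consequence of $F_{xy}(q)\geq\gamma_{xy}\ell(q/\gamma_{xy})$), also $\int_0^T \hat{q}_{xy}\,dt\leq C_1(\alpha)<\infty$ uniformly.

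For Stage 2, fix $M>\max_{(x,y)}\gamma_{xy}$, set $\tilde{q}_{xy}(t):=\min(\hat{q}_{xy}(t),M)$, and let $\tilde{\boldsymbol{\mu}}$ be the unique solution of $\tilde{\boldsymbol{\mu}}=\Lambda(\tilde{\boldsymbol{\mu}},\tilde{\boldsymbol{q}},\boldsymbol{m})$. Writing
\[
|\tilde{\boldsymbol{\mu}}-\hat{\boldsymbol{\mu}}|(t)\leq c\sum_{(x,y)}\int_0^t \tilde{q}_{xy}(s)|\tilde{\boldsymbol{\mu}}-\hat{\boldsymbol{\mu}}|(s)\,ds+c\sum_{(x,y)}\int_0^T \hat{\mu}_x(\hat{q}_{xy}-M)\mathbf{1}_{\hat{q}_{xy}>M}\,ds,
\]
Gronwall's inequality together with the uniform bound $\int_0^T\tilde{q}_{xy}\,ds\leq C_1(\alpha)$ gives control of the exponential factor in terms of $\alpha$ alone, while the residual term tends to $0$ as $M\to\infty$ because $\hat{q}_{xy}\mathbf{1}_{\hat{q}_{xy}>M}\leq F_{xy}(\hat{q}_{xy})/(\log(M/\gamma_{xy})-1)$ for $M$ sufficiently large. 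Hence $\|\tilde{\boldsymbol{\mu}}-\hat{\boldsymbol{\mu}}\|_\infty\to 0$ as $M\to\infty$ with $\alpha$ fixed. Choosing $M$ large we obtain the uniform bounds $a_3:=\alpha a_\nu/2\leq\tilde{\mu}_x$, $a_1:=\alpha a_\nu\gamma_{\min}\leq\tilde{q}_{xy}\leq M=:a_2$, and $\|\tilde{\boldsymbol{\mu}}-\boldsymbol{\mu}\|_\infty<\epsilon$ by first selecting $\alpha<\epsilon/4$.

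The principal obstacle is the cost comparison. I decompose
\[
\int\tilde{\mu}_x F_{xy}(\tilde{q}_{xy})\,dt-\int\hat{\mu}_x F_{xy}(\hat{q}_{xy})\,dt=\int\tilde{\mu}_x[F_{xy}(\tilde{q}_{xy})-F_{xy}(\hat{q}_{xy})]\,dt+\int(\tilde{\mu}_x-\hat{\mu}_x)F_{xy}(\hat{q}_{xy})\,dt.
\]
The first integral is nonpositive because $F_{xy}$ is convex with minimum at $\gamma_{xy}<M$, so truncation from above can only decrease $F_{xy}$. The second is bounded by $\|\tilde{\boldsymbol{\mu}}-\hat{\boldsymbol{\mu}}\|_\infty\cdot C_{\mathrm{orig}}/(\alpha a_\nu)$, and the delicate point is that to make this smaller than the $\alpha C_{\mathrm{orig}}$ slack earned in Stage 1 requires $\|\tilde{\boldsymbol{\mu}}-\hat{\boldsymbol{\mu}}\|_\infty<\alpha^2 a_\nu$, which in turn dictates taking $M$ very large as a function of $\alpha$. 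Fixing the order of quantifiers — choose $\alpha$ first depending on $\epsilon$, then $M=M(\alpha,\epsilon,C_{\mathrm{orig}})$ large enough to satisfy simultaneously the trajectory and cost estimates — closes the argument and yields the final cost bound $\int\tilde{\mu}_x F_{xy}(\tilde{q}_{xy})\,dt\leq (1-\alpha)C_{\mathrm{orig}}+\alpha C_{\mathrm{orig}}=C_{\mathrm{orig}}$.
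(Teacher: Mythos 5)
Your proof is correct and reaches the same destination, but by a genuinely different route in the second stage. Stage 1 (convex combination with the nominal flow $\boldsymbol{\nu}(\boldsymbol{m},\cdot)$ and the accompanying perspective-convexity estimate $\hat{\mu}_x F_{xy}(\hat{q}_{xy}) \leq (1-\alpha)\mu_x F_{xy}(q_{xy})$) is identical to the paper's, down to the choice of mixing weights. Stage 2 diverges: you truncate the \emph{rate} directly, $\tilde{q}_{xy}=\min(\hat{q}_{xy},M)$, then define $\tilde{\boldsymbol{\mu}}$ as the fixed point of $\Lambda$, control $\|\tilde{\boldsymbol{\mu}}-\hat{\boldsymbol{\mu}}\|$ via Gronwall (using $\int\hat{q}_{xy}\,dt<\infty$ to bound the exponential factor uniformly in $M$), and split the cost difference additively into a truncation term (nonpositive by monotonicity of $F_{xy}$ above $\gamma_{xy}$) plus a trajectory-error term (of order $\|\tilde{\boldsymbol{\mu}}-\hat{\boldsymbol{\mu}}\|_\infty$, absorbed into the $\alpha$-slack). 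The paper instead truncates the \emph{intensity field} $\mu_x^\delta\min(q_{xy}^\delta,M)$, defines $\boldsymbol{\mu}^{\delta,M}$ as the explicit time-integral of that field (so no Gronwall is needed and the trajectory error is estimated in one line, cf.\ the chain around \eqref{estimatesmore}), and then recovers $q_{xy}^{\delta,M}=\min(q_{xy}^\delta,M)\mu_x^\delta/\mu_x^{\delta,M}$. The paper's cost comparison proceeds through a more intricate chain of convexity inequalities \eqref{convex sum}--\eqref{combined estimate} that isolates a term $F_{xy}\bigl(\gamma_{xy}\,\delta\nu_x/(\mu_x^{\delta,M}-\mu_x^{\delta}+\delta\nu_x)\bigr)$ and controls it via continuity of $F_{xy}$ at $\gamma_{xy}$; your decomposition is more elementary and avoids that chain, at the modest price of a Gronwall step. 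Both arguments ultimately lean on the same two properties of $F_{xy}$: convexity and monotonicity above $\gamma_{xy}$.

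One small inaccuracy: you dispose of the case $C_{\mathrm{orig}}=\infty$ by proposing to ``apply the same construction to the baseline pair $(\boldsymbol{\nu},\boldsymbol{\gamma})$,'' but that would produce $\tilde{\boldsymbol{\mu}}$ near $\boldsymbol{\nu}$, not near $\boldsymbol{\mu}$, and so would fail the requirement $\sup_t\|\boldsymbol{\mu}(t)-\tilde{\boldsymbol{\mu}}(t)\|<\epsilon$. In fact no separate treatment is needed: since $\boldsymbol{q}\in\mathcal{U}_{\boldsymbol{m}}$ guarantees $\int_0^T\mu_x q_{xy}\,dt<\infty$, Stage 1 still yields $\int_0^T\hat{\mu}_x\hat{q}_{xy}\,dt<\infty$, hence $\int_0^T\hat{q}_{xy}\,dt<\infty$, and the dominated-convergence bound on the residual $\int\hat{\mu}_x\hat{q}_{xy}\mathbf{1}_{\hat{q}_{xy}>M}\,dt\to 0$ carries through without the $F_{xy}$-based estimate; the cost inequality is then vacuously satisfied. (The paper's corresponding WLOG reduction, ``if the right-hand side is zero then $\boldsymbol{q}=\boldsymbol{\gamma}$,'' handles the opposite degenerate case $C_{\mathrm{orig}}=0$, which your $\alpha C_{\mathrm{orig}}$-slack argument also implicitly requires to be nonzero — worth a sentence to close.)
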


\begin{proof}
Recall that $\boldsymbol{m}\in \mathcal{P}_{\ast }(\mathcal{X})$ implies $%
\boldsymbol{m}_{x}>0$ for all $x\in \mathcal{X}$. Let $\boldsymbol{\nu }(%
\boldsymbol{m},t)$ be the solution to the equation $\boldsymbol{\dot{\nu}}%
(t)=\boldsymbol{\gamma }\boldsymbol{\nu }(t),$ with initial data $\boldsymbol{m}.$ By
Remark \ref{away}, we know that there exists $1\geq a>0$ such that $%
\boldsymbol{\nu }(\boldsymbol{m},t)\in \mathcal{P}_{a}(\mathcal{X}),$ for
every $t\in \lbrack 0,T]$. We can assume without loss that the right hand
side on the second line of (\ref{bounded deterministic controls}) is greater than zero, since if not true
then the controlled rates are $\boldsymbol{\gamma }$ and the conclusion of
the lemma is automatic. For $\frac{\epsilon }{2}\geq \delta >0,$ let 
\begin{equation}
\boldsymbol{\mu }^{\delta }(\cdot )\doteq \displaystyle\delta \boldsymbol{\nu} (%
\boldsymbol{m},\cdot )+(1-\delta )\boldsymbol{\mu }(\cdot ),  \label{mudelta}
\end{equation}%
and note that $\boldsymbol{\mu }_{x}^{\delta }(t)>0$ for every $t\in \lbrack
0,T]$ and $x\in \mathcal{X}$. Therefore, for $\delta $ as above and $%
(x,y)\in {\mathcal{Z}}$, we can define 
\begin{equation}
q_{xy}^{\delta }(\cdot )=\displaystyle\gamma _{xy}\frac{\delta \nu _{x}(%
\boldsymbol{m},\cdot )}{\mu _{x}^{\delta }(\cdot )}+q_{xy}(\cdot )\frac{%
(1-\delta )\mu _{x}(\cdot )}{\mu _{x}^{\delta }(\cdot )}.  \label{qdelta}
\end{equation}%
Then it is straightforward to check that $(\boldsymbol{\mu }^{\delta },%
\boldsymbol{q}^{\delta })\in \mathcal{T}_{\boldsymbol{m}}.$ Moreover, since $%
\frac{\delta \nu _{x}(\boldsymbol{m},t)}{\mu _{x}^{\delta }(t)}+\frac{%
(1-\delta )\mu _{x}(t)}{\mu _{x}^{\delta }(t)}=1$ for all $t\in \lbrack 0,T]$%
, by the convexity of $F$ we obtain 
\begin{equation*}
\begin{split}
& \sum_{(x,y)\in \mathcal{Z}}\int_{0}^{T}\mu _{x}^{\delta }(t)F_{xy}\left(
q_{xy}^{\delta }(t)\right) dt=\sum_{(x,y)\in \mathcal{Z}}\int_{0}^{T}\mu
_{x}^{\delta }(t)F_{xy}\left( \gamma _{xy}\frac{\delta \nu _{x}(\boldsymbol{m%
},t)}{\mu _{x}^{\delta }(t)}+q_{xy}\frac{(1-\delta )\mu _{x}(t)}{\mu
_{x}^{\delta }(t)}\right) dt \leq\\
&   \!\!\!\!\sum_{(x,y)\in \mathcal{Z}}\!\int_{0}^{T}\!\!\!\mu _{x}^{\delta }(t)%
\frac{\delta \nu _{x}(\boldsymbol{m},t)}{\mu _{x}^{\delta }(t)}\!F_{xy}\left(
\gamma _{xy}\right)\! dt{+}\!\!\!\sum_{(x,y)\in \mathcal{Z}}\int_{0}^{T}\!\!\!\!\mu
_{x}^{\delta }(t)\frac{(1{-}\delta )\mu _{x}(t)}{\mu _{x}^{\delta }(t)}%
F_{xy}\left(\! q_{xy}(t)\!\right) \!dt \leq \!(1{-}\delta )\!\!\!\!\!\sum_{(x,y)\in \mathcal{Z}}\int_{0}^{T}\!\!\!\!\mu
_{x}(t)F_{xy}\left( q_{xy}(t)\right) \!dt,
\end{split}%
\end{equation*}%
where in the second inequality, we used the fact that $F^{\infty }(\gamma
_{xy})=0$ [see Lemma \ref{lemma:fnproperties}]. Therefore, we get a couple $%
(\boldsymbol{\mu }^{\delta },\boldsymbol{q}^{\delta })\in \mathcal{T}_{%
\boldsymbol{m}}$ with cost strictly less than the initial one, and with $%
\boldsymbol{\mu }^{\delta }$ that satisfies 
\begin{equation}
\mu _{x}^{\delta }(t)\geq \delta a\hspace{16pt}\text{and}\hspace{16pt}\frac{%
(1-\delta )\mu _{x}(t)}{\mu _{x}^{\delta }(t)}\leq \frac{(1-\delta )}{\delta
a+(1-\delta )}\equiv c<1,  \label{estimateofmudelta}
\end{equation}%
for all $t\in \lbrack 0,T].$ However, since this couple does not
necessarily satisfy condition (\ref{bounded deterministic controls}), we
modify it even further. Specifically, we pick $M\in (2\gamma _{\max },\infty
)$ big enough such that%
\begin{equation}
\sum_{(x,y)\in \mathcal{Z}}\int_{0}^{T}\mu _{x}^{\delta }(t)\left\vert \min
\left\{ q_{xy}^{\delta }(t),M\right\} -q_{xy}^{\delta }(t)\right\vert dt\leq 
\frac{a\delta (1-\sqrt{c})}{\sqrt{2}},
\label{estimateformdelta}
\end{equation}%
and define 
\begin{equation}
\boldsymbol{\mu }^{\delta ,M}(t)=\int_{0}^{t}\sum_{(x,y)\in \mathcal{Z}}\mu
_{x}^{\delta }(t)\min \left\{ q_{xy}^{\delta }(t),M\right\} \boldsymbol{v}%
_{xy}dt.  \label{eqdef:mudeltam}
\end{equation}%
Then 
\begin{equation}
\begin{split}
\left\vert \mu _{x}^{\delta ,M}(t)-\mu _{x}^{\delta }(t)\right\vert & \leq
\left\Vert \boldsymbol{\mu }^{\delta ,M}(t)-\boldsymbol{\mu }^{\delta
}(t)\right\Vert \overset{(\ref{eqdef:mudeltam})}{=}\left\Vert \sum_{(x,y)\in 
\mathcal{Z}}\int_{0}^{T}\left( \mu _{x}^{\delta }(t)\left( q_{xy}^{\delta
}(t)-\min \left\{ q_{xy}^{\delta }(t),M\right\} \right) \right) \boldsymbol{v%
}_{xy}dt\right\Vert \\
& \leq \sum_{(x,y)\in \mathcal{Z}}\int_{0}^{T}\left\vert \mu _{x}^{\delta
}(t)\left( q_{xy}^{\delta }(t)-\min \left\{ q_{xy}^{\delta }(t),M\right\}
\right) \right\vert \Vert \boldsymbol{v}_{xy}\Vert dt \\
& \leq \sqrt{2}\sum_{(x,y)\in \mathcal{Z}}\int_{0}^{T}\left( \mu
_{x}^{\delta ,M}(t)\left( q_{xy}^{\delta }(t)-\min \left\{ q_{xy}^{\delta
}(t),M\right\} \right) \right) dt\overset{(\ref{estimateformdelta})}{\leq }%
a\delta (1-\sqrt{c}),
\end{split}
\label{estimatesmore}
\end{equation}%
and therefore for $t\in \lbrack 0,T],$ 
\begin{equation}
\mu _{x}^{\delta ,M}(t)\geq \mu _{x}^{\delta }(t)-\left\vert \mu
_{x}^{\delta ,M}(t)-\mu _{x}^{\delta }(t)\right\vert \overset{(\ref%
{estimateofmudelta})}{\geq }a\delta -\left\vert \mu _{x}^{\delta ,M}(t)-\mu
_{x}^{\delta }(t)\right\vert \overset{(\ref{estimatesmore})}{\geq }a\delta 
\sqrt{c}.  \label{estimateofmudeltam}
\end{equation}%
We also get 
\begin{equation*}
\left\vert 1-\frac{\mu _{x}^{\delta ,M}(t)}{\mu _{x}^{\delta }(t)}%
\right\vert \overset{(\ref{estimatesmore})}{\leq }\frac{a\delta (1-\sqrt{c})%
}{\min_{x}\mu _{x}^{\delta }(t)}\overset{(\ref{estimateofmudelta})}{\leq }(1-%
\sqrt{c})
\end{equation*}%
or 
\begin{equation}
\frac{\mu _{x}^{\delta }(t)}{\mu _{x}^{\delta ,M}(t)}\geq \frac{1}{2-\sqrt{c}%
}\hspace{24pt}\text{and}\hspace{24pt}\frac{\mu _{x}^{\delta }(t)}{\mu
_{x}^{\delta ,M}(t)}\leq \frac{1}{\sqrt{c}}=\frac{\sqrt{c}}{c}.
\label{estimateforratioofmu}
\end{equation}%
We deduce that $\boldsymbol{\mu }^{\delta ,M}(t)\in \mathcal{P}_{\ast }(%
\mathcal{X}),$ for all $t\in \lbrack 0,T],$ and therefore can define 
\begin{equation}
q_{xy}^{\delta ,M}(t)=\frac{\min \left\{ q_{xy}^{\delta }(t),M\right\} \mu
_{x}^{\delta }(t)}{\mu _{x}^{\delta ,M}(t)},  \label{qdeltaM}
\end{equation}%
which will give $(\boldsymbol{\mu }^{\delta ,M},\boldsymbol{q}^{\delta
,M})\in \mathcal{T}_{\boldsymbol{m}}.$ {We can see that 
\eqref{bounded
deterministic controls} is satisfied, since by \eqref{qdelta} and the first inequality in %
\eqref{estimateforratioofmu} for the bound from below and the second inequality in %
\eqref{estimateforratioofmu} for the bound from above we have 
\begin{equation*}
\frac{\gamma _{xy}\delta \nu _{x}(\boldsymbol{m},\cdot )}{2}\leq
q_{xy}^{\delta ,M}(t)\leq M\frac{\sqrt{c}}{c}
\end{equation*}%
} It is worth mentioning at this point that trying to get an estimate for
the cost of $(\boldsymbol{\mu }^{\delta ,M},\boldsymbol{q}^{\delta ,M}),$
with respect to the cost of $(\boldsymbol{\mu }^{\delta },\boldsymbol{q}%
^{\delta }),$ would require some extra properties of $F$. However, we can
obtain an estimate of the cost $(\boldsymbol{\mu }^{\delta ,M},\boldsymbol{q}%
^{\delta ,M})$ with respect to the cost of the initial triplet $(\boldsymbol{%
\mu },\boldsymbol{q})$, by utilizing only the convexity of $F_{xy},$ and
choosing the right parameters. Using the fact that $F_{xy}$ is increasing on 
$[\gamma _{xy},\infty )$ in the first inequality, and that $M\mu
_{x}^{\delta }(t)/\mu _{x}^{\delta ,M}(t)\geq \gamma _{xy}$ by (\ref%
{estimateforratioofmu}) and $M\geq 2\gamma _{xy}$, 
\begin{equation}
\begin{split}
&F_{xy}\left( q_{xy}^{\delta ,M}(t)\right) \overset{(\ref{qdeltaM})}{=}
F_{xy}\left( \frac{\min \left\{ q_{xy}^{\delta }(t),M\right\} \mu
_{x}^{\delta }(t)}{\mu _{x}^{\delta ,M}(t)}\right) \leq F_{xy}\left( \frac{%
q_{xy}^{\delta }(t)\mu _{x}^{\delta }(t)}{\mu _{x}^{\delta ,M}(t)}\right)
\overset{(\ref{qdelta})}{=} \\& F_{xy}\left( \frac{\mu _{x}^{\delta }(t)}{\mu
_{x}^{\delta ,M}(t)}\left( \gamma _{xy}\frac{\delta \nu _{x}(\boldsymbol{m}%
,t)}{\mu _{x}^{\delta }(t)}+q_{xy}(t)\frac{(1-\delta )\mu _{x}(t)}{\mu
_{x}^{\delta }(t)}\right)\!\! \right)= F_{xy}\left( \gamma _{xy}\frac{\delta \nu _{x}(\boldsymbol{m}%
,t)}{\mu _{x}^{\delta ,M}(t)}+q_{xy}(t)\frac{(1-\delta )\mu _{x}(t)}{\mu
_{x}^{\delta ,M}(t)}\!\right) .
\end{split}
\label{convex sum}
\end{equation}%
However, from (\ref{estimateofmudelta}) and (\ref{estimateforratioofmu}), we
have 
\begin{equation*}
\frac{(1-\delta )\mu _{x}(t)}{\mu _{x}^{\delta ,M}(t)}=\frac{(1-\delta )\mu
_{x}(t)}{\mu _{x}^{\delta }(t)}\frac{\mu _{x}^{\delta }(t)}{\mu _{x}^{\delta
,M}(t)}\leq c\frac{\sqrt{c}}{c}=\sqrt{c}<1.
\end{equation*}%
Therefore using the convexity of $F$ we have 
\begin{equation}
\begin{split}
& \!F_{xy}\!\left(\!\! \gamma _{xy}\frac{\delta \nu _{x}(\boldsymbol{m},t)}{\mu
_{x}^{\delta ,M}(t)}+q_{xy}(t)\frac{(1-\delta )\mu _{x}(t)}{\mu _{x}^{\delta
,M}(t)}\!\!\right)\! =\!\!F_{xy}\!\left( \!\!\frac{\left( 1-\frac{(1-\delta )\mu
_{x}(t)}{\mu _{x}^{\delta ,M}(t)}\right) }{\left( 1-\frac{(1-\delta )\mu
_{x}(t)}{\mu _{x}^{\delta ,M}(t)}\right) }\gamma _{xy}\frac{\delta \nu _{x}(%
\boldsymbol{m},t)}{\mu _{x}^{\delta ,M}(t)}+q_{xy}(t)\frac{(1-\delta )\mu
_{x}(t)}{\mu _{x}^{\delta ,M}(t)}\!\!\right) \\
& \quad \leq \left( 1-\frac{(1-\delta )\mu _{x}(t)}{\mu _{x}^{\delta ,M}(t)}%
\right) F_{xy}\left( \gamma _{xy}\frac{\delta \nu _{x}(\boldsymbol{m},t)}{%
\mu _{x}^{\delta ,M}(t)-(1-\delta )\mu _{x}(t)}\right) +\frac{(1-\delta )\mu
_{x}(t)}{\mu _{x}^{\delta ,M}(t)}F_{xy}\left( q_{xy}(t)\right) .
\end{split}
\label{combined estimate}
\end{equation}

Combining (\ref{convex sum}) and (\ref{combined estimate}) and then using %
\eqref{mudelta}, we obtain 
\begin{equation}
\begin{split}
& \mu _{x}^{\delta ,M}(t)F_{xy}\left( q_{xy}^{\delta ,M}(t)\right) \\
&  \leq \left( \mu _{x}^{\delta ,M}(t)-(1-\delta )\mu _{x}(t)\right)
F_{xy}\left( \gamma _{xy}\frac{\delta \nu _{x}(\boldsymbol{m},t)}{\mu
_{x}^{\delta ,M}(t)-(1-\delta )\mu _{x}(t)}\right) +(1-\delta )\mu
_{x}(t)F_{xy}\left( q_{xy}(t)\right) \\
&  =\left(\! \mu _{x}^{\delta ,M}(t)-\mu _{x}^{\delta }(t)+\delta \nu
_{x}(M,t)\!\right)\! F_{xy}\left(\!\! \gamma _{xy}\frac{\delta \nu _{x}(\boldsymbol{m%
},t)}{\mu _{x}^{\delta ,M}(t)-\mu _{x}^{\delta }(t)+\delta \nu _{x}(%
\boldsymbol{m},t)}\right) +(1-\delta )\mu _{x}(t)F_{xy}\left(
q_{xy}(t)\right) .
\end{split}
\label{I dont know}
\end{equation}%
We can make $|\mu _{x}^{\delta ,M}(t)-\mu _{x}^{\delta }(t)|$ uniformly as
close to zero as desired and therefore we can make the quantity $\gamma _{xy}%
\frac{\delta \nu _{x}(\boldsymbol{m},t)}{\mu _{x}^{\delta ,M}(t)-\mu
_{x}^{\delta }(t)+\delta \nu _{x}(\boldsymbol{m},t)}$ as close to $\gamma
_{xy}$ as desired by picking $M$ sufficiently large. Since $F_{xy}\left(
\gamma _{xy}\right) =0$ and $F_{xy}\left( \cdot \right) $ is continuous on $%
(0,\infty )$ by Lemma \ref{lemma:fnproperties2}, we can pick $M<\infty $
such that for every $t\in \lbrack 0,T],$

\begin{equation}
F_{xy}\left( \gamma _{xy}\frac{\delta \nu _{x}(\boldsymbol{m},t)}{\mu
_{x}^{\delta ,M}(t)-\mu _{x}^{\delta }(t)+\delta \nu _{x}(\boldsymbol{m},t)}%
\right) \leq \frac{1}{2T}\int_{0}^{T}\mu _{x}(s)F_{xy}(q_{xy}(s))ds.
\label{estimate around gamma}
\end{equation}

Then from (\ref{I dont know}) and (\ref{estimate around gamma}) and the fact
that $\nu _{x}(m,t)\leq 1$ and (\ref{estimatesmore}), for $t\in \lbrack 0,T]$
\begin{equation*}
\begin{split}
& \sum_{(x,y)\in \mathcal{Z}}\int_{0}^{T}\mu _{x}^{\delta ,M}(t)F_{xy}\left(
q_{xy}^{\delta ,M}(t)\right) dt \leq \sum_{(x,y)\in \mathcal{Z}}
\int_{0}^{T}\left( 2\delta \right) \left( \frac{1}{2T}\int_{0}^{T}\mu
_{x}(s)F_{xy}(q_{xy}(s))ds\right) dt  \\
& +\int_{0}^{T}(1-\delta )\mu _{x}(t)F_{xy}\left( q_{xy}(t)\right) dt
=\sum_{(x,y)\in \mathcal{Z}}\int_{0}^{T}\mu_{x}(t)F_{xy}\left(
q_{xy}(t)\right) dt.
\end{split}%
\end{equation*}
\end{proof}

Next, we are going to prove the following result.

\begin{lemma}[Law of large numbers]
\label{lolna}Let $T\in (0,\infty )$ be given. There exists a constant $%
c<\infty $ such that if $(\boldsymbol{\mu }^{n},\boldsymbol{\gamma })\in 
\mathcal{T}_{\boldsymbol{m}}^{n}$ (see (\ref{eqdef:calTn})), and $(%
\boldsymbol{\nu },\boldsymbol{\gamma })\in \mathcal{T}_{\boldsymbol{m}},$
then 
\begin{equation}
\mathbb{P}\left( \sup_{t\in \lbrack 0,T]}\Vert \boldsymbol{\mu }^{n}(t)-%
\boldsymbol{\nu }(\boldsymbol{m},t)\Vert \geq \epsilon \right) \leq \frac{c}{%
\epsilon \sqrt{n}}.  \label{loln}
\end{equation}
\end{lemma}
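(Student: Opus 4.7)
The plan is a standard martingale-plus-Gronwall argument. Starting from \eqref{eqdef:individual_dynamics} summed into the empirical measure (equivalently, from $\boldsymbol{\mu}^{n} = h^{n}(\boldsymbol{\mu}^{n},\boldsymbol{\gamma},\boldsymbol{m},\boldsymbol{N}^{n}/n)$), I would write
\begin{equation*}
\boldsymbol{\mu}^{n}(t) = \boldsymbol{m} + \int_{0}^{t} \sum_{(x,y)\in \mathcal{Z}} \boldsymbol{v}_{xy}\, \mu^{n}_{x}(s)\gamma_{xy}\, ds + \boldsymbol{M}^{n}(t),
\end{equation*}
where $\boldsymbol{M}^{n}(t) := \sum_{(x,y)\in \mathcal{Z}} \boldsymbol{v}_{xy}\, Q^{n}_{xy}(t)$ and
\begin{equation*}
Q^{n}_{xy}(t) := \int_{(0,t]}\!\!\int_{[0,\infty)} 1_{[0,\mu^{n}_{x}(s-)\gamma_{xy}]}(r)\, \frac{1}{n} N^{n}_{xy}(ds\,dr) - \int_{0}^{t} \mu^{n}_{x}(s)\gamma_{xy}\, ds
\end{equation*}
is a zero-mean $\{\mathcal{F}_{t}\}$-martingale (the compensated PRM integral). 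Subtracting the analogous (non-random) identity for $\boldsymbol{\nu}(\boldsymbol{m},\cdot)$, which has no martingale part, gives
\begin{equation*}
\boldsymbol{\mu}^{n}(t) - \boldsymbol{\nu}(\boldsymbol{m},t) = \int_{0}^{t} \sum_{(x,y)\in \mathcal{Z}} \boldsymbol{v}_{xy}\,\gamma_{xy}\bigl(\mu^{n}_{x}(s) - \nu_{x}(\boldsymbol{m},s)\bigr)\, ds + \boldsymbol{M}^{n}(t).
\end{equation*}

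Taking Euclidean norms and using that $\|\boldsymbol{v}_{xy}\|\le \sqrt{2}$, there is a constant $L$ depending only on $d$ and $\boldsymbol{\gamma}$ such that
\begin{equation*}
\|\boldsymbol{\mu}^{n}(t) - \boldsymbol{\nu}(\boldsymbol{m},t)\| \le L \int_{0}^{t} \|\boldsymbol{\mu}^{n}(s) - \boldsymbol{\nu}(\boldsymbol{m},s)\|\, ds + \sup_{s\in[0,T]}\|\boldsymbol{M}^{n}(s)\|,
\end{equation*}
for every $t\in[0,T]$. An application of Gronwall's inequality then yields
\begin{equation*}
\sup_{t\in[0,T]}\|\boldsymbol{\mu}^{n}(t) - \boldsymbol{\nu}(\boldsymbol{m},t)\| \le e^{LT}\, \sup_{t\in[0,T]} \|\boldsymbol{M}^{n}(t)\|.
\end{equation*}

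It remains to control the martingale term. Because $\mu^{n}_{x}(s)\le 1$ and $\gamma_{xy}\le \gamma_{\max}$, the predictable quadratic variation of each component $Q^{n}_{xy}$ satisfies
\begin{equation*}
\langle Q^{n}_{xy}\rangle_{T} = \frac{1}{n^{2}}\int_{0}^{T}\mu^{n}_{x}(s)\gamma_{xy}\cdot n\, ds \le \frac{\gamma_{\max}T}{n}.
\end{equation*}
By the Burkholder--Davis--Gundy inequality (exactly as applied in \eqref{BGD}) there is a constant $c_{BGD}$ such that
\begin{equation*}
\mathbb{E}\Bigl[\sup_{t\in[0,T]} |Q^{n}_{xy}(t)|\Bigr] \le c_{BGD}\sqrt{\gamma_{\max}T/n}.
\end{equation*}
Summing over $(x,y)\in \mathcal{Z}$ and combining with the Gronwall estimate gives
\begin{equation*}
\mathbb{E}\Bigl[\sup_{t\in[0,T]}\|\boldsymbol{\mu}^{n}(t) - \boldsymbol{\nu}(\boldsymbol{m},t)\|\Bigr] \le \frac{\tilde{c}}{\sqrt{n}},
\end{equation*}
for some $\tilde{c}$ depending only on $T$, $d$, and $\boldsymbol{\gamma}$. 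The conclusion \eqref{loln} then follows from Markov's inequality with $c=\tilde c$.

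The only genuine step is verifying the BDG bound for the compensated stochastic integrals against the scaled PRMs $N^{n}_{xy}/n$; this is standard and essentially identical to the computation already carried out after \eqref{eqdef:upmu2}, so I do not anticipate an obstacle. The $1/\sqrt{n}$ rate is sharp since $\langle Q^{n}_{xy}\rangle$ scales as $1/n$.
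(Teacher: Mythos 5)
Your proposal follows the same route as the paper's proof: write $\boldsymbol{\mu}^n$ in semimartingale form (drift plus compensated-PRM martingale), subtract the deterministic ODE for $\boldsymbol{\nu}(\boldsymbol{m},\cdot)$, apply Gronwall to absorb the Lipschitz drift term and reduce to controlling $\sup_t\|\boldsymbol{M}^n(t)\|$, and then use Burkholder--Davis--Gundy with the quadratic variation $O(1/n)$ to get the $1/\sqrt{n}$ rate. The only cosmetic difference is ordering: you bound $\mathbb{E}\bigl[\sup_t\|\boldsymbol{\mu}^n(t)-\boldsymbol{\nu}(\boldsymbol{m},t)\|\bigr]$ and then apply Markov once at the end, whereas the paper applies BDG/Markov to each component $Q^n_{xy}$ and then takes a union bound over $(x,y)\in\mathcal{Z}$; both give the claimed bound with the same dependence on $T$, $d$, $\boldsymbol{\gamma}$.
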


\begin{proof}
We have 
\begin{align*}
\left\Vert \boldsymbol{\mu }^{n}(t)-\boldsymbol{\nu }(\boldsymbol{m}%
,t)\right\Vert & \leq \sum_{(x,y)}\left\vert \int_{0}^{t}\int_{0}^{\infty
}1_{[0,\mu _{x}^{n}(s)\gamma _{xy}]}(r)\frac{1}{n}N_{xy}^{n}(dsdr)-%
\int_{0}^{t}\int_{0}^{\infty }1_{[0,\nu _{x}(m,s)\gamma
_{xy}]}(r)dsdr\right\vert \\
& \leq \sum_{(x,y)}\left\vert \int_{0}^{t}\int_{0}^{\infty }1_{[0,\mu
_{x}^{n}(s)\gamma _{xy}]}(r)\frac{1}{n}N_{xy}^{n}(dsdr)-\int_{0}^{t}%
\int_{0}^{\infty }1_{[0,\mu _{x}^{n}(s)\gamma _{xy}]}(r)dsdr\right\vert \\
& \quad +\sum_{(x,y)}\left\vert \int_{0}^{t}\int_{0}^{\infty }1_{[0,\mu
_{x}^{n}(s)\gamma _{xy}]}(r)dsdr-\int_{0}^{t}\int_{0}^{\infty }1_{[0,\nu
_{x}(m,s)\gamma _{xy}]}(r)ds\right\vert .
\end{align*}%
For a constant $K$ that depends on $d$ and the maximum of $\gamma _{xy},$%
\begin{equation*}
\sum_{(x,y)}\left\vert \int_{0}^{t}\int_{0}^{\infty }1_{[0,\mu
_{x}^{n}(s)\gamma _{xy}]}(r)dsdr-\int_{0}^{t}\int_{0}^{\infty }1_{[0,\nu
_{x}(m,s)\gamma _{xy}]}(r)ds\right\vert \leq K\sup_{0\leq s\leq t}\left\Vert 
\boldsymbol{\mu }^{n}(s)-\boldsymbol{\nu }(\boldsymbol{m},s)\right\Vert .
\end{equation*}%
Hence by Gronwall's inequality, for $r\in \lbrack 0,T]$%
\begin{equation*}
\left\Vert \boldsymbol{\mu }^{n}(r)-\boldsymbol{\nu }(\boldsymbol{m}%
,r)\right\Vert \leq e^{KT}\sup_{0\leq t\leq r}\sum_{(x,y)}\left\vert
\int_{0}^{t}\int_{0}^{\infty }1_{[0,\mu _{x}^{n}(s)\gamma _{xy}]}(r)\frac{1}{%
n}N_{xy}^{n}(dsdr)-\int_{0}^{t}\int_{0}^{\infty }1_{[0,\mu _{x}^{n}(s)\gamma
_{xy}]}(r)dsdr\right\vert .
\end{equation*}%
Using the Burkholder-Gundy-Davis inequality as was done to obtain (\ref{BGD}%
), 
\begin{equation*}
\mathbb{P}\left( \sup_{t\in \lbrack 0,T]}\left\vert \int_{0}^{t}\int_{0}^{\infty
}1_{[0,\mu _{x}^{n}(s)\gamma _{xy}]}(r)\frac{1}{n}N_{xy}^{n}(dsdr)-%
\int_{0}^{t}\int_{0}^{\infty }1_{[0,\mu _{x}^{n}(s)\gamma
_{xy}]}(r)dsdr\right\vert \geq \epsilon \right) \leq \frac{\bar{c}}{\epsilon 
\sqrt{n}},
\end{equation*}%
and hence 
\begin{equation*}
\mathbb{P}\left( \sup_{t\in \lbrack 0,T]}\left\Vert \boldsymbol{\mu }^{n}(t)-%
\boldsymbol{\nu }(\boldsymbol{m},t)\right\Vert \geq \epsilon \right) \leq
d^{2}\frac{e^{KT}\bar{c}}{\epsilon \sqrt{n}},
\end{equation*}%
which is (\ref{loln}).
\end{proof}

We now obtain the following result.

\begin{lemma}
\label{Vuniformlybounded}The sequence $V^{n}(\boldsymbol{m})$ is bounded,
uniformly in $n$ and $\boldsymbol{m}\in \mathcal{P}(X).$
\end{lemma}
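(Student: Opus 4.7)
The strategy is to exhibit, for each $n$ and each starting state $\boldsymbol{m}\in\mathcal{P}^{n}(\mathcal{X})$, an admissible control whose expected cost is bounded by a constant independent of both. Since $K=\overline{K^{\circ}}\neq\emptyset$ (Assumption \ref{assumption:o}) and $\mathcal{P}_{\ast}(\mathcal{X})$ is dense in $\mathcal{P}(\mathcal{X})$, fix a reference point $\boldsymbol{m}^{\ast}\in K^{\circ}\cap\mathcal{P}_{\ast}(\mathcal{X})$ together with $\rho>0$ such that $\overline{B(\boldsymbol{m}^{\ast},\rho)}\cap\mathcal{P}(\mathcal{X})\subset K$. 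By Theorem \ref{lemma:uniFUont} the map $\bar{\boldsymbol{m}}\mapsto V_{\{\boldsymbol{m}^{\ast}\}}(\bar{\boldsymbol{m}})$ is continuous on the compact set $\mathcal{P}(\mathcal{X})$ and hence has a finite supremum $C_{0}$; since $T_{K}\leq T_{\{\boldsymbol{m}^{\ast}\}}$, the deterministic value $V_{K}$ is bounded by $C_{0}$ as well. This deterministic cost is what the stochastic strategy will imitate.

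The plan is to use a two-phase admissible control. Fix $t_{0}>0$ and set $a_{1}:=b_{1}t_{0}^{D}$ with $b_{1},D$ from Remark \ref{away}. On $[0,t_{0}]$ apply the nominal rate $\boldsymbol{\gamma}$, for which $F_{xy}(\gamma_{xy})=0$ (so only the $R$-term contributes, and is bounded by $R_{\max}t_{0}$). On $[t_{0},t_{0}+D_{0}]$ with $D_{0}:=\mathrm{diam}\,\mathcal{P}(\mathcal{X})$, and on the event $\{\boldsymbol{\mu}^{n}(t_{0})\in\mathcal{P}_{a_{1}/2}(\mathcal{X})\}$, apply the bounded straight-line deterministic control from Theorem \ref{lemma:uniFUont} that drives $\boldsymbol{\mu}^{n}(t_{0})$ to $\boldsymbol{m}^{\ast}$, whose rates are bounded uniformly by $2\bar{c}/a_{1}$. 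By Remark \ref{away} the deterministic Phase~I trajectory $\boldsymbol{\nu}(\boldsymbol{m}^{n},t_{0})\in\mathcal{P}_{a_{1}}(\mathcal{X})$, and by Lemma \ref{lolna} the empirical measure $\boldsymbol{\mu}^{n}(t_{0})$ lies within $a_{1}/2$ of $\boldsymbol{\nu}(\boldsymbol{m}^{n},t_{0})$ with probability at least $1-c_{1}/\sqrt{n}$. An analogue of Lemma \ref{lolna} for the bounded Phase~II control ensures, on a further event of probability at least $1-c_{2}/\sqrt{n}$, that $\boldsymbol{\mu}^{n}$ stays within $\rho/2$ of the straight-line path throughout Phase~II and hence enters $K$ by time $t_{0}+D_{0}$. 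On this good event the integrand of $J_{K}^{n}$ is bounded by $M:=R_{\max}+|\mathcal{Z}|\max_{(x,y)}F_{xy}(2\bar{c}/a_{1})<\infty$, using continuity of $F_{xy}$ on $(0,\infty)$ from Lemma \ref{lemma:fnproperties2}, so the per-attempt cost is at most $M(t_{0}+D_{0})$. Iterating upon failure via the strong Markov property then yields expected total cost at most $M(t_{0}+D_{0})/p_{0}$ for some $p_{0}>0$ uniformly in $n\geq n_{0}$. For each of the finitely many $n<n_{0}$, the state space $\mathcal{P}^{n}(\mathcal{X})$ is finite and $V_{K}^{n}$ is finite at each point (e.g.\ by using $\boldsymbol{q}=\boldsymbol{\gamma}$ and invoking Lemma \ref{finitetime}), hence bounded; the maximum of these per-$n$ bounds together with the uniform large-$n$ bound then gives the result.

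The main obstacle is the Phase~II version of Lemma \ref{lolna}, since that lemma is stated for nominal dynamics. The extension goes through by the same Burkholder--Gundy--Davis plus Gronwall argument, with the uniform rate bound $2\bar{c}/a_{1}$ supplying the Lipschitz constant in Gronwall and making the compensated Poisson quadratic variation scale like $1/n$. Two subsidiary points require care: the Phase~II control is applied from the random state $\boldsymbol{\mu}^{n}(t_{0})$, which is handled by conditioning on $\mathcal{F}_{t_{0}}$ and invoking the strong Markov property; and one must verify that the straight-line path from any point in $\mathcal{P}_{a_{1}/2}(\mathcal{X})$ to $\boldsymbol{m}^{\ast}$ stays uniformly away from the boundary of $\mathcal{P}(\mathcal{X})$, so that the controlled rates along Phase~II remain bounded by $2\bar{c}/a_{1}$ and the bound $M$ is truly uniform.
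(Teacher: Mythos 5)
Your proof is correct and follows the same essential strategy as the paper's: build a cyclic admissible control with rates bounded above and below (hence bounded cost per unit time) and a uniform positive probability per cycle of reaching $K$ by a straight-line approach to an interior point, which gives a geometric tail for the exit time and thus a uniform bound on $V_K^n$. The paper's implementation differs only in two minor respects: it merges your two phases into a single adaptive interval of length $\tau=\mathrm{diam}\,\mathcal{P}(\mathcal{X})$ that at its start either attempts the straight-line approach (if the current state lies in $\mathcal{P}_a(\mathcal{X})$) or runs the nominal rates to re-enter $\mathcal{P}_a(\mathcal{X})$, and it uses the tracking control $q_{xy}(\boldsymbol{m},t-i\tau)\,\mu_x^n(t-)/\mu_x(\boldsymbol{m},t-i\tau)$ (as in \eqref{qnfor} of the upper-bound proof), which makes the compensated drift match the deterministic line exactly so that the estimate \eqref{BGD} suffices without the Gronwall extension of Lemma~\ref{lolna} you sketch for your open-loop Phase~II control. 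Your explicit handling of the finitely many $n<n_0$ is a sensible clarification, since the $O(1/\sqrt n)$ probability bounds are only informative for large $n$.
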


\begin{proof}
Let $\tau =diameter(\mathcal{P}(\mathcal{X})).$ By Remark \ref{away}, there
exists $a>0$ such that $\boldsymbol{\nu }(\boldsymbol{m},\tau )\in 
\mathcal{P}_{2a}(\mathcal{X})$ regardless of the initial data $%
\boldsymbol{m}$. We can further assume that $\mathcal{P}_{a}(\mathcal{X%
})\cap K^{\circ }\neq \emptyset ,$ and in particular that there exists an
element $\tilde{\boldsymbol{m}}$ such that $B(\tilde{\boldsymbol{m}},a%
/2)\subset \mathcal{P}_{a}(\mathcal{X})\cap K^{\circ }$. 

Since $\tilde{\boldsymbol{m}}\in \mathcal{P}_{a}(\mathcal{X})$, the first part of Theorem %
\ref{lemma:uniFUont} implies that for every point $\boldsymbol{m}$ in $%
\mathcal{P}_{a}(\mathcal{X})$ we can find a control $\boldsymbol{q}_{\boldsymbol{m}}$
with the following properties: there is a unique $\boldsymbol{\mu }$ such
that $(\boldsymbol{\mu },\boldsymbol{q}_{\boldsymbol{m}})\in \mathcal{T}_{%
\boldsymbol{\boldsymbol{m}}}$; $\boldsymbol{\mu }$ is a constant speed
parametrization of the straight line that connects $\boldsymbol{m}$ to $%
\tilde{\boldsymbol{m}}$ in time $T_{\{\tilde{\boldsymbol{m}}\}}=\Vert 
\boldsymbol{m}-\tilde{\boldsymbol{m}}\Vert $; and the control $\boldsymbol{q}%
_{\boldsymbol{m}}$ satisfies 
\begin{equation*}
\gamma _{xy}\leq q_{\boldsymbol{m},xy}(t)\leq \frac{c_{1}}{a},
\end{equation*}%
for $t\in \lbrack 0,T_{\{\tilde{\boldsymbol{m}}\}}],(x,y)\in \mathcal{Z},$
where $c_{1}>0$ is a constant that does not depend on $a.$ For every $%
\boldsymbol{m},$ we let 
\begin{equation*}
q_{xy}(\boldsymbol{m},t)=%
\begin{cases}
q_{\boldsymbol{m},xy}(t) & t\leq \Vert \boldsymbol{m}-\tilde{\boldsymbol{m}}%
\Vert , \\ 
\gamma _{xy} & t>\Vert \boldsymbol{m}-\tilde{\boldsymbol{m}}\Vert ,%
\end{cases}
\end{equation*}%
denote the control that takes $\boldsymbol{m}$ to $\tilde{\boldsymbol{m}}$
in time $\Vert \boldsymbol{m}-\tilde{\boldsymbol{m}}\Vert ,$ in the sense
that it was described above, and after that time is equal to the original
rates.

For $i\in \mathbb{N}$ we define a control for the interval $i\tau \leq
t<(i+1)\tau $ as follows. Let $f(t-)$ denote the limit of $f(s)$ from the
left at time $t$, and recall that $\boldsymbol{\mu }(\boldsymbol{m},\cdot )$
is the straight line that connects $\boldsymbol{m}$ to $\tilde{\boldsymbol{m}%
}$ in time $T_{\{\tilde{\boldsymbol{m}}\}}$, where $\tilde{\boldsymbol{m}}$
is fixed and we explicitly indicate the dependence on $\boldsymbol{m}$. Then
set%
\begin{equation*}
q_{xy}^{n}(t)=%
\begin{cases}
q_{xy}(\boldsymbol{m},t-i\tau )\frac{\mu _{x}^{n}(t-)}{\mu _{x}(\boldsymbol{m%
},t-i\tau )}, & \text{if}\,\left( \sup_{s\in \left[ i\tau ,t\right] }\Vert 
\boldsymbol{\mu }(\boldsymbol{m},t)-\boldsymbol{\mu }^{n}(t)\Vert \leq \frac{%
a}{2}\right) \ \text{and }(\boldsymbol{\mu }^{n}(i\tau )=\boldsymbol{m}\in 
\mathcal{P}_{a}(\mathcal{X})) \\ 
\gamma _{xy}, & \text{otherwise}.
\end{cases}%
\end{equation*}%
The idea with these controls is that, within each time interval with length $%
\tau $, the control considers the starting point $\boldsymbol{m}$, and then
if $\boldsymbol{m}\in \mathcal{P}_{a}(\mathcal{X}) $, it attempts to force
the process to follow the straight line to $\tilde{\boldsymbol{m}}.$ If $%
m\notin\mathcal{P}_{a}(\mathcal{X})$ or the process goes close to the
boundary of the simplex $\mathcal{P}(\mathcal{X})\setminus \mathcal{P}_{\ast
}(\mathcal{X}),$ then we just use original rates to push the process inside $%
\mathcal{P}_{a}(\mathcal{X}).$ Since all controls used are bounded from
above and below, the total cost is a multiple of $\mathbb{E}[T^{n}]$. Thus
we need only show this expected exit time is uniformly bounded.

By using (\ref{BGD}), we can find constant $c<\infty $ such that%
\begin{equation*}
\mathbb{P}\left( \sup_{t\in \lbrack i\tau ,(i+1)\tau ]}\Vert \boldsymbol{\mu 
}^{n}(t)-\boldsymbol{\mu }(\boldsymbol{m},t)\Vert \geq \frac{a}{2}\,\Bigg|\,%
\boldsymbol{\mu }^{n}(i\tau )=\boldsymbol{m}\in \mathcal{P}_{a}(\mathcal{X}%
)\right) \leq c\frac{2}{\sqrt{n}a},  
\end{equation*}%
from which we get 
\begin{equation*}
\begin{split}
\!\!\!\mathbb{P}(T^{n}>(i+1)\tau |\boldsymbol{\mu }^{n}(i\tau )\in \mathcal{P}_{a}(X))\!
\leq\!\!\! \inf_{\boldsymbol{m}\in \mathcal{P}_{a}(\mathcal{X})}\!\!\!\mathbb{P}\!\left(\!
\sup_{t\in \lbrack i\tau ,(i+1)\tau ]}\!\!\Vert \boldsymbol{\mu }^{n}(t)-%
\boldsymbol{\mu }(\boldsymbol{m},t)\Vert \geq \frac{a}{2}\,\Bigg|\,%
\boldsymbol{\mu }^{n}(i\tau )=\boldsymbol{m}\!\right)\! \! \leq\! c\frac{2}{\sqrt{n}a}.
\end{split}
\end{equation*}%
By Lemma \ref{lolna}, we have that for some $c^{\prime }<\infty $%
\begin{equation*}
\mathbb{P}\left( \sup_{t\in \lbrack i\tau ,(i+1)\tau ]}\Vert \boldsymbol{\mu 
}^{n}(t)-\boldsymbol{\nu }(\boldsymbol{m},t)\Vert \geq \frac{a}{2}\,\Bigg|\,%
\boldsymbol{\mu }^{n}(i\tau )=\boldsymbol{m}\notin \mathcal{P}_{a}(\mathcal{X%
})\right) \leq c^{\prime }\frac{2}{a\sqrt{n}},  
\end{equation*}%
which implies that 
\begin{equation*}
\begin{split}
& \mathbb{P}\!\left(\!\! \boldsymbol{\mu }^{n}((i+1)\tau \!)\notin \!\mathcal{P}_{a}(%
\mathcal{X})\bigg|\boldsymbol{\mu }^{n}(i\tau )\notin \!\mathcal{P}_{a}(%
\mathcal{X})\!\!\right) {\leq} \!\!\!\inf_{\boldsymbol{m}\notin \mathcal{P}_{a}(\mathcal{X})}\!\!\mathbb{P}
\!\left(\! \sup_{t\in \lbrack i\tau ,(i+1)\tau ]}\!\!\!\Vert \boldsymbol{\mu }^{n}(t)-%
\boldsymbol{\nu }(\boldsymbol{m},t)\Vert {\geq} \frac{a}{2}\,\Bigg|\,%
\!\boldsymbol{\mu }^{n}(i\tau ){=}\boldsymbol{m}\right) {\leq} \frac{2c^{\prime }}{%
a\sqrt{n}}.
\end{split}
\end{equation*}%
Thus the probability to escape in the next $2\tau $ units of time has a
positive lower bound that is independent of $n$ and the starting position.
This implies the uniform upper bound on the mean escape time.
\end{proof}

Now we proceed with the proof of the upper bound.

\begin{proof}[Proof of upper bound]
We will initially assume that $\boldsymbol{m}$ is in $\mathcal{P}_{a}(%
\mathcal{X}),$ for some $a>0.$ Recall that $V_{K}(\boldsymbol{m})<\infty $.
Let $\epsilon >0$. By the definition of $V_{K}(\boldsymbol{m}),$ we can find
a pair $(\boldsymbol{\mu },\boldsymbol{q})\in \mathcal{T}_{\boldsymbol{m}%
} $ and a $T\in \lbrack 0,\infty ],$ such that 
\begin{equation*}
\int_{0}^{T}\left( \sum_{(x,y)\in \mathcal{Z}}\mu _{x}(t)F_{xy}\left(
q_{xy}(t)\right) +R(\boldsymbol{\mu }(t))\right) dt\leq V_{K}(\boldsymbol{m}%
)+\epsilon .
\end{equation*}%
Since we assumed that $R$ is bounded from below by a positive constant for
every compact subset of $K^{c}$, we can furthermore find a $\delta $ such
that for finite time $T^{\delta }\in \lbrack 0,\infty )$ we have%
\begin{equation*}
\int_{0}^{T^{\delta }}\left( \sum_{(x,y)\in \mathcal{Z}}\mu
_{x}(t)F_{xy}\left( q_{xy}(t)\right) +R(\boldsymbol{\mu }(t))\right) dt\leq
V_{K}(\boldsymbol{m})+\epsilon ,
\end{equation*}%
and $d(\boldsymbol{\mu }(T^{\delta }),K)\leq \delta $. By the second part of
Theorem \ref{lemma:uniFUont}, we can extend the path so it can reach a point 
$\tilde{\boldsymbol{m}}$ of $K,$ with extra cost less than $\epsilon .$
Since $K=\overline{(K^{\circ })},$ by a second application of Theorem \ref%
{lemma:uniFUont}, we can assume that $\tilde{\boldsymbol{m}}$ is an internal
point of $K,$ by again adding an extra cost less than $\epsilon .$

Let $r>0$ be such that $B(\tilde{\boldsymbol{m}},r)\subset K^{\circ }.$ From
Lemma \ref{lemma:awayfromboundary}, without any loss of generality, we can
assume that there exist $a_{1},a_{2},a_{3}\in (0,\infty )$ such that

\begin{equation}\label{the qs}
a_{1}\leq \inf_{(x,y)\in \mathcal{Z},t\in \lbrack 0,S]}q_{xy}(t)\leq
\sup_{(x,y)\in \mathcal{Z},t\in \lbrack 0,S]}q_{xy}(t)\leq a_{2}, 
\inf_{x\in\mathcal{X}, t\in[0,S]}\mu_{x}(t)>a_{3},\hspace{4pt}\|\boldsymbol{%
\mu}(T)-\tilde{\boldsymbol{m}}\|<\frac{r}{2},
\end{equation}
where the $S$ used above is the one obtained by starting with $T^{\delta }$
and adding segments as just described. Finally, we can assume the existence of a $r_{1}>0$
such that for every point $\bar{\boldsymbol{m}}$ in $B(%
\boldsymbol{m},r_{1}),$ we can find a path like the one described above, by
connecting $\bar{\boldsymbol{m}}$ with a straight line to $\boldsymbol{m}.$
Of course this could generate $a_{1},a_{2},a_{3},S$ different from the initial ones, though universal for all  $\bar{\boldsymbol{m}}$ in $B(%
\boldsymbol{m},r_{1}),$  (see Theorem \ref{lemma:uniFUont}
for details).

Now let $\boldsymbol{m}^{n}$ be a sequence that converges to $\boldsymbol{m}%
. $ For big enough $n,$ we can assume that $\boldsymbol{m}^{n}\in B(%
\boldsymbol{m},r_{1}).$ By the continuity of $F$ on compact subsets of $%
(0,\infty ),$ we can find $r_{2}>0$ such that if $\boldsymbol{m}_{1},%
\boldsymbol{m}_{2}\in \mathcal{P}_{{\frac{a_{3}}{2}}}(\mathcal{X})$ and $%
\Vert \boldsymbol{m}_{1}-\boldsymbol{m}_{2}\Vert \leq r_{2},$ then for every 
$\boldsymbol{q}$ that satisfies (\ref{the qs}), we have

\begin{equation}
\sum_{(x,y)\in \mathcal{Z}}\left\vert
m_{1,x}F_{xy}(q_{xy})-m_{2,x}F_{xy}\left( q_{xy}\frac{m_{1,x}}{m_{2,x}}%
\right) \right\vert \leq \frac{\epsilon }{S}.
\label{close estimate}
\end{equation}

Now for every $n\in \mathbb{N},$ we define the following control for the
time interval $[0,S],$

\begin{equation}
q_{xy}^{n}(t)=%
\begin{cases}
q_{xy}(t)\frac{\mu _{x}^{n}(t-)}{\mu _{x}(t)}, & \text{if}\,\sup_{s\in
\lbrack 0,t]}\Vert \boldsymbol{\mu }(t)-\boldsymbol{\mu }^{n}(t)\Vert \leq
r_{2} \\ 
\gamma _{xy}, & \text{otherwise}.%
\end{cases}
\label{qnfor}
\end{equation}%
Note that either $\boldsymbol{\mu }^{n}$ enters $K$ by time $S$, or the
control has switch to $\gamma _{xy}$ before $S$. For every $n,$ we define an
auxiliary stopping time $S^{n}=\inf \{t\in \lbrack 0,S]:\Vert 
\boldsymbol{\mu }^{n}(t)-\boldsymbol{\mu }(t)\Vert >r_{2}\},$ and also we
define $R_{max}=\sup_{\boldsymbol{m}\in \mathcal{P}(\mathcal{X})}R(%
\boldsymbol{m})$. We can get an estimate of the cost accumulated up to time $%
S,$ for the pair $(\boldsymbol{\mu }^{n},\boldsymbol{q}^{n})\in \mathcal{T}%
_{\boldsymbol{m}^{n}}^{n}$. Specifically,%
\begin{equation*}
\begin{split}
& \mathbb{E}\left[ \int_{0}^{S}\left( \sum_{(x,y)\in \mathcal{Z}}\mu
_{x}^{n}(t)F_{xy}\left( q_{xy}^{n}(t)\right) +R(\boldsymbol{\mu }%
^{n}(t))\right) dt\right] \\
& \leq \mathbb{E}\left[ \int_{0}^{S}\left( \sum_{(x,y)\in \mathcal{Z}}\mu
_{x}^{n}(t)F_{xy}\left( q_{xy}^{n}(t)\right) +R(\boldsymbol{\mu }%
^{n}(t))\right) dt\cdot 1_{\left\{ \sup_{t\in \lbrack 0,S]}\Vert 
\boldsymbol{\mu }(t)-\boldsymbol{\mu }^{n}(t)\Vert \leq r_{2}\right\} }%
\right] \\
& +\mathbb{P}\left( \sup_{t\in \lbrack 0,S]}\Vert \boldsymbol{\mu }^{n}(t)-%
\boldsymbol{\mu }(t)\Vert >r_{2}\right) \times \\
& \left( \mathbb{E}\left[ \int_{0}^{S^{n}}\left( \sum_{(x,y)\in 
\mathcal{Z}}\mu _{x}^{n}(t)F_{xy}\left( q_{xy}^{n}(t)\right) +R(\boldsymbol{%
\mu }^{n}(t))\right) dt\Bigg |\sup_{t\in \lbrack 0,S]}\Vert \boldsymbol{\mu }%
^{n}(t)-\boldsymbol{\mu }(t)\Vert >r_{2}\right] +SR_{max}\right)
\end{split}%
\end{equation*}
Now by (\ref{qnfor}) the last quantity is equal to 
\begin{equation*}
\begin{split}
&\mathbb{E}\left[ \int_{0}^{S}\left( \sum_{(x,y)\in \mathcal{Z}}\mu
_{x}^{n}(t)F_{xy}\left( q_{xy}(t)\frac{\mu _{x}^{n}(t-)}{\mu _{x}(t)}\right)
+R(\boldsymbol{\mu }^{n}(t))\right) dt\cdot 1_{\left\{ \sup_{t\in \lbrack
0,S]}\Vert \boldsymbol{\mu }(t)-\boldsymbol{\mu }^{n}(t)\Vert \leq
r_{2}\right\} }\right] \\
& +\mathbb{P}\left( \sup_{t\in \lbrack 0,S]}\Vert \boldsymbol{\mu }^{n}(t)-%
\boldsymbol{\mu }(t)\Vert >r_{2}\right) \times \\
& \left(\!\! \mathbb{E}\!\left[\! \int_{0}^{S^{n}}\left(
\sum_{(x,y)\in \mathcal{Z}}\mu _{x}^{n}(t)F_{xy}\left( q_{xy}(t)\frac{\mu
_{x}^{n}(t-)}{\mu _{x}(t)}\right) +R(\boldsymbol{\mu }^{n}(t))\right) dt%
\Bigg |\sup_{t\in \lbrack 0,S]}\Vert \boldsymbol{\mu }(t)-\boldsymbol{\mu }%
^{n}(t)\Vert >r_{2}\right] +SR_{max}\right).
\end{split}%
\end{equation*}

Then using (\ref{close estimate}) with $m_{1,x}=\mu _{x}(t),m_{2,x}=\mu
_{x}^{n}(t-),$ for big enough $n$ we can bound 
\begin{equation*}
\mathbb{E}\left[ \int_{0}^{T}\left( \sum_{(x,y)\in \mathcal{Z}}\mu
_{x}^{n}(t)F_{xy}\left( q_{xy}^{n}(t)\right) +R(\boldsymbol{\mu }%
^{n}(t))\right) dt\right]
\end{equation*}%
above by

\begin{equation*}
V_{K}(\boldsymbol{m})+2\epsilon +\mathbb{P}\left( \sup_{t\in \lbrack
0,S]}\Vert \boldsymbol{\mu }^{n}(t)-\boldsymbol{\mu }(t)\Vert >r_{2}\right)
(V_{K}(\boldsymbol{m})+SR_{\max }+2\epsilon ).
\end{equation*}%
By using (\ref{BGD}), the probability that there was no exit in the time
interval $[0,S]$ is

\begin{equation*}
\mathbb{P}(S^{n}\geq{S})\leq \mathbb{P}\left(\sup_{t\in[0,S] }\|\boldsymbol{%
\mu}^{n}(t)-\boldsymbol{\mu}(t)\|> r_{2}\right)\leq c\frac{1}{\sqrt{n}r_{2}}.
\end{equation*}

Letting $V_{\max }$ be the upper bound identified in Lemma \ref%
{Vuniformlybounded} for the given $a>0$, the total cost satisfies 
\begin{equation*}
\begin{split}
&V_{K}^{n}(\boldsymbol{m}^{n}) \leq \mathbb{E}\left[ \int_{0}^{S}\left(
\sum_{(x,y)\in \mathcal{Z}}\mu _{x}^{n}(t)F_{xy}\left( q_{xy}^{n}(t)\right)
+R(\boldsymbol{\mu }^{n}(t))\right) dt+V(\boldsymbol{\mu }^{n}(S\wedge
S^{n}))\right] \\
& \leq V_{K}(\boldsymbol{m})+2\epsilon +\mathbb{P}\left( \sup_{t\in \lbrack
0,S]}\Vert \boldsymbol{\mu }^{n}(t)-\boldsymbol{\mu }(t)\Vert >r_{2}\right)
(V_{K}(\boldsymbol{m})+SR_{\max }+2\epsilon )+\mathbb{P}(S^{n}\geq {S}%
)V_{max} \\
& \leq V_{K}(\boldsymbol{m})+2\epsilon +2(SR_{\max }+V_{\max }+2\epsilon )\frac{c%
}{\sqrt{n}r_{2}}.
\end{split}%
\end{equation*}
By sending $n$ to infinity we get the upper bound if $\boldsymbol{m}\in 
\mathcal{P}_{a}(\mathcal{X})$ for some $a>0$. Next let $\boldsymbol{m}\in 
\mathcal{P}(\mathcal{X})\setminus \mathcal{P}_{\ast }(\mathcal{X})$. Let $%
t_{0}\leq \epsilon $ be such that $V_{K}(\boldsymbol{\nu }(\boldsymbol{m}%
,t_{0}))\leq V_{K}(\boldsymbol{m})+\epsilon , $ where $\boldsymbol{\nu }(%
\boldsymbol{m},t)$ is the solution to the original equation after time $t.$
We can find a $r>0$ such that for every $\tilde{\boldsymbol{m}}\in B(%
\boldsymbol{\nu }(\boldsymbol{m},t_{0}),r),$ $V_{K}(\tilde{\boldsymbol{m}}%
)\leq V_{K}(\boldsymbol{m})+2\epsilon .$ If $q^{n}(\bar{\boldsymbol{m}},t)$
is an $\epsilon $-optimal control that corresponds to each initial condition 
$\bar{\boldsymbol{m}}$, we define the control 
\begin{equation*}
q_{xy}^{n}(t)=%
\begin{cases}
\gamma _{xy}, & t\leq t_{0}, \\ 
q_{xy}^{n}(\boldsymbol{\mu }^{n}(t_{0}),t-t_{0}), & t>t_{0},%
\end{cases}%
\end{equation*}%
which gives 
\begin{equation*}
\begin{split}
& V_{K}^{n}(\boldsymbol{m}^{n})\leq \mathbb{E}\left[ \int_{0}^{T^{n}}\left(
\sum_{(x,y)\in \mathcal{Z}}\mu _{x}^{n}(s)F_{xy}\left( q_{xy}^{n}(s)\right)
+R(\boldsymbol{\mu }^{n}(s))\right) dt\right] \\
& \leq \mathbb{E}\left[ \int_{0}^{t_{0}}\left( \sum_{(x,y)\in \mathcal{Z}%
}\mu _{x}^{n}(s)F_{xy}\left( \gamma _{xy}^{n}(s)\right) +R(\boldsymbol{\mu }%
^{n}(s))\right) dt\right] \\
& +\mathbb{E}\left[ \int_{t_{0}}^{T^{n}}\left( \sum_{(x,y)\in \mathcal{Z}%
}\mu _{x}^{n}(s)F_{xy}\left( q_{xy}^{n}(\boldsymbol{\mu }%
^{n}(t_{0}),s-t_{0})\right) +R(\boldsymbol{\mu }^{n}(s))\right) dt\right]
\leq t_{0}R_{\max }+\mathbb{E}\left[ V(\boldsymbol{\mu }^{n}(t_{0}))\right]
\\
& \overset{Lemma\,\ref{Vuniformlybounded}}{\leq }\epsilon R_{\max }+P\left( 
\boldsymbol{\mu }^{n}(t_{0})\in B(\boldsymbol{\nu }(\boldsymbol{m}%
,t_{0}),r)\right) (V_{K}(\boldsymbol{m})+2\epsilon )+P\left( \boldsymbol{\mu 
}^{n}(t_{0})\notin B(\boldsymbol{\nu }(\boldsymbol{m},t_{0}),r)\right)
V_{\max } \\
& \leq V_{K}(\boldsymbol{m})+(2+R_{\max })\epsilon +P\left( \boldsymbol{\mu }%
^{n}(t_{0})\notin B(\boldsymbol{\nu }(\boldsymbol{m},t_{0}),r)\right)
V_{\max }.
\end{split}%
\end{equation*}%
Now by an application of Lemma \ref{lolna}, we get that the last term goes
to zero as $n$ goes to $\infty ,$ and since $\epsilon $ is arbitrary, we get
that $\limsup V_{K}^{n}(\boldsymbol{m}^{n})\leq V_{K}(\boldsymbol{m}).$

\end{proof}

\appendix
\section{Properties of Hamiltonians}\label{ap:a}

In this section we establish Lemma \ref{lemma:isaac} and Theorem \ref{minmax}. We start with the proof of Lemma \ref{lemma:isaac}.

\begin{proof}[Proof of Lemma \ref{lemma:isaac}] 
	To prove the exchange between supremum and infimum, we will apply a modification of  Sion's Theorem
	(Corollary 3.3 in \cite{Sion1958}), which states that if a continuous
	$G(u,q)$ is quasi-concave for every $u$ is some convex set $\mathcal U$ and
	quasi-convex for every $q$ in some convex set $\mathcal Q,$ and if one of the two
	sets is compact, then we can exchange the supremum with the infimum.
	We start by investigating the validity of these properties   when $ G= L_{xy}.$ Since $\ell $ is convex, for each $u \geq 0$, 
	\begin{equation*}
	L_{xy}(u,q)=q\xi +u\ell \left( \frac{q}{u}\right) -\gamma _{xy}C_{xy}\left( 
	\frac{u}{\gamma _{xy}}\right)
	\end{equation*}%
	is convex with respect to $q$.  It is easy to see that $u \mapsto L_{xy}(u,q)$ is not
	concave for each $q \geq 0$.  However we now show
	that under Assumption \ref{assumption},  for each $q \geq 0$,
	$u \mapsto L_{xy}(u,q)$ is quasi-concave, or equivalently, that  $\{u \geq 0:L_{xy}(u,q)\geq c\}$
	is convex for every  $c\in \mathbb{R}.$ 
	By differentiating with respect to $u$ we get 
	\begin{equation*}
	\partial _{u}L_{xy}(u,q)=-\frac{q}{u}+1-(C_{xy})^{\prime }\left( \frac{u}{%
		\gamma _{xy}}\right) .
	\end{equation*}%
	If we prove that for each $q$ the set of roots for $\partial _{u}L_{xy}(u,q)$
	is an interval or a point we are done, because a real function that changes
	monotonicity from increasing to decreasing at most once is quasi-concave.
	However $\partial _{u}L_{xy}(u,q)$ has the same roots as $Q(u) = u(C_{xy})^{\prime
	}\left( {\frac{u}{\gamma _{xy}}}\right) -u+q$. By part 1 of Assumption \ref{assumption},
	$Q(u)$ is increasing, which gives what is needed.
	
	Thus, we are almost in a situation where we can apply Sion's theorem, except
	that our  sets are $[0,\infty)$ and hence, non-compact.
	However, as we explain below, we can still apply this result
	by  using the fact that $\lim_{q\rightarrow
		\infty }L_{xy}(q,1)=\infty $. 
	If we prove that 
	\begin{equation*}
	\inf_{q\in \lbrack 0,\infty )}\sup_{u\in (0,\infty
		)}L_{xy}(u,q)=\lim_{r\rightarrow \infty }\inf_{q\in \lbrack 0,\infty
		)}\sup_{u\in \left[ r,\frac{1}{r}\right] }L_{xy}(u,q),
	\end{equation*}%
	then we are done, since by Corollary 3.3 in \cite{Sion1958}%
	\begin{eqnarray*}
		\inf_{q\in \lbrack 0,\infty )}\sup_{u\in (0,\infty )}L_{xy}(u,q)
		&=&\lim_{r\rightarrow \infty }\inf_{q\in \lbrack 0,\infty )}\sup_{u\in \left[
			r,\frac{1}{r}\right] }L_{xy}(u,q)= \\
		\lim_{r\rightarrow \infty }\sup_{u\in \left[ r,\frac{1}{r}\right]
		}\inf_{q\in \lbrack 0,\infty )}L_{xy}(u,q) &=&\sup_{u\in (0,\infty
			)}\inf_{q\in \lbrack 0,\infty )}L_{xy}(u,q).
	\end{eqnarray*}
	
	Let $M:= \inf_{q\in \lbrack 0,\infty )}\sup_{u\in (0,\infty
		)}L_{xy}(u,q) $. We will assume that $M<\infty $, and note that the case $%
	M=\infty $ is treated similarly. Since $\lim_{q\rightarrow \infty
	}L_{xy}(q,1)=\infty ,$ we can find $\tilde{q}$ such that $L_{xy}(q,1)>2M$
	for every $q\geq \tilde{q}.$ Now we have 
	\begin{equation*}
	\inf_{q\in \lbrack 0,\infty )}\sup_{u\in (0,\infty )}L_{xy}(u,q)=\inf_{q\in
		\lbrack 0,\tilde{q}]}\sup_{u\in \left( 0,\infty \right) }L_{xy}(u,q),
	\end{equation*}%
	and 
	\begin{equation*}
	\inf_{q\in \lbrack 0,\tilde{q}]}\sup_{u\in \left[ r,\frac{1}{r}\right]
	}L_{xy}(u,q)=\inf_{q\in \lbrack 0,\infty )}\sup_{u\in \left[ r,\frac{1}{r}%
		\right] }L_{xy}(u,q),
	\end{equation*}%
	which gives 
	\begin{eqnarray*}
		&&\inf_{q\in \lbrack 0,\infty )}\sup_{u\in (0,\infty
			)}L_{xy}(u,q)=\inf_{q\in \lbrack 0,\tilde{q}]}\sup_{u\in \left( 0,\infty
			\right) }L_{xy}(u,q)=\sup_{u\in \left( 0,\infty \right) }\inf_{q\in \lbrack
			0,\tilde{q}]}L_{xy}(u,q)= \\
		&&\lim_{r\rightarrow \infty }\sup_{u\in \left[ r,\frac{1}{r}\right]
		}\inf_{q\in \lbrack 0,\tilde{q}]}L_{xy}(u,q)=\lim_{r\rightarrow \infty
		}\inf_{q\in \lbrack 0,\tilde{q}]}\sup_{u\in \left[ r,\frac{1}{r}\right]
		}L_{xy}(u,q)=\lim_{r\rightarrow \infty }\inf_{q\in \lbrack 0,\infty
			)}\sup_{u\in \left[ r,\frac{1}{r}\right] }L_{xy}(u,q).
	\end{eqnarray*}
\end{proof} \\

\begin{proof}[Proof of Theorem \ref{minmax}]
	  Let $H^-$ (respectively, $H^+$) denote the left-hand side (respectively, right-hand side), of
		\eqref{hamiltonian}.
	Since each term in the sum that generates $H^{+}$ is bigger than the
	corresponding one in the sum of $H^{-},$ we get equality for all of them. By
	the theory of the Legendre transform we know that $\inf_{q\in \lbrack
		0,\infty )}\sup_{ u\in (0,\infty )}\left\{q\xi_{xy}+G_{xy}(u,q)\right\}$ is
	actually a concave function. Since we can exchange the order between
	the supremum and infimum, then $\sup_{u\in (0,\infty )}\inf_{q\in \lbrack 0,\infty
		)}\left\{q\xi_{xy}+G_{xy}(u,q)\right\}$ must be a concave function as well.
	By using the formula 
	\begin{equation*}
	\sup_{u\in (0,\infty )}\inf_{q\in \lbrack 0,\infty
		)}\left\{q\xi+G_{xy}(u,q)\right\} =\sum_{\left( x,y\right) \in \mathcal{Z}%
	}m_{x}\gamma _{xy}\left( C_{xy}\right) ^{\ast }\left( -\ell ^{\ast }\left(
	-\xi _{xy}\right) \right)
	\end{equation*}%
	we have that $\left( C_{xy}\right) ^{\ast }\left( -\ell ^{\ast }\left( \xi
	\right) \right) =\left( C_{xy}\right) ^{\ast }\left( 1-e^{\xi }\right)$ must
	also be concave. By differentiating with respect to $\xi $ we get, $e^{2\xi
	}\left( \left( C_{xy}\right) ^{\ast }\right) ^{\prime \prime }\left(
	1-e^{\xi }\right) -e^{\xi }\left( \left( C_{xy}\right) ^{\ast }\right)
	^{\prime }\left( 1-e^{\xi }\right) \leq 0,$ from which, by using the
	identity $(f^{\ast })^{\prime }=(f^{\prime })^{-1}$, we get 
	\begin{equation*}
	e^{2\xi }\left( \left( \left( C_{xy}\right) ^{\prime }\right) ^{-1}\right)
	^{\prime }\left( 1-e^{\xi }\right) -e^{\xi }\left( \left( C_{xy}\right)
	^{\prime }\right) ^{-1}\left( 1-e^{\xi }\right) \leq 0.
	\end{equation*}%
	By substituting $\tilde{u}=1-e^{\xi }$ we get 
	\begin{eqnarray*}
		&\left( 1-\tilde{u}\right) \left( \left( \left( C_{xy}\right) ^{\prime
		}\right) ^{-1}\right) ^{\prime }\left( \tilde{u}\right) -\left( \left(
		C_{xy}\right) ^{\prime }\right) ^{-1}\left( \tilde{u}\right) \leq 0,&\hspace{%
			4pt}\text{with}\hspace{4pt}\tilde{u}\leq 1 \\
		&\left( 1-\tilde{u}\right) \frac{1}{\left( C_{xy}\right) ^{\prime \prime
			}\left( \left( \left( C_{xy}\right) ^{\prime }\right) ^{-1}\left( \tilde{u}%
			\right) \right) }-\left( \left( C_{xy}\right) ^{\prime }\right) ^{-1}\left( 
		\tilde{u}\right) \leq 0,&\hspace{4pt}\text{with}\hspace{4pt}\tilde{u}\leq 1
		\\
		&\left( 1-\left( C_{xy}\right) ^{\prime }\left( r\right) \right) \frac{1}{%
			\left( C_{xy}\right) ^{\prime \prime }\left( r\right) }-r\leq 0,&\hspace{4pt}%
		\text{with}\hspace{4pt}\left( C_{xy}\right) ^{\prime }\left( r\right) \leq 1
		\\
		&r\left( C_{xy}\right) ^{\prime \prime }\left( r\right) +\left(
		C_{xy}\right) ^{\prime }\left( r\right) -1\geq 0,&\hspace{4pt}\text{with}%
		\hspace{4pt}\left( C_{xy}\right) ^{\prime }\left( r\right) \leq 1.
	\end{eqnarray*}
	
	Now the last inequality implies that either $\left( C_{xy}\right) ^{\prime
	}\left( u\right) \geq 1$ or that $u(C_{xy})^{\prime }\left( u\right) -u$ is
	locally increasing and even more that if $\left( C_{xy}\right) ^{\prime
	}\left( u_{0}\right) \geq 1$ for some $u_{0},$ then it must remain like that
	for every $u\geq u_{0}.$ If that was not the case then we can find $%
	u_{1}>u_{0}$ such that $u_{1}(C_{xy})^{\prime }\left( u_{1}\right) -u_{1}<%
	\hat{q}$ for some negative $\hat{q},$ while $u_{0}(C_{xy})^{\prime }\left(
	u_{0}\right) -u_{0}\geq0.$ By a suitable application of the mean value
	theorem we will get the existence of an $r$ that the last inequality fails.
	If we set $\tilde{u}_{xy}=\inf\{u: \left(C_{xy}\right)^{\prime}\left(
	u\right) \geq 1\},$ then the Assumption \ref{assumption} is recovered.
\end{proof}

\section{Properties of $F_{xy}$}\label{ap:b}

\begin{proof}[Proof of Lemma \protect\ref{lemma:fnproperties}]
(1) We have 
\begin{equation*}
F_{xy}(q)=\sup_{u\in (0,\infty )}\left\{ u\ell \left( \frac{q}{u}\right)
-\gamma _{xy}C_{xy}\left( \frac{u}{\gamma _{xy}}\right) \right\} \geq \gamma
_{xy}\ell \left( \frac{q}{\gamma _{xy}}\right) -\gamma _{xy}C_{xy}\left( 
\frac{\gamma _{xy}}{\gamma _{xy}}\right) \geq \gamma _{xy}\ell \left( \frac{q%
}{\gamma _{xy}}\right)\geq 0 .
\end{equation*}

(2) We have 
\begin{equation*}
\begin{split}
F_{xy}(\gamma _{xy})& =\sup_{u\in (0,\infty )}G_{xy}(u,\gamma
_{xy})=\sup_{u\in (0,\infty )}\left\{ u\ell \left( \frac{\gamma _{xy}}{u}%
\right) -\gamma _{xy}C_{xy}\left( \frac{u}{\gamma _{xy}}\right) \right\} \\
& =\sup_{u\in (0,\infty )}\left\{ \gamma _{xy}\log \gamma _{xy}-\gamma
_{xy}\log u-\gamma _{xy}+u-\gamma _{xy}C_{xy}\left( \frac{u}{\gamma _{xy}}%
\right) \right\} ,
\end{split}%
\end{equation*}%
and by applying part 2 of Lemma \ref{lemos} 
\begin{equation*}
\gamma _{xy}C_{xy}\left( \frac{u}{\gamma _{xy}}\right) \geq \gamma _{xy}\log
\gamma _{xy}-\gamma _{xy}\log u-\gamma _{xy}+u.
\end{equation*}%
Therefore $F_{xy}(\gamma _{xy})\leq 0.$ However, by part (1) of this lemma $%
F_{xy}(\gamma _{xy})\geq 0,$ and therefore the equality follows.

(3) By definition $F_{xy}(q)=\sup_{u\in (0,\infty )}G_{xy}(u,q).$ Let $a\in
(0,1)$ and $0\leq q_{1}<q_{2}<\infty $, and let $q=$ $aq_{1}+(1-a)q_{2}$.
Using the convexity of $G_{xy}(u,q)$ for fixed $u$ as a function of $q,$ we
have 
\begin{equation*}
\begin{split}
F_{xy}(aq_{1}+(1-a)q_{2})& =\sup_{u\in (0,\infty
)}G_{xy}(u,aq_{1}+(1-a)q_{2}) \\
& \leq \sup_{u\in (0,\infty )}\left\{
aG_{xy}(u,q_{1})+(1-a)G_{xy}(u,q_{2})\right\} \\
& \leq a\sup_{u\in (0,\infty )}G_{xy}(u,q_{1})+(1-a)\sup_{u\in (0,\infty
)}G_{xy}(u,q_{2}) \\
& \leq aF_{xy}(q_{1})+(1-a)F_{xy}(q_{2}).
\end{split}%
\end{equation*}
\end{proof}

For the proof of Lemma \ref{lemma:fnproperties2} that is given below, we
will use the following auxiliary lemma. Recall the definition of $G_{xy}$ in
(\ref{eqdef:Fxy,Gxy}).

\begin{lemma}
\label{lemma:updown} If $\{\boldsymbol{C}^{n}\}$ satisfies Assumption \ref%
{assumption:uh}, then the following hold for every $(x,y)\in \mathcal{Z}$.

\begin{enumerate}
\item There exists a positive real number $M,$ that does not depend on $%
(x,y) $, such that for the decreasing function $M_{xy}^{1}:(0,\infty
)\rightarrow \lbrack 0,\infty ),$ given by 
\begin{equation*}
M_{xy}^{1}(q)\doteq \min \left\{ \gamma _{xy}\left( \frac{\gamma _{xy}}{q}%
\right) ^{1/p},M\right\} ,
\end{equation*}%
we have that $G_{xy}(u,q)$ is increasing as a function of $u$ on the
interval $(0,M_{xy}^{1}(q)].$

\item There exists a decreasing function $M_{xy}^{2}:(0,\infty )\rightarrow
\lbrack 0,\infty ),$ with $M_{xy}^{2}(q)\geq M_{xy}^{1}(q),$ such that $%
G_{xy}(u,q)$ is decreasing as a function of $u$ on the interval $\left[
M_{xy}^{2}(q),\infty \right) .$
\end{enumerate}
\end{lemma}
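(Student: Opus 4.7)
The plan is to work directly with the $u$-derivative
\[
\partial_u G_{xy}(u,q) \;=\; 1 - \frac{q}{u} - C_{xy}'\!\left(\frac{u}{\gamma_{xy}}\right),
\]
obtained from a short calculation using $\ell'(s)=\log s$, and then read off both monotonicity intervals from the one-sided growth bounds in Assumption~\ref{assumption:uh}.

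For part~1, the first step is to use Assumption~\ref{assumption:uh}\eqref{enumerate:uhgrowthofh} together with the finiteness of $\mathcal{Z}$: for each $(x,y)$ pick $v_{0,xy}>0$ with $v^{p+1}C_{xy}'(v)\le -2$ for $v\le v_{0,xy}$, then set $v_0 := \min_{(x,y)\in\mathcal{Z}} v_{0,xy}$ and $M := v_0\cdot\min_{(x,y)\in\mathcal{Z}}\gamma_{xy}$. With this choice, any $u \in (0,M]$ satisfies $u/\gamma_{xy}\le v_0$, so $C_{xy}'(u/\gamma_{xy})\le -2\gamma_{xy}^{p+1}/u^{p+1}$. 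The second step is to observe that the constraint $u\le\gamma_{xy}(\gamma_{xy}/q)^{1/p}$ implicit in the definition of $M_{xy}^1(q)$ rearranges to $q/u\le\gamma_{xy}^{p+1}/u^{p+1}$. Plugging both bounds into the formula for $\partial_u G_{xy}(u,q)$ yields a lower bound of $1+\gamma_{xy}^{p+1}/u^{p+1}>0$ on $(0,M_{xy}^1(q)]$, giving strict monotonicity.

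For part~2, by Assumption~\ref{assumption}\eqref{enumerate:convex} the map $v\mapsto vC_{xy}'(v)-v$ is monotone increasing, and by the second clause of Assumption~\ref{assumption:uh} its limit at $\infty$ lies in $[0,\infty]$. Hence for each $\epsilon>0$ the set $\{v>0:vC_{xy}'(v)-v\ge -\epsilon\}$ is upward closed and non-empty; let $v_\epsilon^{xy}$ denote its infimum, which is decreasing in $\epsilon$. Setting
\[
M_{xy}^2(q)\;:=\;\max\bigl\{M_{xy}^1(q),\,\gamma_{xy}\,v_{q/\gamma_{xy}}^{xy}\bigr\},
\]
the condition $u\ge M_{xy}^2(q)$ rewrites (with $v=u/\gamma_{xy}$) as $vC_{xy}'(v)-v\ge -q/\gamma_{xy}$, i.e.\ $C_{xy}'(v)\ge 1-q/u$, which is exactly $\partial_u G_{xy}(u,q)\le 0$. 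Monotonicity of $M_{xy}^2$ in $q$ is immediate because both arguments of the $\max$ are decreasing in $q$ (the first by inspection, the second because $q\mapsto q/\gamma_{xy}$ is increasing and $\epsilon \mapsto v_\epsilon^{xy}$ is decreasing).

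The only subtle point is really in part~1, where $M$ must not depend on $(x,y)$; this is handled by the finiteness of $\mathcal{Z}$, which lets us take minima of $v_{0,xy}$ and $\gamma_{xy}$ over the index set. Part~2 is then essentially a formal consequence of the bound in Assumption~\ref{assumption:uh} part~2 paired with the monotonicity from Assumption~\ref{assumption}\eqref{enumerate:convex}; no further quantitative control is needed because the definition of $M_{xy}^2(q)$ is tailored exactly to the threshold value $\epsilon=q/\gamma_{xy}$.
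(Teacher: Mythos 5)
Your proof is correct and follows essentially the same route as the paper's: compute $\partial_u G_{xy}(u,q)=1-q/u-C_{xy}'(u/\gamma_{xy})$, then use Assumption~\ref{assumption:uh}(\ref{enumerate:uhgrowthofh}) to force this positive on $(0,M_{xy}^1(q)]$ and the liminf condition to force it nonpositive past $M_{xy}^2(q)$. In part 2 you are actually a bit more careful than the paper: you explicitly invoke the monotonicity of $v\mapsto vC_{xy}'(v)-v$ from Assumption~\ref{assumption}(\ref{enumerate:convex}) to show the superlevel set $\{v:vC_{xy}'(v)-v\ge-\epsilon\}$ is upward closed and nonempty, which cleanly justifies both the existence and the monotonicity in $q$ of the threshold $M_{xy}^2(q)$ (the paper asserts the decreasing threshold somewhat informally, and has typos in which clauses of the assumptions it cites). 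Your handling of the uniformity of $M$ over $(x,y)$ via finiteness of $\mathcal{Z}$ matches the intent of the paper's statement.
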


\begin{proof}
By taking the derivative with respect to $u$ in the definition (\ref%
{eqdef:Fxy,Gxy}) we get 
\begin{equation*}
-\frac{q}{u}-(C_{xy})^{\prime }\left( \frac{u}{\gamma _{xy}}\right) +1.
\label{asta}
\end{equation*}%

(1) By part 2 of Assumption \ref{assumption:uh} there exists $M\in (0,\infty
)$ such that if $u<M$, then 
\begin{equation*}
-\frac{q}{u}-(C_{xy})^{\prime }\left( \frac{u}{\gamma _{xy}}\right) +1\geq -%
\frac{q}{u}+\left( \frac{\gamma _{xy}}{u}\right) ^{p+1}+1,
\end{equation*}%
and by taking $u\leq \gamma _{xy}\left( \gamma _{xy}/q\right) ^{1/p}$ we get 
\begin{equation*}
-\frac{q}{u}+\left( \frac{\gamma _{xy}}{u}\right) ^{p+1}+1\geq -\frac{q}{u}+%
\frac{q}{u}+1>0.
\end{equation*}%
Therefore for 
\begin{equation*}
M_{xy}^{1}(q)=\min \left\{ \gamma _{xy}\left( \frac{\gamma _{xy}}{q}\right)
^{1/p},M\right\} ,
\end{equation*}%
we have $-\frac{q}{u}-(C_{xy})^{\prime }\left( \frac{u}{\gamma _{xy}}\right)
+1\geq 0$ on the interval $(0,M_{xy}^{1}(q)].$

(2) By applying part 3 of Assumption \ref{assumption:uh}, we get that there
exists decreasing $\tilde{M}_{xy}^{2}(q)<\infty ,$ such that if $u>\tilde{M}%
_{xy}^{2}(q)$ then 
\begin{equation}
\frac{u}{\gamma _{xy}}(C_{xy})^{\prime }\left( \frac{u}{\gamma _{xy}}\right)
-\frac{u}{\gamma _{xy}}\geq -\frac{q}{\gamma _{xy}}.  \label{ineq:1}
\end{equation}%
Then $M_{xy}^{2}(q)\doteq \max \{M_{xy}^{1}(q),\tilde{M}_{xy}^{2}(q)\},$ is
decreasing and bigger than $M_{xy}^{1}$, and using (\ref{ineq:1}) we get%
\begin{equation*}
-\frac{q}{u}-(C_{xy})^{\prime }\left( \frac{u}{\gamma _{xy}}\right) +1=-%
\frac{q}{u}-\frac{\gamma _{xy}}{u}\left( \frac{u}{\gamma _{xy}}%
(C_{xy})^{\prime }\left( \frac{u}{\gamma _{xy}}\right) -\frac{u}{\gamma _{xy}%
}\right) \leq 0
\end{equation*}%
on the interval $[M_{xy}^{2}(q),\infty ).$
\end{proof}

\begin{proof}[Proof of Lemma \protect\ref{lemma:fnproperties2}]
(1) Let $\epsilon >0,$ and $q\geq \epsilon $. By Lemma \ref{lemma:updown},
we have that $G_{xy}\left( u,q\right) ,$ as a function of $u,$ is increasing
on the interval $(0,M_{xy}^{1}(q)]$. Therefore for all $u\in
(0,M_{xy}^{1}(q)]$ we have 
\begin{equation*}
\begin{split}
u\ell \left( \frac{q}{u}\right) -\gamma _{xy}C_{xy}\left( \frac{u}{\gamma
_{xy}}\right) \leq & M_{xy}^{1}(q)\ell \left( \frac{q}{M_{xy}^{1}(q)}\right)
-\gamma _{xy}C_{xy}\left( \frac{M_{xy}^{1}(q)}{\gamma _{xy}}\right) \leq
M_{xy}^{1}(q)\ell \left( \frac{q}{M_{xy}^{1}(q)}\right) \\
& \leq q\log \left( \frac{q}{M_{xy}^{1}(q)}\right) +M_{xy}^{1}(q)\leq q\log
\left( \frac{q}{M_{xy}^{1}(q)}\right) +M_{xy}^{1}(\epsilon ) \\
& \leq q\log \left( q\right) -q\log \left( M_{xy}^{1}(q)\right)
+M_{xy}^{1}(\epsilon ) \\
& \overset{M_{xy}^{1}(\epsilon )\leq M_{xy}^{2}(\epsilon )}{\leq }q\log
\left( q\right) -q\log \left( M_{xy}^{1}(q)\right) +M_{xy}^{2}(\epsilon ).
\end{split}%
\end{equation*}%
By the second part of Lemma \ref{lemma:updown}, we have that $G_{xy}(u,q) $
is decreasing on the interval $(M_{xy}^{2}(\epsilon ),\infty )$. Therefore
for all $u\in (M_{xy}^{2}(\epsilon ),\infty )$%
\begin{equation*}
\begin{split}
u\ell \left( \frac{q}{u}\right) -\gamma _{xy}C_{xy}\left( \frac{u}{\gamma
_{xy}}\right) & \leq M_{xy}^{2}(\epsilon )\ell \left( \frac{q}{%
M_{xy}^{2}(\epsilon )}\right) -\gamma _{xy}C_{xy}\left( \frac{%
M_{xy}^{2}(\epsilon )}{\gamma _{xy}}\right) \leq M_{xy}^{2}(\epsilon )\ell
\left( \frac{q}{M_{xy}^{2}(\epsilon )}\right) \\
& \leq q\log \left( \frac{q}{M_{xy}^{2}(\epsilon )}\right)
+M_{xy}^{2}(\epsilon ) \\
& \overset{M_{xy}^{2}(q)\leq M_{xy}^{2}(\epsilon )}{\leq }q\log \left(
q\right) -q\log \left( M_{xy}^{2}(q)\right) +M_{xy}^{2}(\epsilon ) \\
& \overset{M_{xy}^{1}(q)\leq M_{xy}^{2}(q)}{\leq }q\log \left( q\right)
-q\log \left( M_{xy}^{1}(q)\right) +M_{xy}^{2}(\epsilon ).
\end{split}%
.
\end{equation*}%
Finally for the interval $[M_{xy}^{1}(q),M_{xy}^{2}(\epsilon )]$ we have%
\begin{equation*}
\begin{split}
u\ell \left( \frac{q}{u}\right) -\gamma _{xy}C_{xy}\left( \frac{u}{\gamma
_{xy}}\right) & \leq u\ell \left( \frac{q}{u}\right) =q\log q-q\log u-q+u \\
& \leq q\log q-q\log (M_{xy}^{1}(q))+M_{xy}^{2}(\epsilon ).
\end{split}%
\end{equation*}

Now if we recall the definition of $M_{xy}^{1}$ given in Lemma \ref%
{lemma:updown} and set $\bar{M}(q)\doteq \max \{M_{xy}^{2}(q):(x,y)\in 
\mathcal{Z}\},$ then 
\begin{equation*}
G_{xy}(u,q)\leq q\log \frac{q}{\min \left\{ \gamma _{xy}\left( \frac{\gamma
_{xy}}{q}\right) ^{1/p},M\right\} }+\bar{M}(\epsilon ),
\end{equation*}%
and by taking supremum over $u$ we end up with $F_{xy}(q)$ satisfying the
same bound.

(2) This is straightforward since $F_{xy}$ is finite on the interval $%
(0,\infty ),$ and convex.
\end{proof}
\section{ Tightness functionals}

\label{appendix:tightness}

\begin{proof}[Proof of Lemma \protect\ref{lemma:tightnessupmu}]
Let $c_{2}>0$ and $\{(\boldsymbol{\mu }^{n},T^{n})\}$ be a deterministic
sequence in $S$ with $\boldsymbol{\mu }^{n}$ absolutely continuous such that 
\begin{equation*}
\int_{0}^{T^{\boldsymbol{n}}}\ell \left( |\dot{\boldsymbol{\mu }}%
^{n}(t)|\right) dt+c_{1}T^{\boldsymbol{n}}\leq c_{2}
\end{equation*}%
and $|\dot{\boldsymbol{\mu }}^{n}(t)|=0$ for $t>T^{\boldsymbol{n}}$. We need
to show that $H$ has level sets with compact closure. Since all elements are
positive, we have that $T^{\boldsymbol{n}}\leq c_{2}/c_{1}$. Let $%
\boldsymbol{\bar{\mu}}^{n}$ denote the restriction of $\boldsymbol{\mu }^{n}$
to $[0,c_{2}/c_{1}]$. If we prove that $\boldsymbol{\bar{\mu}}^{n}$
converges along some subsequence then we are done. Using the inequality $%
ab\leq e^{ca}+\ell (b)/c,$ which is valid for $a,b\geq 0,$ and $c\geq 1,$ we
have that%
\begin{equation*}
|\boldsymbol{\mu }^{n}(t)-\boldsymbol{\mu }^{n}(s)|\leq \int_{t}^{s}|\dot{%
\boldsymbol{\mu }}^{n}(r)|dr\leq (t-s)e^{c}+\frac{c_{2}}{c}.
\end{equation*}%
This shows that $\left\{ \boldsymbol{\bar{\mu}}^{n}\right\} $ are
equicontinuous. Since $\boldsymbol{\bar{\mu}}^{n}(t)$ takes values in the
compact set $\mathcal{P}(\mathcal{X})$, by the Arzela-Ascoli theorem there
is a convergent subsequence.
\end{proof}

\bibliographystyle{plain}

\end{document}